\newcommand{\TT}{\mathcal{T}}
\newcommand{\YY}{\mathcal{Y}}
\newcommand{\FF}{\mathcal{F}}
\newcommand{\OO}{\mathcal{O}}
\newcommand{\HH}{\mathcal{H}}
\newcommand{\HHp}{\mathcal{H}^{\mathrm{main}}}
\newcommand{\HHx}{\mathcal{H}_I}
\newcommand{\HHy}{\mathcal{H}_{I\!I}}
\newcommand{\HHxp}{\mathcal{H}_{I}^{\mathrm{main}}}
\newcommand{\HHyp}{\mathcal{H}_{I\!I}^{\mathrm{main}}}
\newcommand{\BB}{\mathcal{B}}
\newcommand{\dd}{\mathbf{d}}
\newcommand{\NN}{\mathbb{N}}
\newcommand{\Gm}{\mathbb{G}_m}
\newcommand{\PP}{\mathbb{P}}
\newcommand{\CC}{\mathbb{C}}
\renewcommand{\gg}{\mathfrak{g}} 
\newcommand{\hh}{\mathfrak{h}} 
\newcommand{\gl}{\mathfrak{gl}} 
\renewcommand{\sl}{\mathfrak{sl}} 
\renewcommand{\sp}{\mathfrak{sp}} 
\newcommand{\uu}{\mathfrak{u}} 
\newcommand{\so}{\mathfrak{so}} 
\newcommand{\Irr}{\mathrm{Irr}}
\newcommand{\Gr}{\mathrm{Gr}}
\newcommand{\IG}{\mathrm{IGr}}
\newcommand{\OG}{\mathrm{OGr}}
\newcommand{\Hom}{\mathrm{Hom}}
\newcommand{\Ad}{\mathrm{Ad}}
\newcommand{\Mor}{\mathrm{Mor}}
\newcommand{\tr}{\mathrm{tr}}
\newcommand{\Ker}{\mathrm{Ker}}
\renewcommand{\Im}{\mathrm{Im}}
\newcommand{\Aut}{\mathrm{Aut}}
\newcommand{\Stab}{\mathrm{Stab}} 
\newcommand{\End}{\mathrm{End}}
\newcommand{\Hilb}{\mathrm{Hilb}}
\newcommand{\rk}{\mathrm{rk}}
\newcommand{\Spec}{\mathrm{Spec}}
\renewcommand{\labelitemi}{$\bullet$}
\newcommand{\leftexp}[2]{{\vphantom{#2}}^{#1}{#2}}
\theoremstyle{plain}
\newtheorem{theoreme}{Theorem}[section]
\newtheorem{lemme}[theoreme]{Lemma}
\newtheorem{proposition}[theoreme]{Proposition}
\newtheorem*{lemme*}{Key-Proposition}
\newtheorem*{proposition*}{Reduction Principle}
\newtheorem*{proposition**}{Proposition C}
\newtheorem*{theoreme**}{Theorem A}
\newtheorem*{theoreme***}{Theorem B}
\newtheorem{corollaire}[theoreme]{Corollary}
\theoremstyle{definition}
\newtheorem{conjecture}[theoreme]{Conjecture}
\theoremstyle{remark}
\newtheorem{remarque}[theoreme]{Remark}
\begin{document}

\title[Invariant Hilbert schemes and symplectic reductions]{Invariant Hilbert schemes and desingularizations of symplectic reductions for classical groups}

\author{Ronan TERPEREAU}

\begin{abstract}
Let $G \subset GL(V)$ be a reductive algebraic subgroup acting on the symplectic vector space $W=(V \oplus V^*)^{\oplus m}$, and let $\mu:\ W \rightarrow Lie(G)^*$ be the corresponding moment map. In this article, we use the theory of invariant Hilbert schemes to construct a canonical desingularization of the symplectic reduction $\mu^{-1}(0)/\!/G$ for classes of examples where $G=GL(V)$, $O(V)$, or $Sp(V)$. For these classes of examples, $\mu^{-1}(0)/\!/G$ is isomorphic to the closure of a nilpotent orbit in a simple Lie algebra, and we compare the Hilbert-Chow morphism with the (well-known) symplectic desingularizations of $\mu^{-1}(0)/\!/G$.
\end{abstract}

\maketitle

\setcounter{tocdepth}{1}

\tableofcontents

\section{Introduction and statement of the main results}  \label{introo}

First of all, let us recall briefly the definition of the \textit{invariant Hilbert scheme}, constructed by Alexeev and Brion (see \cite{AB,Br} for more details). We work over the field of complex numbers $\CC$. Let $G$ be a reductive algebraic group, and let $h:\ \Irr(G) \rightarrow \NN$ be a \textit{Hilbert function} which assigns to every irreducible representation of $G$ a nonnegative integer. If $X$ is an (possibly reducible) affine $G$-variety, then the invariant Hilbert scheme $\Hilb_{h}^{G}(X)$ is the moduli space that parametrizes the $G$-stable closed subschemes $Z$ of $X$ such that
\begin{equation*}
\CC[Z] \cong \bigoplus_{M \in \Irr(G)} M^{\oplus h(M)}
\end{equation*}
as a $G$-module. Let us now suppose that the categorical quotient 
$$X/\!/G:=\Spec(\CC[X]^G)$$ 
is an irreducible variety. If $h=h_0$ is the Hilbert function of the \textit{general fibers} of the quotient morphism $\nu:\ X \rightarrow X/\!/G$ (that is, the fibers over a nonempty open subset of $X/\!/G$), then there exists a projective morphism 
\begin{equation*}
\gamma:\ \Hilb_{h_0}^{G}(X) \rightarrow X/\!/G,
\end{equation*}
called the \textit{Hilbert-Chow morphism}, that sends a closed subscheme $Z \subset X$ to the point $Z/\!/G \subset X/\!/G$. 
The Hilbert-Chow morphism induces an isomorphism over the flat locus $U \subset X/\!/G$ of $\nu$. The \textit{main component} of $\Hilb_{h_0}^G(X)$ is the irreducible component defined by 
\begin{equation*}
\Hilb_{h_0}^G(X)^{\mathrm{main}}:=\overline{\gamma^{-1}(U)}.
\end{equation*}
Then the restriction $\gamma:\ \Hilb_{h_0}^G(X)^{\mathrm{main}} \rightarrow X/\!/G$ is a projective birational morphism, and thus $\gamma$ is a candidate for a canonical desingularization of $X/\!/G$. It is an open problem to determine whether this restriction is always a desingularization or not. Last, but not least, if $H$ is any algebraic subgroup of the $G$-equivariant automorphism group $\Aut^G(X)$, then $H$ acts on $X/\!/G$ and $\Hilb_{h_0}^{G}(X)$, and the quotient morphism $\nu:\ X \rightarrow X/\!/G$ and the Hilbert-Chow morphism $\gamma:\ \Hilb_{h_0}^{G}(X) \rightarrow X/\!/G$ are $H$-equivariant.

Let now $G$ be an algebraic group, let $\gg$ be the Lie algebra of $G$, and let $W$ be a \textit{symplectic $G$-module}, that is, a $G$-module equipped with a $G$-invariant non-degenerate skew-symmetric bilinear form. Then $W$ has a $G$-equivariant \textit{moment map} 
$$\mu_G:\ W  \rightarrow  \gg^*,$$ 
which is defined in the usual way. To simplify the notation, we will use $\mu$ instead of $\mu_G$. The map $\mu$ being $G$-equivariant, the \textrm{set-theoretic fiber} $\mu^{-1}(0)$ is a $G$-stable subvariety of $W$. From now on, we suppose that $G$ is reductive. The categorical quotient $\mu^{-1}(0)/\!/G$ is called the \textit{symplectic reduction} of $W$ by $G$ and plays a central role in the study of $W$. It is an important problem to determine whether $\mu^{-1}(0)/\!/G$ admits a \textit{symplectic desingularization} (which is a distinguished desingularization, see Section \ref{gen} for details); a candidate for such a desingularization is given by the Hilbert-Chow morphism $\gamma:\ \Hilb_{h_0}^{G}(\mu^{-1}(0))^{\mathrm{main}} \rightarrow \mu^{-1}(0)/\!/G$.

Let us take $V$ a finite dimensional vector space, and $m$ a nonnegative integer. In this paper, we are interested in the cases where \begin{equation*}  
W:=(V \oplus V^*)^{\oplus m},
\end{equation*} 
on which $GL(V)$ acts naturally, and $G=GL(V)$, $O(V)$, or $Sp(V)$. In this situation, we can find a classical algebraic subgroup $H \subset \Aut^G(\mu^{-1}(0))$, namely
\begin{enumerate}
\item $H=GL_{m}$ for $G=GL(V)$;
\item $H=Sp_{2m}$ for $G=O(V)$; and
\item $H=SO_{2m}$ for $G=Sp(V)$.
\end{enumerate}
In each case, we will see that the symplectic reduction $\mu^{-1}(0)/\!/G$ identifies with a nilpotent orbit closure in the Lie algebra $\hh$ of $H$, except in Case (3) for $m$ even and $m\leq \dim(V)$ where it is the union of two such orbit closures. In Case (3), if $\mu^{-1}(0)/\!/G=Y_1 \cup Y_2$ is reducible, then \textbf{we always consider only one component} to simplify the statements (that is, $\mu^{-1}(0)/\!/G$ should be replaced by $Y_i$ and $\mu^{-1}(0)$ by $\nu^{-1}(Y_i)$). The geometry of nilpotent orbits has been extensively studied by Fu, Kraft, Namikawa, Procesi...(\cite{KP2,KP4,KP3,FuB,Fu2,Fu3,Nam}). In particular, the normalizations of such closures are \textit{symplectic varieties} (as defined by Beauville in \cite{Beau}) whose symplectic desingularizations are the so-called \textit{Springer desingularizations}, obtained by collapsing the cotangent bundle over some flag varieties (see Section \ref{gen} for details).

In \cite{Terp,Terp1}, we studied the invariant Hilbert scheme for classical groups acting on classical representations. We obtained classes of examples where the Hilbert-Chow morphism is a desingularization of the categorical quotient, and further examples where it is not. In this article, we use the results of \cite{Terp1} to prove the following statements:

\begin{theoreme**} \emph{([Sections \ref{proofGLn} and \ref{pprrthe}])} 
With the above notation, let $G=GL(V)$, $O(V)$, or $Sp(V)$, then the Hilbert-Chow morphism $\gamma:\ \Hilb_{h_0}^{G}(\mu^{-1}(0))^{\mathrm{main}} \rightarrow \mu^{-1}(0)/\!/G$ is a symplectic desingularization (and the unique one) if and only if 
\begin{itemize}
 \item $G=GL(V)$, $\dim(V) \geq m-1$, and $m$ is even; or
 \item $G=O(V)$, and $\dim(V) \geq 2m-1$; or
 \item $G=Sp(V)$, $\dim(V)$ and $m$ are even, and $\dim(V) \geq 2m-2$.
\end{itemize}
\end{theoreme**}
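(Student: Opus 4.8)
The plan is to treat the statement as a biconditional comparing two distinguished desingularizations of $\mu^{-1}(0)/\!/G$: on one side the Hilbert-Chow morphism $\gamma$ on the main component, and on the other the Springer resolution of the relevant nilpotent orbit closure in $\hh=\mathrm{Lie}(H)$. Since the excerpt already records that $\mu^{-1}(0)/\!/G$ is (a component of) a nilpotent orbit closure $\overline{\OO}$ in a classical Lie algebra, and that the normalization of such a closure is a symplectic variety whose symplectic desingularizations are the Springer maps, I would first reduce the whole question to two separate problems: (a) when is $\Hilb_{h_0}^G(\mu^{-1}(0))^{\mathrm{main}}$ smooth with $\gamma$ birational projective onto $\mu^{-1}(0)/\!/G$ (i.e.\ a desingularization at all), and (b) among desingularizations, when is $\gamma$ symplectic and unique. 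The results of \cite{Terp1} should supply the precise numerical criterion for smoothness of the main component in each of the three cases $G=GL(V),O(V),Sp(V)$, and I expect these smoothness criteria to coincide exactly with the inequalities (and parity conditions) listed in the theorem.

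First I would make the identifications explicit and $H$-equivariant. Using the moment-map description of $\mu^{-1}(0)$ and the classical First Fundamental Theorem of invariant theory for each $G$, one identifies $\mu^{-1}(0)/\!/G$ with $\overline{\OO}$ (or a component $Y_i$) inside $\hh^*\cong\hh$: concretely the composite $W\to\hh$ sending a configuration to the matrix of $G$-invariant pairings lands in the nilpotent cone and has image the orbit closure of the expected Jordan type, determined by $\dim(V)$ and $m$. This is the step where the rank conditions $\dim(V)\geq m-1$, $\dim(V)\geq 2m-1$, $\dim(V)\geq 2m-2$ enter naturally, since they govern whether the generic invariant matrix attains maximal rank and hence which nilpotent orbit is hit; the parity conditions on $m$ and $\dim(V)$ govern symmetry/skew-symmetry of the Jordan blocks and thus whether $\overline{\OO}$ is irreducible and admits a symplectic resolution. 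I would cite the KP/Fu references for the classification of which nilpotent orbit closures in type $A$, $B/D$, $C$ admit symplectic (Springer) desingularizations and when that desingularization is unique.

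Next I would invoke \cite{Terp1} to compute the main component and the fibers of $\gamma$. The strategy is to show that $\Hilb_{h_0}^G(\mu^{-1}(0))^{\mathrm{main}}$ is isomorphic, $H$-equivariantly, to the total space of a Springer-type collapsing map $T^*(G/P)\to\overline{\OO}$ for an appropriate parabolic $P\subset H$, with $\gamma$ corresponding to the collapsing. Concretely, a point of the main component is a $G$-stable subscheme $Z\subset\mu^{-1}(0)$ whose associated invariant matrix is nilpotent, together with the extra scheme structure; the flag it determines is the kernel/image flag of that matrix, giving the morphism to $G/P$. Smoothness of the main component is then equivalent to the Springer map being an isomorphism onto its (smooth) source, which holds precisely under the stated numerical hypotheses; conversely, when the inequalities fail, the results of \cite{Terp1} exhibit either a singular main component or extra irreducible components, so $\gamma$ is not a desingularization, and when the parity fails $\overline{\OO}$ is of a type admitting no symplectic resolution or admitting a non-unique one. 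Establishing that $\gamma$ is genuinely symplectic reduces to checking that the Springer map pulls back the Kostant--Kirillov form to the canonical symplectic form on $T^*(G/P)$, which is standard.

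The main obstacle I anticipate is the precise matching of the invariant-Hilbert-scheme data with the cotangent-bundle-of-flag-variety picture, that is, proving that the main component of $\Hilb_{h_0}^G(\mu^{-1}(0))$ really is $T^*(H/P)$ rather than merely a desingularization dominating it, together with controlling the degenerate boundary cases $\dim(V)=m-1$, $2m-1$, $2m-2$ where the rank drops and several candidate orbits coincide. These equality cases are exactly where smoothness is most delicate, and I expect the bulk of the work (drawn from \cite{Terp1}) to be the explicit local computation of the tangent space to $\Hilb_{h_0}^G(\mu^{-1}(0))^{\mathrm{main}}$ at the distinguished fixed points, verifying its dimension equals $\dim\overline{\OO}$ exactly under the stated bounds and strictly exceeds it (forcing singularity or reducibility) otherwise. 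Uniqueness of the symplectic desingularization would then follow from the general theory of symplectic varieties together with the classification of Springer maps for the orbit closures in question, so the genuinely new content is the smoothness/dimension criterion and its sharp dependence on the displayed inequalities and parities.
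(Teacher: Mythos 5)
Your outline shares the paper's skeleton (identify $\mu^{-1}(0)/\!/G$ with a nilpotent orbit closure, use Fu's theorem that every symplectic desingularization is a Springer desingularization, feed in the results of \cite{Terp1}, get uniqueness from Fu--Namikawa), but the pivot of your argument is wrong. You reduce Theorem A to a \emph{smoothness} criterion for the main component, claiming that outside the stated inequalities the main component is singular or $\gamma$ fails to be a desingularization, to be detected by a tangent-space dimension count. This contradicts the paper's own Theorem B: for instance for $G=GL(V)$, $\dim(V)=2$, $m\geq 4$, the main component $\HHp$ is smooth --- it is $Bl_0(\Hom(\underline{V'}/T_2,T_1))$ --- so $\gamma$ \emph{is} a desingularization; it merely dominates the Springer one strictly, hence is not a cotangent bundle and so, by Theorem \ref{ttt}, not symplectic. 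Thus smoothness is not the dividing line: after Theorem \ref{ttt}, the question is whether $\HHp$ is $H$-equivariantly isomorphic to $\TT^*(H/P)$, and a tangent-space count cannot decide this, since in the delicate boundary cases $\HHp$ is smooth of the correct dimension $2N(m-N)$ everywhere. Relatedly, the inequalities $\dim(V)\geq m-1$, $2m-1$, $2m-2$ are not about ``which orbit the generic invariant matrix hits'': e.g.\ for $GL(V)$ with $n=3$, $m=6$, the quotient is $\overline{\OO_{[2^3]}}$, which does admit a unique Springer desingularization, yet $\gamma$ is not it.

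Two ingredients of the paper's proof are missing and are exactly what closes both directions. First, the reduction principle (Propositions \ref{reduction3} and \ref{reduction37}): the $H$-equivariant morphism $\rho$ to the Grassmannian(s) sends $\HHp$ onto the closed $H$-orbit, giving $\HHp\cong H\times^P\HH'^{\mathrm{main}}$ with $\HH'=\Hilb_{h_0}^G(W')$ for a \emph{linear} $G\times P$-module $W'\subset\mu^{-1}(0)$; this is what makes \cite{Terp1} (whose theorems concern Hilbert schemes of linear representations, not of null-fibers) applicable, and it drives the sharp ``only if'': if $\HHp\cong\TT^*(H/P)$, then the fiber $\HH'^{\mathrm{main}}$ must be the affine space $\Hom(V'/L,L)$ (resp.\ its analogue for $Sp(V)$), so the restricted Hilbert--Chow morphism $\gamma':\ \HH'^{\mathrm{main}}\rightarrow W'/\!/G$ is projective and birational onto a smooth affine variety, hence an isomorphism by Zariski's Main Theorem; this forces the quotient morphism $\nu':\ W'\rightarrow W'/\!/G$ to be flat, and the flatness criterion \cite[Corollary 4.12]{Terp1} yields precisely $\dim(V)\geq m-1$ (resp.\ $\dim(V)\geq 2m-2$). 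Second, when the orbit closure carries \emph{two} Springer desingularizations (for $GL(V)$ when $N<\frac{m}{2}$, for $Sp(V)$ when $m$ is odd), your appeal to the classification does not exclude that $\gamma$ coincides with one of them; the paper rules this out by a symmetry argument absent from your proposal: the $G\times H$-module isomorphism $W\cong W^*$ (resp.\ the action of $O(E)\backslash SO(E)$) preserves the main component but would exchange the two cotangent bundles $\TT_1^*$ and $\TT_2^*$ (resp.\ $\TT_I^*$ and $\TT_{I\!I}^*$), so $\HHp$ can be isomorphic to neither. Without these two steps, neither direction of the biconditional is established.
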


\begin{theoreme***} \emph{([Sections \ref{proofGLn} and \ref{pprrthe}])} 
With the above notation, let $G=GL(V)$, $O(V)$, or $Sp(V)$, then the Hilbert-Chow morphism $\gamma:\ \Hilb_{h_0}^{G}(\mu^{-1}(0))^{\mathrm{main}} \rightarrow \mu^{-1}(0)/\!/G$ is a desingularization that strictly dominates the symplectic desingularizations (when they exist) in the following cases:  
\begin{itemize}
 \item $G=GL(V)$ and either $\dim(V)=1$, $m \geq 3$ or $\dim(V)=2$, $m \geq 4$; or
 \item $G=O(V)$ and either $\dim(V)=1<m$ or $\dim(V)=2 \leq m$; or
 \item $G=Sp(V)$ and either $\dim(V)=2<m$ or $\dim(V)=4 \leq m$.
\end{itemize}
\end{theoreme***}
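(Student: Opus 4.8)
\emph{Proof strategy.} The plan is to treat the three families $G=GL(V)$, $O(V)$, $Sp(V)$ in parallel, separating the assertion that $\gamma$ is a desingularization from the strict-domination assertion. The essential input is \cite{Terp1}, which I would use to (i) identify $\mu^{-1}(0)/\!/G$ with an explicit nilpotent orbit closure $\overline{\OO}\subset\hh$, (ii) establish that $\Hilb_{h_0}^{G}(\mu^{-1}(0))^{\mathrm{main}}$ is smooth, and — crucially — (iii) describe the main component concretely as the total space of a homogeneous vector bundle over a flag-type variety $\mathcal{F}$, together with the fibres of $\gamma$. Point (ii) already gives that $\gamma$ is a desingularization, which is the full content of the statement in those subcases where no symplectic desingularization exists.

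Next I would determine, case by case, whether $\overline{\OO}$ admits a symplectic desingularization and, if so, identify it with the (generalized) Springer resolution $\pi:\ Z=T^*(H/P)\rightarrow\overline{\OO}$. Being crepant, $\pi$ is semismall, so its central fibre satisfies $\dim\pi^{-1}(0)=\tfrac{1}{2}\dim\overline{\OO}$. Several of the listed cases genuinely carry no symplectic desingularization: for instance, for $G=O(V)$ with $\dim(V)=1<m$ one has $\overline{\OO}\cong\CC^{2m}/\{\pm 1\}$, and since $-1\in Sp_{2m}$ is a symplectic reflection only for $m=1$, there is no symplectic desingularization once $m\geq 2$. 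In these subcases the domination claim is vacuous and nothing further is required; this is exactly what the parenthetical ``(when they exist)'' accounts for.

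The heart of the argument is the remaining subcases, where the Springer resolution $\pi$ does exist. Here I would first produce a morphism $\rho:\ \Hilb_{h_0}^{G}(\mu^{-1}(0))^{\mathrm{main}}\rightarrow Z$ with $\gamma=\pi\circ\rho$. The clean route is geometric: the flag-type variety $\mathcal{F}$ carrying the main component admits an $H$-equivariant projection onto $H/P$, which lifts to a partial collapsing map $\rho$ of the associated bundles, compatible with $\gamma$ and $\pi$; as a cross-check, since $\overline{\OO}$ has canonical Gorenstein (symplectic) singularities and $Z$ is a relative minimal model over $\overline{\OO}$, the birational map $\Hilb^{\mathrm{main}}\dashrightarrow Z$ is forced to extend to a morphism. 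Strictness is then obtained from a central-fibre dimension count: using the explicit description of \cite{Terp1}, $\gamma^{-1}(0)$ is (a bundle over) an incidence or partial-flag variety, and one verifies in each case the inequality $\dim\gamma^{-1}(0)>\tfrac{1}{2}\dim\overline{\OO}=\dim\pi^{-1}(0)$. Since $\rho$ maps $\gamma^{-1}(0)$ into $\pi^{-1}(0)$, this inequality forces $\rho$ to have a positive-dimensional fibre, hence $\rho$ is not an isomorphism and $\gamma$ strictly dominates $\pi$; equivalently, the inequality shows that $\gamma$ is not semismall and so cannot itself be a symplectic desingularization.

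The main obstacle is precisely this last computation: confirming $\dim\gamma^{-1}(0)>\tfrac{1}{2}\dim\overline{\OO}$ relies on the fine structure of the main component from \cite{Terp1} together with a case-by-case analysis of the partitions labelling the orbits $\overline{\OO}$, and it is this numerology that pins down exactly the four border cases $\dim(V)\in\{1,2\}$ for $GL(V)$ and $O(V)$ and $\dim(V)\in\{2,4\}$ for $Sp(V)$. A secondary, more technical point is to guarantee that the birational map $\Hilb^{\mathrm{main}}\dashrightarrow Z$ is an honest morphism; I expect the explicit bundle projection to settle this directly, relegating the minimal-model argument to a redundant verification.
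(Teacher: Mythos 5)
Your proposal reproduces the paper's overall skeleton (explicit identification of the main component $\Rightarrow$ smoothness $\Rightarrow$ desingularization; then a morphism to the Springer resolution plus an argument that it is not an isomorphism), but it has a genuine gap at the very first step: you attribute the description of $\Hilb_{h_0}^{G}(\mu^{-1}(0))^{\mathrm{main}}$ to \cite{Terp1}, whereas \cite{Terp1} only treats the invariant Hilbert scheme of the \emph{linear} module $W$ itself (i.e.\ $\Hom(V',V)\times\Hom(V,V')$, resp.\ $\Hom(E,V)$), not of the singular cone $\mu^{-1}(0)\subset W$. The bridge between the two is precisely the paper's reduction principle (Propositions \ref{reduction3} and \ref{reduction37}), its main technical contribution: using the morphism $\rho$ to Grassmannians (Propositions \ref{moorppgr} and \ref{moorppgr2}), one shows that the main component is mapped onto the closed $H$-orbit $A_0\cong H/P$ (Lemma \ref{versX0}), identifies the scheme-theoretic fiber with $\HH'^{\mathrm{main}}$ for a smaller module $W'$, and concludes $\HHp\cong H\times^P\HH'^{\mathrm{main}}$; only then is \cite{Terp1} applied, to $W'$, to obtain the explicit models (\ref{descHpGl}) and (\ref{eqm2}). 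Without this step, your items (ii) and (iii) --- smoothness and the bundle structure over a flag-type variety --- have no source.

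Granting that description, your mechanism for strictness genuinely differs from the paper's, and it works. The domination morphism exists for explicit reasons (the natural maps from $\Hom(\underline{V'}/T_2,T_1)$, or its blow-up at the zero section, to $\TT_1^*$ and $\TT_2^*$; resp.\ the blow-down $Bl_0(\TT^*)\rightarrow\TT^*$), and since the Springer fiber over $0$ is exactly the zero section $H/P$, of dimension $\tfrac12\dim\overline{\OO_{\dd}}$, while $\gamma^{-1}(0)$ is the whole flag variety (resp.\ the exceptional divisor), the inequality $\dim\gamma^{-1}(0)>\tfrac12\dim\overline{\OO_{\dd}}$ does hold in every listed case where a Springer resolution exists, forcing your morphism to contract something; combined with Theorem \ref{ttt} it also shows directly that $\gamma$ is not symplectic. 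The paper argues differently: in the two-Springer cases it uses a symmetry argument ($W\cong W^*$ for $GL(V)$; the $O(E)$-action for $Sp(V)$, which exchanges $\TT_I^*$ and $\TT_{I\!I}^*$ but preserves $\HHp$) to exclude that $\gamma$ is a Springer desingularization, deferring the domination itself to \cite{Terp} and \cite{Tanja2}, and in the unique-Springer cases it reads strict domination off from $\HHp\cong Bl_0(\TT^*)$. Your dimension count is more uniform and self-contained; the paper's symmetry argument yields the stronger fact that $\HHp$ is not $H$-equivariantly isomorphic to any Springer resolution. Finally, your minimal-model ``cross-check'' is false and should be deleted: a birational map from a resolution to a relative minimal model need not extend to a morphism --- the two Springer resolutions $\TT_1^*$ and $\TT_2^*$ of the same orbit closure differ by a Mukai flop --- so nothing is ``forced''; only the explicit collapsing construction carries that part of the argument.
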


If $G \subset GL(V)$ is any reductive algebraic subgroup, then it is generally a difficult problem to determine whether $\Hilb_{h_0}^{G}(\mu^{-1}(0))$ is irreducible, that is, equals its main component. In this direction, we obtain   

\begin{proposition**} \emph{([Propositions \ref{casSympn1} and \ref{wxcv}])} 
With the above notation, if $G=GL(V)$ and $m \geq 2\dim(V)$, then the invariant Hilbert scheme $\Hilb_{h_0}^{G}(\mu^{-1}(0))$ has at least two irreducible components (and exactly two when $\dim(V)=1$). On the other hand, if $G=O(V)$ or $Sp(V)$, and $m \geq \dim(V)=2$, then $\Hilb_{h_0}^{G}(\mu^{-1}(0))$ is irreducible. 
\end{proposition**}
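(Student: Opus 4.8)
The plan is to handle the two assertions by the same underlying principle---comparison of dimensions and of tangent spaces against the main component---but with opposite conclusions. Throughout write $n := \dim(V)$, let $\overline{\OO} \subset \hh$ be the nilpotent orbit closure with $\mu^{-1}(0)/\!/G \cong \overline{\OO}$, and set $d := \dim \overline{\OO}$. Since $\gamma$ restricts to a projective birational morphism $\Hilb_{h_0}^{G}(\mu^{-1}(0))^{\mathrm{main}} \to \overline{\OO}$ onto a normal variety, the main component has dimension $d$ and, being irreducible of dimension $d$ and dominant onto the positive-dimensional $\overline{\OO}$, every one of its fibres has dimension at most $d-1$; in particular $\dim\bigl(\gamma^{-1}(0) \cap \Hilb_{h_0}^{G}(\mu^{-1}(0))^{\mathrm{main}}\bigr) \leq d-1$. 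This last inequality is the lever for the reducibility statement.

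For $G = GL(V)$ with $m \geq 2n$ I would exhibit an irreducible family $\FF$ of $G$-stable closed subschemes of $\mu^{-1}(0)$, each with Hilbert function $h_0$ and each lying over the origin (so $\FF \subseteq \gamma^{-1}(0)$), such that $\dim \FF \geq d$. By the inequality above such a family cannot sit inside the $d$-dimensional main component, so it meets a second irreducible component and the invariant Hilbert scheme is reducible. To produce $\FF$ I would use the decomposition $W = V^{\oplus m} \oplus (V^{*})^{\oplus m}$ into its two Lagrangian halves: the generic fibre of $\nu$ degenerates, as its image in $\overline{\OO}$ tends to $0$, onto subschemes supported on the null-cone $\nu^{-1}(0)$, and the collapsing data---roughly a choice of subspaces of $\CC^{m}$ recording how the two halves meet, together with the residual action of $H$---is parametrized by a variety of dimension at least $d$. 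Carrying out this dimension count precisely, using the $G \times H$-module structure of $\CC[\mu^{-1}(0)]$ furnished by \cite{Terp1}, and checking that the members indeed have Hilbert function $h_0$, is the delicate part of this half; the hypothesis $m \geq 2n$ enters because it is exactly the range in which $\overline{\OO}$ is the closure of the orbit of type $(2^{n},1^{m-2n})$ and the null-cone is large enough to support an $\FF$ of the required dimension.

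When $n = 1$ the group is $G = \Gm$ and $\mu^{-1}(0) = \{\sum_{i} x_{i}y_{i} = 0\} \subset \CC^{2m}$ is a quadric cone, with the $x_{i}$ of weight $+1$ and the $y_{j}$ of weight $-1$; here $\overline{\OO}$ is the minimal nilpotent orbit closure in $\gl_{m}$ and $d = 2m-2$. Because the acting group is a one-dimensional torus, I expect to classify outright the $\Gm$-stable ideals $I \subset \CC[\mu^{-1}(0)]$ with $\CC[\mu^{-1}(0)]/I \cong \bigoplus_{k \in \mathbb{Z}} \CC_{k}$ (each character occurring once), reducing the problem to a finite combinatorial analysis of weight spaces modulo the relation $\sum_{i} x_{i}y_{i} = 0$. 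The involution exchanging the $x$- and $y$-coordinates is a $G$-equivariant symmetry of $\mu^{-1}(0)$, hence acts on the Hilbert scheme and on $\overline{\OO}$; the expected outcome of the classification is that, besides the main component, there is exactly one further component, so that the total number is precisely two.

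For the irreducibility statement---$G = O(V)$ or $Sp(V)$ with $m \geq \dim(V) = 2$---I would argue by smoothness and connectedness. The scaling $\Gm$-action on $W$ commutes with $G$ and contracts the cone $\mu^{-1}(0)$ to the origin, whence $\Hilb_{h_0}^{G}(\mu^{-1}(0))$ carries an induced $\Gm$-action all of whose points flow to the central fibre; this forces the Hilbert scheme to be connected and, moreover, reduces the verification of smoothness to the finitely many fixed points of a maximal torus of $G \times H \times \Gm$. At such a fixed point $Z$ the Zariski tangent space is $\bigl(\Hom_{R}(I_{Z}, R/I_{Z})\bigr)^{G}$ with $R = \CC[\mu^{-1}(0)]$, which for $\dim(V) = 2$---where $Sp(V) \cong SL_{2}$ and $O(V)$ are very small groups---I would compute explicitly from the $G \times H$-module decomposition of $\CC[\mu^{-1}(0)]$ and compare with $d$. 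The main obstacle is to show that this invariant-theoretic tangent dimension equals $d$ at every fixed point---that there are no obstructed first-order deformations---after which smoothness propagates from the fixed points to all of $\Hilb_{h_0}^{G}(\mu^{-1}(0))$ by the $\Gm$-action, and a smooth connected scheme is irreducible.
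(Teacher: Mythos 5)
Your skeleton for the $GL(V)$ half is sound as far as it goes: the main component has dimension $2n(m-n)$, its $\gamma$-fibres have dimension at most $2n(m-n)-1$, so an irreducible family of points of $\HH$ of dimension $\geq 2n(m-n)$ inside $\gamma^{-1}(0)$ would force a second component. (It is also close to what actually happens: the paper's extra component is the $H$-sweep of an explicit homogeneous ideal, and it is separated from $\HHp$ because it lies over the open $H$-orbit $A_n$ under the morphism $\rho$ of Proposition \ref{moorppgr}, while $\HHp$ lies over the closed orbit $A_0$ by Lemma \ref{versX0}.) But everything you label ``the delicate part'' is the entire mathematical content, and you do not do it: one must actually produce a $G$-stable closed subscheme of $\mu^{-1}(0)$ with Hilbert function $h_0(M)=\dim(M)$ lying outside the main component. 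The paper does this by reducing (Lemma \ref{fibrehil3}) to the non-emptiness of $\Hilb_{h_0}^{G}(\mu^{\prime\prime -1}(0))$ for the smaller symplectic module $W''$, and then verifying in Proposition \ref{pppfixe} that an explicit ideal $I_0$ has the right Hilbert function via the Kraft--Schwarz method: orthogonality of $R$ and $R^*$ modulo $I_0$, surjectivity of $\CC[R]^{U\times U'}\otimes\CC[R^*]^{U\times U'}\to(\CC[W'']/I_0)^{U\times U'}$, and a bijection between dominant weights and highest-weight monomials. Without such a construction your family $\FF$ is not known to be non-empty, let alone of the required dimension. The same criticism applies to the exact count for $n=1$: you ``expect'' a combinatorial classification but do not perform it, whereas the paper obtains it from the closed embedding $\gamma\times\rho'$ of $\HH$ into the incidence variety $\left\{(f,L)\in\overline{\OO_{[2,1^{m-2}]}}\times\PP(\hh^{\leq 1})\mid f\in L\right\}=C_1\cup C_2$, which rests on the explicit description of $\Hilb_{h_0}^{G}(W)$ from \cite{Terp1}.

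The irreducibility half is where your plan genuinely fails rather than being merely incomplete. For $G=Sp(V)$ and $m=n=2$, the honest scheme $\Hilb_{h_0}^{G}(\mu^{-1}(0))$ is connected but \emph{reducible}: by Proposition \ref{wxcv} it is the union of two smooth components $\HHx\cup\HHy$ crossing along the locus of homogeneous ideals. The statement of Proposition C is true only under the convention, fixed in the introduction, that when $\mu^{-1}(0)/\!/G=Y_I\cup Y_{I\!I}$ is reducible one replaces $\mu^{-1}(0)$ by $\nu^{-1}(Y_\bullet)$. Your proposal ignores this, so your expected outcome---tangent space of dimension $d$ at every fixed point, hence smooth, hence irreducible---is false for the scheme you are computing on: a connected reducible scheme must be singular where its components meet. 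Two further problems: the $\Gm$-flow argument only shows that every connected component of $\HH$ meets $\gamma^{-1}(0)$, so connectedness of the central fibre still needs proof; and the fixed-point tangent-space computation you postpone is precisely the hard route the paper avoids---it is essentially what Becker carried out in \cite{Tanja2} for the single case $n=2$, $m=3$. The paper instead equips $\HH$ with its reduced structure and proves irreducibility structurally: it embeds $\HH$ via $\gamma\times\rho'$ into $C_1\cup C_2$ with $C_2\cong\PP(\hh^{\leq 2})$ the candidate homogeneous locus, and shows that a homogeneous ideal $I_L$ is a point of $\HH$ if and only if $q_{|L}=0$, i.e.\ $L\in\OG(2,E^*)$, in which case it already lies in $C_1=\HHp$. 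This is exactly the contrast with $GL(V)$, where all of $C_2=\PP(\hh^{\leq 1})$ consists of points of $\HH$ but only the proper subvariety $\PP(\overline{\OO_{[2,1^{m-2}]}})$ of it lies in the main component; any correct proof of the dichotomy in Proposition C has to isolate this mechanism, which your tangent-space strategy does not see.
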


In Section \ref{gen}, we recall some basic facts about symplectic varieties and closures of nilpotent orbits in simple Lie algebras. The case of $GL(V)$ is treated in Section \ref{posisimpy}, and the case of $Sp(V)$ is treated in Section \ref{posisimpy2}. The case of $O(V)$ is quite similar to the case of $GL(V)$, and details can be found in the thesis \cite[\S 3.4]{Terp} from which this article is extracted. Besides, we think that our methods also apply when $G=SL(V)$, while the case $G=SO(V)$ should be more involved.\\

\section{Generalities on symplectic varieties and closures of nilpotent orbits}  \label{gen}

\subsection{Symplectic varieties and symplectic desingularizations}

Let us first recall the definitions of symplectic variety and symplectic desingularization (see \cite{Beau} or the survey \cite{FuBB} for more details).  Let $Y$ be a normal variety whose regular locus $Y_{\mathrm{reg}}$ admits a \textit{symplectic form} $\Omega$ (that is, $\Omega$ is a holomorphic 2-form which is closed and non-degenerate at every point of $Y_{\mathrm{reg}}$) such that, for any desingularization $f:\ \widetilde{Y} \rightarrow Y$, the 2-form $f^*(\Omega)$ extends to a 2-form on the whole $\widetilde{Y}$, then we say that $Y$ is a \textit{symplectic variety}. Moreover, if $f:\ \widetilde{Y} \rightarrow Y$ is a desingularization such that $f^*(\Omega)$ extends to a symplectic form on $\widetilde{Y}$, then we say that $f$ is a \textit{symplectic desingularization} of $Y$. It must be emphasized that symplectic varieties do not always admit symplectic desingularizations, and when they do, there may be several of them.

As in the introduction, we denote $W=(V \oplus V^*)^{\oplus m}$, we take a reductive algebraic subgroup $G\subset GL(V)$ acting naturally on $W$, and we consider the symplectic reduction $\mu^{-1}(0)/\!/G$. The following conjecture motivates the study (and the name!) of $\mu^{-1}(0)/\!/G$: 
\begin{conjecture}[Kaledin, Lehn, Sorger]  \label{conjKLS}
With the above notation, the irreducible components $Y_1,\ldots,Y_r$ of $\mu^{-1}(0)/\!/G$ are symplectic varieties. Moreover, if every $Y_i$ admits a symplectic desingularization, then the quotient $V^{\oplus m}/\!/ G$ is smooth.  
\end{conjecture}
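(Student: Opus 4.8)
The plan is to treat the two assertions separately, since they have quite different flavors: the first (that each $Y_i$ is a symplectic variety) is a structural statement amenable to standard symplectic-reduction techniques, while the second (the implication toward smoothness of $V^{\oplus m}/\!/G$) is the genuinely deep part and is, I expect, precisely where any attempt meets the obstacle that keeps the statement a conjecture. For the first assertion I would work on the stable locus. Writing $W=T^*(V^{\oplus m})$ with its tautological symplectic form and taking the moment map $\mu\colon W\to\gg^*$, let $(\mu^{-1}(0))^s\subset\mu^{-1}(0)$ be the open set of points with closed $G$-orbit and finite stabilizer. On $(\mu^{-1}(0))^s/G$ the Marsden--Weinstein reduction produces a holomorphic symplectic form $\Omega$, so the first step is to show that this locus is nonempty and dense in each $Y_i$ and that its image fills $(Y_i)_{\mathrm{reg}}$ up to a set of codimension $\geq 2$. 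The second step is normality of each $Y_i$: I would argue that, on each component, $\mu^{-1}(0)$ is a reduced complete intersection cut out by the quadratic entries of $\mu$, apply Serre's criterion after bounding the codimension of $\Sing(\mu^{-1}(0))$, and then pass to the quotient using that good quotients of normal varieties are normal.

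With $\Omega$ in hand the remaining point is Beauville's extension criterion: for one (equivalently every) desingularization $f\colon\widetilde{Y_i}\to Y_i$ the form $f^*\Omega$ must extend holomorphically. Rather than building a resolution explicitly, I would invoke the general principle that symplectic reductions inherit symplectic singularities. The quotient of the smooth symplectic space $(\mu^{-1}(0))^s$ is, \'etale-locally near a point with reductive stabilizer $G_x$, modeled by a linear symplectic quotient $U/\!/G_x$ coming from the symplectic slice at that point; such quotients have symplectic singularities by the same argument Beauville uses for $U/\Gamma$ with $\Gamma\subset Sp(U)$ finite, extended from finite to reductive $G_x$. This reduces the extension statement to a local linear model and would finish the first assertion.

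For the second assertion the key is to exploit the cotangent structure $W=T^*(V^{\oplus m})$, which realizes $V^{\oplus m}/\!/G$ as the image of the zero section, a Lagrangian inside $\mu^{-1}(0)/\!/G$. Scaling the fibers $(V^*)^{\oplus m}$ with weight $1$ while fixing $V^{\oplus m}$ makes $\mu$ homogeneous, preserves $\mu^{-1}(0)$, contracts $\mu^{-1}(0)/\!/G$ onto $V^{\oplus m}/\!/G$, and acts on $\Omega$ with positive weight. Granting a symplectic desingularization $\pi\colon\widetilde{Y_i}\to Y_i$, I would first arrange $\pi$ to be equivariant for this $\CC^*$ (a symplectic resolution can be averaged to be equivariant for a reductive group acting on the base) and then study $\pi$ over the contracted Lagrangian $V^{\oplus m}/\!/G$. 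The goal is to show that the induced data on the attracting/fixed locus forces $V^{\oplus m}/\!/G$ to be smooth: a symplectic resolution is semismall and is an isomorphism over $(Y_i)_{\mathrm{reg}}$, and one would try to argue that any singular point of $V^{\oplus m}/\!/G$ yields, through the local cotangent model $T^*(\text{slice})$, a transverse singularity of $Y_i$ of ``cotangent type'' which admits no symplectic resolution unless the slice is already smooth.

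The hard part is exactly this last implication. Showing that a singularity of the Lagrangian base $V^{\oplus m}/\!/G$ actually obstructs a symplectic resolution of the whole reduction requires global control of $\pi$, not merely its formal local structure, because symplecticity of the resolving form is not a purely local condition the way the extension criterion is; moreover the contracting $\CC^*$-action has to be reconciled with the rigid constraints a symplectic resolution imposes (semismallness, purity, triviality of the canonical class). I would expect that a complete argument cannot be purely local and would need either a classification-type input, as in the author's case-by-case study through $\Hilb_{h_0}^G(\mu^{-1}(0))$, or a deep general theorem on the existence of symplectic resolutions of contracting conical symplectic singularities. This is why, despite the clean statement, the assertion remains a conjecture rather than a theorem.
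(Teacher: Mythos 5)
This statement is a conjecture (due to Kaledin--Lehn--Sorger), not a theorem of the paper: the paper gives no proof of it, and it remains open in general. What the paper does is (i) recall that the finite-group case is a theorem of Kaledin and Verbitsky, and (ii) observe that in the specific cases it treats ($G=GL(V)$, $O(V)$, $Sp(V)$ acting on $(V\oplus V^*)^{\oplus m}$) the conjecture ``is easily checked'' (with details deferred to the thesis \cite[\S A.2]{Terp}), because there the symplectic reduction is identified concretely with (a union of) nilpotent orbit closures $\overline{\OO_{\dd}}$ with all parts $d_i\leq 2$ (Propositions \ref{descQuotient} and \ref{descqu}); such closures are normal by the Kraft--Procesi results quoted in Section \ref{section22}, they are symplectic varieties by Theorem \ref{prop1}, and the second assertion of the conjecture is then verified case by case on this explicit list. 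So there is no ``paper proof'' against which your argument can be matched: you are attempting an open problem.

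As a proof attempt, your text has concrete gaps beyond the one you concede. For the first assertion: the claim that $\mu^{-1}(0)$ is, on each component, a reduced complete intersection cut out by the entries of $\mu$ is false in general for reductive $G\subset GL(V)$ --- already in this paper $\mu^{-1}(0)$ is reducible (Propositions \ref{compirredfibzero2} and \ref{dimmudezero33}), its components are not cut out by the moment map equations, and reducedness of the scheme-theoretic fiber is a delicate issue; without this input Serre's criterion cannot be run, nor can normality be passed to the quotient. More seriously, your reduction of Beauville's extension property to ``a local linear model $U/\!/G_x$, which has symplectic singularities by extending Beauville's finite-group argument from finite to reductive $G_x$'' is circular: the statement that a linear symplectic reduction by a reductive group has symplectic singularities is precisely (the local form of) the first half of the conjecture, so it cannot be invoked as a known principle. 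For the second assertion your own assessment is correct --- the implication from existence of symplectic desingularizations of the $Y_i$ to smoothness of $V^{\oplus m}/\!/G$ is exactly the part nobody knows how to prove, and no amount of $\CC^*$-equivariance or semismallness bookkeeping is known to close it. If your goal were only to verify the conjecture in the paper's setting, the efficient route is the paper's: identify $\mu^{-1}(0)/\!/G$ with explicit nilpotent orbit closures, quote the classical normality and symplectic-variety results, and check the second assertion directly against the classification of Springer desingularizations recalled in Section \ref{gen}.
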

When $G$ is a finite group, Conjecture \ref{conjKLS} was proved by Kaledin and Verbitsky, but the general case remains open. Let us mention that Becker showed in \cite{Tanja} that the converse of the second part of Conjecture \ref{conjKLS} holds for $G=Sp(V)$ with $\dim(V)=2$. In our setting, that is when $G=GL(V)$, $O(V)$, or $Sp(V)$, one easily checks that Conjecture \ref{conjKLS} holds (see \cite[\S A.2]{Terp} for details).

\subsection{Closures of nilpotent orbits} \label{section22}

We now recall some basic facts concerning the closures of nilpotent orbits (see \cite{CoMc,FuB} for more details). The following well-known result is due to Kostant, Kirillov, Souriau, and Panyushev:

\begin{theoreme} \label{prop1}
The normalization of the closure of an adjoint orbit in a semi-simple Lie algebra is a symplectic variety. 
\end{theoreme}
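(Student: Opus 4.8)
The plan is to produce the Kirillov--Kostant--Souriau (KKS) symplectic form on the orbit itself and then show it survives first onto the whole regular locus of the normalization, and finally onto an arbitrary resolution. Fix a point $x$ in a semisimple Lie algebra $\gg$ (the Lie algebra of a connected semisimple group $G$) and let $\OO=\Ad(G)\cdot x$ be its adjoint orbit. Since $\gg$ is semisimple, the Killing form $\kappa$ is nondegenerate and $G$-invariant, so I may identify $\gg\cong\gg^*$ and view $\OO$ as a coadjoint orbit. On $T_x\OO=\{[\xi,x]:\xi\in\gg\}=\Im(\mathrm{ad}_x)\cong\gg/\gg_x$ I would set
\[
\omega_x([\xi,x],[\eta,x]):=\kappa(x,[\xi,\eta]).
\]
First I would check this is well defined (the kernel of $\xi\mapsto\kappa(x,[\xi,\cdot\,])$ is exactly the centralizer $\gg_x$), $G$-invariant and nondegenerate, while closedness follows from the Jacobi identity in the usual way. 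This makes $(\OO,\omega)$ a smooth symplectic variety and accounts for the Kostant--Kirillov--Souriau part of the statement.

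Next, let $\pi:Y\to\overline{\OO}$ denote the normalization. As $\OO\cong G/G_x$ is smooth it is normal, so $\pi$ is an isomorphism over $\OO$, and I identify $\OO$ with a dense open subset of $Y$ contained in $Y_{\mathrm{reg}}$. The boundary $\overline{\OO}\setminus\OO$ is a union of orbits of strictly smaller dimension; since every (co)adjoint orbit is symplectic, hence even-dimensional, these boundary orbits have dimension at most $\dim(\OO)-2$, so $\overline{\OO}\setminus\OO$ has codimension $\geq 2$, and its finite preimage $\pi^{-1}(\overline{\OO}\setminus\OO)\supseteq Y_{\mathrm{reg}}\setminus\OO$ likewise has codimension $\geq 2$ in $Y$. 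Because $Y_{\mathrm{reg}}$ is smooth and $\omega$ is a holomorphic $2$-form defined off the codimension $\geq 2$ subset $Y_{\mathrm{reg}}\setminus\OO$, Hartogs' theorem extends it to a closed $2$-form $\tilde\omega$ on $Y_{\mathrm{reg}}$. For nondegeneracy I would argue with the top power: $\tilde\omega^{\wedge n}$ ($2n=\dim\OO$) is a section of the canonical bundle of $Y_{\mathrm{reg}}$ that is nowhere zero on $\OO$, so its zero locus is contained in $Y_{\mathrm{reg}}\setminus\OO$; being of codimension $\geq 2$ while the vanishing locus of a section of a line bundle is empty or of pure codimension one, it must be empty. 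Hence $\tilde\omega$ is a symplectic form on all of $Y_{\mathrm{reg}}$ (and $K_Y$ is trivial, $Y$ being normal).

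The remaining and genuinely harder point is that for every desingularization $f:\widetilde Y\to Y$ the form $f^*\tilde\omega$ extends regularly across the exceptional locus. By Namikawa's extension criterion (see \cite{Nam}) this holds precisely when $Y$ has canonical, equivalently (as $K_Y$ is trivial) rational, singularities. I would therefore invoke the theorem of Hesselink, Hinich and Panyushev that the normalization of a nilpotent orbit closure has rational singularities (see \cite{FuB,CoMc}); the general adjoint orbit reduces to this nilpotent case via the Jordan decomposition $x=s+n$ together with the transverse (Luna) slice, which identifies $\overline{\OO}$ \'etale-locally near the semisimple part with the product of the closed orbit of $s$ and the closure of the nilpotent orbit of $n$ in the Levi subalgebra $\gg_s$. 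This reduction and the rational-singularities input are where the real work lies, and I expect the latter to be the main obstacle; by contrast the KKS construction and the codimension bookkeeping of the first two paragraphs are formal.
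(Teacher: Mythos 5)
The paper itself gives no proof of this theorem: it is stated as a classical result, attributed to Kostant, Kirillov, Souriau and Panyushev, with a pointer to \cite{CoMc,FuB}. So your attempt can only be measured against the standard argument in the literature, and that is exactly what you have reconstructed: the KKS form on $\OO$, Hartogs extension to $Y_{\mathrm{reg}}$ across a codimension-two boundary, nondegeneracy via the top exterior power, and then Namikawa's criterion (symplectic form on the regular locus plus rational Gorenstein singularities implies symplectic variety) fed by the Hinich--Panyushev theorem for nilpotent orbits and a Jordan-decomposition reduction for general adjoint orbits. This is the proof behind the paper's attribution, and its architecture is correct.

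Two steps are not complete as written. First, the codimension count has a genuine logical gap: from ``every orbit in $\overline{\OO}\setminus\OO$ has dimension at most $\dim(\OO)-2$'' one cannot conclude that $\overline{\OO}\setminus\OO$ has codimension $\geq 2$, because a $G$-stable subvariety can have dimension strictly larger than that of every orbit it contains (the adjoint representation of $SL_2$ is itself a union of orbits of dimension $\leq 2$). The missing ingredient is finiteness: invariant functions are constant on orbit closures, so $\overline{\OO}$ lies in a single fiber of the adjoint quotient $\gg \rightarrow \gg/\!/G$, and by Kostant's theorem such a fiber is a \emph{finite} union of orbits; only then does even-dimensionality give the codimension bound. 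Second, the reduction to the nilpotent case is a sketch rather than a proof: what one actually needs is that for $x=s+n$ the natural map $G\times^{G_s}\bigl(s+\overline{G_s\cdot n}\bigr)\rightarrow\overline{\OO}$ is finite, bijective and birational, so that it identifies normalizations, after which rational Gorenstein singularities can be checked \'etale-locally on this fibration over $G/G_s$ (equivalently, one may check them near the unique closed orbit $G\cdot s$, since the non-rational locus is closed and $G$-stable and every nonempty closed $G$-stable subset of $\overline{\OO}$ contains $G\cdot s$). You correctly flag this as where the real work lies, but the ``\'etale-local product near the semisimple part'' assertion is doing substantial unproved work. Two smaller points: the rational Gorenstein input is due to Hinich and Panyushev (Hesselink is not responsible for it), and Namikawa's extension criterion is not contained in the reference \cite{Nam} of this paper's bibliography but in his earlier paper on extension of $2$-forms. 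None of this invalidates the strategy; with these classical facts cited correctly, the argument is the standard complete proof.
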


Let now $\hh$ be a simple Lie algebra of classical type. If $\hh=\sl_{m}$, then every nilpotent element $f \in \sl_{m}$ is conjugate to an element of the form $diag(J_{d_1},\ldots,J_{d_k})$, where each $J_{d_i}$ is a Jordan block of size $d_i$, and $\dd=[d_1,\ldots,d_k]$ is a partition of $m$. Then there exists a one-to-one correspondence between the partitions $\dd=(d_1 \geq \cdots \geq d_k)$ of $m$ and the nilpotent orbits $\OO_\dd \subset \sl_{m}$ (see \cite[\S 3.1]{CoMc}). Now if $\hh=\sp_{2m}$ resp. if $\hh=\so_{2m}$, then a similar description exists (see \cite[\S 5.1]{CoMc}), it is obtained by cutting $\hh$ with a $SL_{2m}$-orbit $\OO_{\dd} \subset \sl_{2m}$, which gives a unique $Sp_{2m}$-orbit resp. a unique $O_{2m}$-orbit, if it is not empty. Let us note that an $O_{2m}$-orbit can be non-connected giving rise to two $SO_{2m}$-orbits that we will denote $\OO_{\dd}^I$ and $\OO_{\dd}^{I\!I}$.

If $\hh=\sl_{m}$, then $\overline{\OO_{\dd}}$ is always normal (\cite{KP2}). For the other classical types, the geometry of $\overline{\OO_{\dd}}$ was studied in \cite{KP3}; in particular, if $\hh=\sp_{2m}$ and $d_1+d_2 \leq 4$ resp. if $\hh=\so_{2m}$ and $d_1 \leq 2$, then $\overline{\OO_{\dd}}$ is normal. In the next sections, we will be interested only by conjugacy classes of elements $f \in \hh$ with $f^2=0$. Hence, from now on, we only consider partitions $\dd$ such that each $d_i \leq 2$. By Theorem \ref{prop1}, the variety $\overline{\OO_{\dd}}$ is symplectic, and we are going to describe its symplectic desingularizations (see \cite{FuB,Fu3,FuNa} for details).

As before, let $\hh$ be a simple Lie algebra of classical type, and let $H$ be the adjoint group of $\hh$. We consider $f:\ Z \rightarrow \overline{\OO_{\dd}}$ a symplectic desingularization. Then, by \cite[Proposition 3.1]{FuB}, the group $H$ acts naturally on $Z$ in such a way that $f$ is $H$-equivariant. One says that $f$ is a \textit{Springer desingularization} if there exists a parabolic subgroup $P \subset H$ and a $H$-equivariant isomorphism between $Z$ and the total space of the cotangent bundle over $H/P$, denoted by $\TT^*(H/P)$. Then, under this isomorphism, the map $f$ becomes  
\begin{equation*}
 \TT^*(H/P) \cong H \times^P \uu \longrightarrow \hh, \hspace{7mm} (h,x) \longmapsto \Ad(h).x,
\end{equation*}
where $\uu$ is the nilradical of the Lie algebra of $P$, and $H \times^P \uu$ denotes the quotient $(H \times \uu)/P$ under the (free) action of $P$ given by $p.(h,u)=(h\circ p^{-1},\Ad(p).u)$.

\begin{theoreme}  \label{ttt} 
\emph{(\cite[Theorem 3.3]{FuB})} With the above notation, if $f:\ Z \rightarrow \overline{\OO_{\dd}}$ is a symplectic desingularization, then $f$ is a Springer desingularization.
\end{theoreme}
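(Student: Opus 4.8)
The plan is to reconstruct the parabolic subgroup $P$ directly from the geometry of the symplectic desingularization $f\colon Z \to \overline{\OO_{\dd}}$, exploiting equivariance together with the conical structure. First I would record all the available symmetries. By \cite[Proposition 3.1]{FuB} the adjoint group $H$ acts on $Z$ so that $f$ is $H$-equivariant, and I claim that the scaling action of $\CC^*$ on $\hh$ — which contracts the cone $\overline{\OO_{\dd}}$ onto its vertex $0$, the unique fixed point — also lifts to a $\CC^*$-action on $Z$ commuting with $H$. The lift exists because $f$ is an isomorphism over the open orbit $\OO_{\dd}$ and a symplectic resolution of a conical symplectic variety is relatively minimal; an element of $\CC^*$ carries $Z$ to another symplectic resolution, and since there are only finitely many such resolutions while $\CC^*$ is connected, each must be fixed, yielding the lift (see \cite{Nam,FuBB}). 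Next I would observe that the boundary $\overline{\OO_{\dd}}\setminus\OO_{\dd}$ is a union of nilpotent orbits of strictly smaller, hence at least $2$ smaller, dimension, so $f$ is an isomorphism in codimension one; combined with the extension of the Kostant--Kirillov--Souriau form (Theorem \ref{prop1}) this shows $f$ is crepant. Thus $Z$ is a \emph{conical symplectic resolution}, and I may invoke the semismallness of symplectic resolutions \cite{FuBB}: every fibre of $f$ has dimension at most $\tfrac12\dim\overline{\OO_{\dd}}$, with equality over $0$. Consequently the central fibre $F:=f^{-1}(0)$ is a projective, $H$-stable, Lagrangian subvariety of $Z$ with $\dim F=\tfrac12\dim\OO_{\dd}$.

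The heart of the argument is to show that $F$ is a single closed $H$-orbit, isomorphic to a (partial) flag variety $H/P$, and that $\Omega$ then forces $Z\cong\TT^*(H/P)$. Since $f$ is $\CC^*$-equivariant and $0$ is the only fixed point of the base, the fixed locus $Z^{\CC^*}$ is contained in $F$, and the contracting flow retracts all of $Z$ onto it; I would prove that in fact $\CC^*$ acts trivially on $F$, so that $F=Z^{\CC^*}$ is smooth and is the sink of the Bialynicki--Birula decomposition. Then $Z$ is the total space of the attracting bundle of $F$, i.e. of its normal bundle $N_F$, and because $F$ is Lagrangian the form $\Omega$ identifies $N_F\cong\TT^*F$; hence $Z\cong\TT^*F$. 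Finally, showing that the smooth projective $H$-variety $F$ is a \emph{single} homogeneous orbit $H/P$ — equivalently, that $\OO_{\dd}$ is the Richardson orbit induced from the zero orbit of the Levi $\mathrm{Lie}(L)$ with $L\subset P$ — identifies $Z$ with $H\times^P\uu$, upon which $H$-equivariance and birationality pin down $f$ as the Springer map $(h,x)\mapsto\Ad(h).x$.

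I expect the main obstacle to be precisely the claim that $F$ is connected, smooth, and homogeneous, since this is what determines $P$ up to conjugacy; a priori one must exclude ``exotic'' symplectic resolutions whose central fibre is reducible or non-homogeneous. To control this I would use two inputs: that any two symplectic resolutions of a fixed conical symplectic variety are isomorphic in codimension one and are connected through a chain of Mukai flops \cite{Nam,FuNa}, and that the Springer resolutions already exhaust the chambers of the relative movable cone; comparing the induced $\CC^*$-weights (equivalently, the Poincar\'e polynomials of the central fibres) then forces $Z$ to coincide with a Springer resolution. In the setting of the present paper this final verification simplifies substantially, because every part of $\dd$ is at most $2$ and $\overline{\OO_{\dd}}$ is a determinantal-type variety whose crepant resolutions are explicit cotangent bundles of Grassmannians and of isotropic or orthogonal Grassmannians; one then checks directly, using the normality and geometry described in \cite{KP3,CoMc}, that these are the only symplectic resolutions, so that $f$ is of Springer type as asserted.
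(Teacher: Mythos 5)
First, a point of reference: the paper does not prove this statement at all. Theorem \ref{ttt} is quoted from Fu (\cite[Theorem 3.3]{FuB}), so the only meaningful comparison is with Fu's published argument, which is a Hamiltonian-geometry argument (the lifted $H$-action on $(Z,\Omega)$ has moment map equal to $f$ itself, and the parabolic is extracted from a closed $H$-orbit inside the central fibre), not the $\CC^*$/Bialynicki--Birula route you propose. Your outline is a reasonable sketch of the modern ``conical symplectic resolution'' viewpoint, and some of its ingredients are sound (crepancy, isotropy of fibres, the lifting of the dilation $\Gm$-action to $Z$). But the three steps that actually carry the weight of the theorem are each either asserted without proof, false in the generality in which you invoke them, or circular.

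Concretely: (i) semismallness does not give ``equality over $0$''. Fibres of symplectic resolutions are isotropic, so $\dim f^{-1}(0)\leq\tfrac12\dim\overline{\OO_{\dd}}$, but Lagrangian-ness of the central fibre is \emph{not} a general property of conical symplectic resolutions: for $\Hilb^n(\CC^2)\rightarrow \mathrm{Sym}^n(\CC^2)$, which is conical for the dilation action, the central fibre is the punctual Hilbert scheme, of dimension $n-1$, strictly less than half of $\dim \Hilb^n(\CC^2)=2n$. So the claim that $F$ is Lagrangian requires an argument specific to nilpotent orbit closures, and you give none. (ii) The assertion that $\CC^*$ acts trivially on $F$, hence that $F=Z^{\CC^*}$ is smooth, is unsupported and false in the same example: there the fixed locus is a finite set of monomial ideals sitting inside a positive-dimensional, singular central fibre. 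Everything downstream ($Z$ equals the attracting set, $N_F\cong\TT^*F$, hence $Z\cong\TT^*F$) rests on (i) and (ii), so the chain breaks here; and even granting them, identifying the Bialynicki--Birula affine fibration with the normal bundle needs the observation that all normal weights coincide, which you do not make. (iii) Most seriously, the homogeneity of $F$ --- which you correctly identify as the crux, since it is what produces $P$ --- is handled circularly: ``one then checks directly \ldots that these are the only symplectic resolutions'' \emph{is} the statement being proved, and the appeal to Mukai-flop connectedness and the chamber structure of the relative movable cone invokes machinery of Namikawa (\cite{Nam,FuNa}) that postdates and essentially subsumes Fu's theorem, without any computation being carried out; it is not explained how ``comparing Poincar\'e polynomials of central fibres'' would exclude a resolution whose central fibre is reducible or non-homogeneous. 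In short, every place where the special geometry of $\overline{\OO_{\dd}}$ (as opposed to an arbitrary symplectic cone) must enter is exactly a place where your argument is a placeholder; Fu's proof enters that geometry through the moment-map identification and the closed orbit in $f^{-1}(0)$, which is the ingredient your sketch is missing.
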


Thanks to the work of Fu and Namikawa, the Springer desingularizations of $\overline{\OO_{\dd}}$ are known (up to isomorphism). In particular:

\begin{itemize}
\item Let $\hh=\sl_m$ and $\dd=[2^N,1^{m-2N}]$ for some $0 \leq N \leq \frac{m}{2}$. We denote by $\Gr(p,\CC^{m})$ the Grassmannian of $p$-dimensional subspaces of $\CC^{m}$, and by $\TT_1^*$ resp. by $\TT_2^*$, the cotangent bundle over $\Gr(N,\CC^{m})$ resp. over $\Gr(m-N,\CC^{m})$. By \cite[\S 2]{Fu3}, if $N<\frac{m}{2}$, then $\TT_1^*$ and $\TT_2^*$ are the two Springer desingularizations of $\overline{\OO_{\dd}}$; else, $\TT_1^*=\TT_2^*$ is the unique Springer desingularization of $\overline{\OO_{\dd}}$. 

\item Let $\hh=\sp_{2m}$ and $\dd=[2^N,1^{2(m-N)}]$ for some $0 \leq N \leq m$. Then $\overline{\OO_{\dd}}$ admits a Springer desingularization if and only if $N=m$ (\cite[Proposition 3.19]{FuB}). We denote by $\IG(p,\CC^{2m})$ the Grassmannian of isotropic $p$-dimensional subspaces of $\CC^{2m}$, and by $\TT^*$ the cotangent bundle over $\IG(m,\CC^{2m})$. By \cite[Proposition 3.5]{FuNa}, if $N=m$, then $\TT^*$ is the unique Springer desingularization of $\overline{\OO_{\dd}}$.

\item  Let $\hh=\so_{2m}$ and $\dd=[2^N,1^{2(m-N)}]$  for some $0 \leq N \leq m$ with $N$ even. If $N=m$, then one associates to $\dd$ two distinct nilpotent orbits $\OO_{\dd}^{I}$ and $\OO_{\dd}^{I\!I}$. By \cite[Proposition 3.20]{FuB}, the variety $\overline{\OO_{\dd}}$ admits a Springer desingularization if and only if $N \in \{m-1,m\}$. We denote by $\OG(p,\CC^{2m})$ the Grassmannian of isotropic $p$-dimensional subspaces of $\CC^{2m}$. The Grassmannian $\OG(p,\CC^{2m})$ is irreducible except if $p=m$, in which case $\OG(m,\CC^{2m})=OG^I \cup OG^{I\!I}$ is the union of two irreducible components (exchanged by the natural action of $O_{2m}$). We denote by $\TT_I^*$ resp. by $\TT_{I\!I}^{*}$, the cotangent bundle over $OG^I$ resp. over $OG^{I\!I}$.   
If $N=m-1$, then $\TT_I^*$ and $\TT_{I\!I}^*$ are the two Springer desingularizations of $\overline{\OO_{[2^{m-1},1^2]}}$ by \cite[\S 2]{Fu3}.
If $N=m$, then $\TT_I^*$ resp. $\TT_{I\!I}^*$, is the unique Springer desingularization of $\overline{\OO_{[2^{m}]}^{I}}$ resp. of $\overline{\OO_{[2^{m}]}^{I\!I}}$, by \cite[Proposition 3.5]{FuNa}. 
 
\end{itemize}

\section{Case of \texorpdfstring{$GL_n$}{GLn}} \label{posisimpy}

In this section, we denote $V$ and $V'$ two finite dimensional vector spaces, and we take $G=GL(V)$ and $H=GL(V')$, both acting on 
$$W:=\Hom(V',V) \times \Hom(V,V')$$ 
as follows: 
\begin{equation*}
\forall (g,h) \in G \times H,\ \forall (u_1,u_2) \in W,\ (g,h).(u_1,u_2):=(g \circ u_1 \circ h^{-1}, h \circ u_2 \circ g^{-1}).
\end{equation*}
We denote by $\gg$ resp. by $\hh$, the Lie algebra of $G$ resp. of $H$, and $N:=\min \left( \lfloor \frac{m}{2} \rfloor ,n \right )$, where $n:=\dim(V)$, $m:=\dim(V')$, and $\lfloor . \rfloor$ is the lower integer part.

\subsection{The quotient morphism}  \label{sectionavecX}

The two main results of this section are Proposition \ref{descQuotient}, which describes the symplectic reduction $\mu^{-1}(0)/\!/G$, and Corollary \ref{fctH2}, which gives the Hilbert function $h_0$ of the general fibers of the quotient morphism $\nu:\ \mu^{-1}(0) \rightarrow \mu^{-1}(0)/\!/G$.     

We recall that $W$ is equipped with a $G$-invariant symplectic form $\Omega$ defined by:
\begin{equation} \label{defsymp}
\forall (u_1,u_2), (u'_1,u'_2) \in W,\ \Omega((u_1,u_2),(u'_1,u'_2)):=\tr(u'_1 \circ u_2)-\tr(u_1 \circ u'_2),
\end{equation} 
where $\tr$(.) denotes the trace. The corresponding moment map is given by:
\begin{equation*}  
\begin{array}{lccc}
 \mu:\ &    W  & \rightarrow  & \gg^*  \\
       &  (u_1,u_2)  & \mapsto      &  (f \mapsto \tr(u_2 \circ f \circ u_1))
\end{array}
\end{equation*}
and thus the zero fiber of $\mu$ is the $G \times H$-stable subvariety defined by:  
\begin{equation*}
\mu^{-1}(0)=\left\{(u_1,u_2) \in W \ |\ u_1 \circ u_2=0\right\}.
\end{equation*}
Let us determine the irreducible components of $\mu^{-1}(0)$ as well as their dimensions. Let $p \in \{0,\ldots,m\}$; we define the subvariety
\begin{equation}  \label{defXm}
X_p:=\left\{(u_1,u_2)\in W \ \middle| \ 
    \begin{array}{l}
       \Im(u_2) \subset \Ker(u_1);\\
       \rk(u_2) \leq \min(n,p);\\
       \dim(\Ker(u_1)) \geq \max(m-n,p).   
    \end{array}
\right\} \subset \mu^{-1}(0),
\end{equation} 
and we consider the diagram
\begin{equation*}
\xymatrix{   Z_p:=\{(u_1,u_2,L) \in W \times \Gr(p,V')\ \mid  \ \Im(u_2) \subset L \subset \Ker(u_1)\} \ar@{->>}[d]_{p_1} \ar@{->>}[rd]^{p_2} & \\   X_p & \Gr(p,V') }
\end{equation*} 
where the $p_i$ are the natural projections. We fix $L_0 \in \Gr(p,V')$; the second projection equips $Z_p$ with a structure of homogeneous vector bundle over $\Gr(p,V')$ whose fiber over $L_0$ is isomorphic to $F_p:=\Hom(V'/L_0,V) \times \Hom(V,L_0)$. Hence, $Z_p$ is a smooth variety of dimension $p(m-p)+mn$.

\begin{proposition} \label{compirredfibzero2}
The irreducible components of $\mu^{-1}(0)$ are
$$\left\{
    \begin{array}{ll}
        X_0, \ldots ,X_m &\text{ if } m \leq n;\\
        X_{m-n}, \ldots ,X_n &\text{ if } n < m < 2n;\\
        X_n &\text{ if } m \geq 2n;
    \end{array}
\right.$$
where $X_p$ is defined by (\ref{defXm}).
\end{proposition}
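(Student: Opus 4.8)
The plan is to realize each $X_p$ as the image of the smooth irreducible incidence variety $Z_p$ under the proper projection $p_1$, and to rewrite the defining inequalities of $X_p$ in a form that makes the inclusions among the $X_p$ transparent. Since $Z_p$ is a vector bundle over the Grassmannian $\Gr(p,V')$, it is smooth and irreducible, so $X_p=p_1(Z_p)$ is irreducible and closed. First I would check that $p_1$ is surjective: for $(u_1,u_2)\in X_p$ one has $\Im(u_2)\subseteq\Ker(u_1)$ with $\dim\Im(u_2)=\rk(u_2)\leq p\leq\dim\Ker(u_1)$, so a $p$-dimensional $L$ with $\Im(u_2)\subseteq L\subseteq\Ker(u_1)$ always exists. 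Then I would record the two ``automatic'' bounds valid on all of $\mu^{-1}(0)$, namely $\rk(u_2)\leq n$ and $\dim\Ker(u_1)=m-\rk(u_1)\geq m-n$. Using these, the conditions $\rk(u_2)\leq\min(n,p)$ and $\dim\Ker(u_1)\geq\max(m-n,p)$ collapse to
\[
X_p=\{(u_1,u_2)\in\mu^{-1}(0)\ \mid\ \rk(u_2)\leq p\leq\dim\Ker(u_1)\}.
\]
In particular every $(u_1,u_2)\in\mu^{-1}(0)$ lies in $X_{\rk(u_2)}$, so $\mu^{-1}(0)=\bigcup_{p=0}^{m}X_p$; the irreducible components are thus the maximal members of this finite family.

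The second step is to compute the generic point of each $X_p$. Pushing forward the generic point of $Z_p$, where $u_1\in\Hom(V'/L_0,V)$ and $u_2\in\Hom(V,L_0)$ both have maximal rank, gives $\rk(u_2)=\min(n,p)=:a_p$ and $\dim\Ker(u_1)=\max(p,m-n)=:b_p$. Because each $X_q$ is closed and each $X_p$ is irreducible, one has $X_p\subseteq X_q$ if and only if this generic point lies in $X_q$, i.e.\ if and only if $a_p\leq q\leq b_p$. This reduces the entire statement to the elementary combinatorics of the intervals $[a_p,b_p]$.

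Finally I would run the three regimes. If $m\leq n$, then $a_p=b_p=p$ for every $p$, so no $X_p$ is contained in another and $X_0,\ldots,X_m$ are all components. If $n<m<2n$, then $a_p=b_p=p$ exactly for $m-n\leq p\leq n$, while $X_p\subseteq X_{m-n}$ for $p<m-n$ and $X_p\subseteq X_n$ for $p>n$, leaving $X_{m-n},\ldots,X_n$. If $m\geq 2n$, then $m-n\geq n$, hence $a_p\leq n\leq b_p$ for every $p$, so $X_p\subseteq X_n$ for all $p$ and $X_n$ is the unique component. As a cross-check, the dimension formula $\dim X_p=p(m-p)+mn$ holds whenever $p\leq n$ or $p\geq m-n$ (where $p_1$ is birational), while $\dim X_p=2mn-n^2$ on the remaining range $n<p<m-n$; this is consistent with all the maximal $X_p$ above.

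I expect the main obstacle to be the correct identification of the generic point of $X_p$ --- that is, verifying that $(\rk(u_2),\dim\Ker(u_1))$ takes precisely the values $(a_p,b_p)$ there, and that these are the extremal values compatible with the constraints --- together with the care needed to keep the $\min/\max$ thresholds straight across the three overlapping regimes. Once the interval criterion $X_p\subseteq X_q\iff a_p\leq q\leq b_p$ is established, the remainder is routine combinatorics.
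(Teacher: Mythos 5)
Your proof is correct and follows essentially the same route as the paper: realize each $X_p$ as the image of the irreducible bundle $Z_p$ under $p_1$, write $\mu^{-1}(0)=\bigcup_{p} X_p$, and then sort out the inclusion relations among the $X_p$ in the three regimes. The only difference is that your generic-point interval criterion $X_p\subseteq X_q \iff \min(n,p)\leq q\leq \max(p,m-n)$ makes fully explicit the step the paper leaves to the reader (``one easily checks that there is no other inclusion relation''), which is a sharpening of the same argument rather than a new approach.
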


\begin{proof}
We have
$$\mu^{-1}(0)=\{(u_1,u_2)\in W \ \mid  \ \Im(u_2) \subset \Ker(u_1) \}= \bigcup_{i=0}^{m} X_i.$$
Furthermore, for every $p \in \{0,\ldots,m\}$, the morphism $p_1$ is surjective and $Z_p$ is irreducible, hence $X_p$ is irreducible.\\
If  $m \geq 2n$, then  
$$\left\{
    \begin{array}{ll}
        X_0 \subset  \cdots \subset X_n;   \\
        X_n = \cdots=X_{m-n};\\
        X_{m-n} \supset  \cdots \supset X_{m};
    \end{array}
\right.$$  
and thus $\mu^{-1}(0)=X_n$.\\
If $m <2n$, then  
$$\left\{
    \begin{array}{ll}
        X_0 \subset  \cdots \subset X_{\max(0,m-n)};   \\
        X_{\min(m,n)} \supset  \cdots \supset X_{m};
    \end{array}
\right.$$  
and one easily checks that there is no other inclusion relation between the $X_p$. 
\end{proof}

\begin{corollaire} \label{fibzero2}
The dimension of $\mu^{-1}(0)$ is 
$$
\dim(\mu^{-1}(0))= \left\{
    \begin{array}{ll}
            nm+\frac{1}{4}{m}^2   &\text{ if }  m < 2n \text{ and $m$ is even;}\\
            nm+\frac{1}{4}({m}^2-1) &\text{ if }  m < 2n \text{ and $m$ is odd;} \\
            2nm-n^2 &\text{ if } m \geq 2n.
    \end{array}
\right.
$$
\end{corollaire}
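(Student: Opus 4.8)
The plan is to reduce the computation to finding the largest dimension among the irreducible components of $\mu^{-1}(0)$ exhibited in Proposition \ref{compirredfibzero2}, and to read off the dimension of each component $X_p$ from the auxiliary variety $Z_p$ already introduced. Since $\mu^{-1}(0)$ is the union of those $X_p$ which index an actual component, one has $\dim(\mu^{-1}(0))=\max_p \dim(X_p)$, the maximum being taken over the relevant index range (all of which satisfy $p \le \min(m,n)$, hence $p \le n$).

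First I would compute $\dim(X_p)$ for every such $p$. Recall that $p_1:\ Z_p \twoheadrightarrow X_p$ is surjective and that $Z_p$ is smooth and irreducible of dimension $p(m-p)+mn$; so it suffices to show that $p_1$ is birational, giving $\dim(X_p)=\dim(Z_p)=p(m-p)+mn$. To see this, take the generic point of $Z_p$ lying in the fibre $F_p=\Hom(V'/L_0,V)\times \Hom(V,L_0)$ over $L_0$: there $u_2:\ V \to L_0$ is surjective (possible since $p \le n=\dim(V)$), whence $\Im(u_2)=L_0$. For such a point the fibre of $p_1$ over $(u_1,u_2)$ is $\{L \in \Gr(p,V') \mid \Im(u_2) \subset L \subset \Ker(u_1)\}=\{L_0\}$, because $\dim(\Im(u_2))=p=\dim(L)$ forces $L=\Im(u_2)=L_0$. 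Thus the generic fibre of $p_1$ is a single point, $p_1$ is birational, and the claimed formula for $\dim(X_p)$ holds.

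It then remains to maximize the quadratic $\varphi(p):=mn+pm-p^2$ over the range of indices giving components. When $m \ge 2n$, the only component is $X_n$, so the answer is $\varphi(n)=2nm-n^2$. When $m<2n$, the components are the $X_p$ with $\max(0,m-n)\le p \le \min(m,n)$; since $m<2n$ the vertex $p=\tfrac{m}{2}$ of the downward parabola $\varphi$ lies in this interval, so for $m$ even the maximum is $\varphi(\tfrac{m}{2})=nm+\tfrac14 m^2$, and for $m$ odd it is attained at the two nearest integers $p=\tfrac{m\pm 1}{2}$ and equals $nm+\tfrac14(m^2-1)$. Collecting these three values yields the stated formula.

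The main obstacle I anticipate is not the arithmetic but justifying the two non-formal points. First, that $p_1$ is genuinely birational (equivalently, that $\dim(X_p)$ is not strictly smaller than $\dim(Z_p)$), which rests on the observation that generically $\Im(u_2)$ already saturates $L$. Second, that the maximizing index $\lfloor m/2 \rfloor$ (or $\lceil m/2 \rceil$) really lies inside the interval $[\max(0,m-n),\min(m,n)]$ of indices producing honest components — this is precisely where the hypothesis $m<2n$ enters, through the equivalence $m<2n \Leftrightarrow m-n<\tfrac{m}{2}<n$, which places the vertex strictly between the two endpoints.
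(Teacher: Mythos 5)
Your proposal is correct and follows essentially the same route as the paper: reduce to the components $X_p$ of Proposition \ref{compirredfibzero2}, prove that $p_1:\ Z_p \rightarrow X_p$ is birational so that $\dim(X_p)=p(m-p)+mn$, and then maximize this quadratic over the admissible range of $p$. The only difference is cosmetic: you verify birationality via generic surjectivity of $u_2$ for $p \leq n$ (which suffices, since every component has $p \leq \min(m,n)$), and you spell out the vertex-location argument that the paper compresses into ``study the variations of the polynomial $Q$.''
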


\begin{proof}
By Proposition \ref{compirredfibzero2}, it suffices to compute the dimension of $X_p$ for some $p$. If $p \leq n$ or $p \geq m-n$, then one may check that the map $p_1:\ Z_p \rightarrow X_p$ is birational, and thus $Q(p):=\dim(X_p)=\dim(Z_p)=p(m-p)+mn$. It remains simply to study the variations of the polynomial $Q$ to obtain the result.   
\end{proof}

We recall that the quotient morphism $W \rightarrow W/\!/G$ is given by $(u_1,u_2) \mapsto u_2 \circ u_1 \in \End(V')=\hh$, by classical invariant theory (see \cite[§9.1.4]{Pro} for instance). Let us now fix $l \in \{0,\ldots,N\}$. We also fix a basis $\BB$ of $V$ resp. $\BB'$ of $V'$, and we introduce some notation that we will use in the proofs of Proposition \ref{descQuotient} and Lemma \ref{fibreUnsymp1}:
\begin{align}
&\bullet \ (u_1^l,u_2^l):= \left( \begin{bmatrix}
0_{l,m-l}  &I_l \\
0_{n-l,m-l}  & 0_{n-l,l} 
\end{bmatrix},  \begin{bmatrix}
I_l  &0_{l,n-l} \\
0_{m-l,l}  & 0_{m-l,n-l} 
\end{bmatrix}   \right) \in W;  \label{ll1}  \\ 
&\bullet \ f_l:= \begin{bmatrix}
0_{l,m-l}  &I_l \\
0_{m-l,m-l}  & 0_{m-l,l} 
\end{bmatrix} \in \hh.  \label{ll2} 
\end{align}
If $\dd$ is a partition of $m$, then we denote by $\OO_{\dd} \subset \hh\cong \gl_{m}$ the corresponding nilpotent orbit (see Section \ref{section22}). 

\begin{proposition}  \label{descQuotient}
The symplectic reduction of $W$ by $G$ is
$\mu^{-1}(0)/\!/G=\overline{\OO_{[2^N,1^{m-2N}]}}$. 
\end{proposition}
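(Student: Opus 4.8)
The plan is to leverage the explicit description of the quotient morphism recalled just above the statement. By the first fundamental theorem of invariant theory for $GL(V)$, the algebra $\CC[W]^G$ is generated by the entries of $u_2 \circ u_1$, so the quotient morphism $W \rightarrow W/\!/G$ factors as a closed immersion $W/\!/G \hookrightarrow \hh=\End(V')$, $(u_1,u_2) \mapsto u_2 \circ u_1$. Since $G$ is reductive and $\mu^{-1}(0) \subset W$ is closed and $G$-stable, the restriction $\CC[W]^G \twoheadrightarrow \CC[\mu^{-1}(0)]^G$ is surjective (exactness of invariants), so $\mu^{-1}(0)/\!/G$ is realized as a closed, reduced subvariety of $\hh$ whose points are exactly the matrices $f := u_2 \circ u_1$ with $(u_1,u_2) \in \mu^{-1}(0)$. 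It therefore suffices to identify this image set with $\overline{\OO_{[2^N,1^{m-2N}]}}$ as a subset of $\hh \cong \gl_m$.

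For the inclusion $\subseteq$, I would observe that for $(u_1,u_2)\in\mu^{-1}(0)$ the relation $u_1 \circ u_2 = 0$ forces $f^2 = u_2 \circ (u_1 \circ u_2) \circ u_1 = 0$, so $f$ is nilpotent with $f^2=0$; its Jordan type is then $[2^r,1^{m-2r}]$ with $r=\rk(f)$. Two independent rank bounds pin down $r$: first, $f$ factors through $V$, whence $r \leq \dim(V)=n$; second, $f^2=0$ gives $\Im(f) \subseteq \Ker(f)$, whence $2r \leq m$, i.e. $r \leq \lfloor m/2 \rfloor$. Together these give $r \leq N$, and by the description of orbit closures in type $A$ via the dominance order recalled in Section \ref{section22} (see \cite{CoMc}) we conclude $f \in \OO_{[2^r,1^{m-2r}]} \subseteq \overline{\OO_{[2^N,1^{m-2N}]}}$.

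For the reverse inclusion, I would use that the image is closed (being a closed subvariety of $\hh$ by the first paragraph) and $H$-stable: the quotient morphism intertwines the action of $H=GL(V')$ on $\mu^{-1}(0)$ with the adjoint action on $\hh$, since $h.(u_1,u_2)=(u_1 \circ h^{-1}, h \circ u_2)$ sends $u_2 \circ u_1$ to $\Ad(h)(u_2 \circ u_1)$. Now the explicit pair $(u_1^N,u_2^N)$ of (\ref{ll1}) lies in $\mu^{-1}(0)$ (one checks $u_1^N \circ u_2^N = 0$ because $2N \leq m$) and maps to $f_N=u_2^N \circ u_1^N$ of (\ref{ll2}), which has rank $N$ and hence lies in $\OO_{[2^N,1^{m-2N}]}$. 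A closed $H$-stable set containing $f_N$ contains $\overline{H \cdot f_N}=\overline{\OO_{[2^N,1^{m-2N}]}}$, which gives the reverse inclusion. As both sides are reduced closed subvarieties of $\hh$ with equal point-sets, they coincide as schemes.

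The two inclusions become essentially immediate once the invariant-theoretic framework is set up, so I expect the only delicate point to be the content of the first paragraph: upgrading the set-theoretic equality of images to an equality of varieties by identifying $\mu^{-1}(0)/\!/G$ with a genuine closed and reduced subvariety of $\hh$. This rests on the first fundamental theorem for $GL(V)$ together with the exactness of the invariants functor for the reductive group $G$. Everything else reduces to the standard combinatorics of nilpotents with $f^2=0$, where the Jordan type is entirely determined by the rank.
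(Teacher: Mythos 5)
Your proof is correct and follows essentially the same route as the paper: identify $\mu^{-1}(0)/\!/G$ inside $\hh$ via the classical quotient map $(u_1,u_2)\mapsto u_2\circ u_1$, obtain the inclusion ``$\subset$'' from $f^2=0$, and obtain ``$\supset$'' from the explicit witnesses (\ref{ll1})--(\ref{ll2}) together with $H$-equivariance. The only differences are minor: the paper exhibits a witness $(u_1^l,u_2^l)$ for every orbit $\OO_{[2^l,1^{m-2l}]}$, $l\le N$, in the closure instead of invoking closedness of the image plus the single witness for $l=N$, and your version makes explicit the rank bound $\rk(f)\le n$ (needed when $n<\lfloor m/2\rfloor$) that the paper leaves implicit.
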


\begin{proof}
If $f \in \mu^{-1}(0)/\!/G$, then there exists $(u_1,u_2) \in \mu^{-1}(0)$ such that $f=u_2 \circ u_1$, and thus $f \circ f=(u_2 \circ u_1) \circ (u_2 \circ u_1)=u_2 \circ (u_1 \circ u_2) \circ u_1=0$, whence the inclusion "$\subset$". 
Now, let $f \in \overline{\OO_{[2^N,1^{m-2N}]}}$. Up to conjugation by an element of $H$, we can suppose that $f=f_l$ for some $l \leq N$, where $f_l$ is defined by (\ref{ll2}). But then $u_2^l \circ u_1^l=f_l$ and $u_1^l \circ u_2^l=0$, where $u_1^l$ and $u_2^l$ are defined by (\ref{ll1}), and thus $f \in \mu^{-1}(0)/\!/G$. 
\end{proof} 

\begin{corollaire}
The symplectic reduction $\mu^{-1}(0)/\!/G \subset \hh$ is irreducible and decomposes into $N+1$ orbits for the adjoint action of $H$: 
$$U_i:=\OO_{[2^i,1^{m-2i}]},\ \text{ for } i=0, \ldots, N.$$
\end{corollaire}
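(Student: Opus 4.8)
The plan is to derive everything from Proposition \ref{descQuotient}, which already identifies $\mu^{-1}(0)/\!/G$ with the single nilpotent orbit closure $\overline{\OO_{[2^N,1^{m-2N}]}}$, combined with the standard description of nilpotent orbit closures in type $A$ recalled in Section \ref{section22}. Irreducibility is then immediate: the orbit $\OO_{[2^N,1^{m-2N}]}$ is the image of the connected group $H$ under an orbit map, hence irreducible, and therefore so is its closure $\mu^{-1}(0)/\!/G$.

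For the orbit stratification, I would first pin down which orbits can possibly occur. The condition $f^2=0$ is closed and holds on $\OO_{[2^N,1^{m-2N}]}$, so it holds on the entire closure; likewise $\rk(f) \leq N$ (the vanishing of the $(N+1)$-minors) is closed and holds on the orbit. A nilpotent $f \in \gl_m$ with $f^2=0$ has Jordan type $[2^{\rk(f)},1^{m-2\rk(f)}]$, so the $H$-orbit through it is determined by its rank alone. Since the closure of an $H$-stable set is $H$-stable, $\mu^{-1}(0)/\!/G$ is a union of $H$-orbits, each of which has $f^2=0$ and $\rk(f)\leq N$; hence it is contained in, and is a union of some of, the orbits $\OO_{[2^i,1^{m-2i}]}$ with $0 \leq i \leq N$.

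I would then show that all $N+1$ of these orbits actually occur. By the closure (dominance) order on nilpotent orbits recalled in Section \ref{section22}, one has $\OO_{\dd'} \subset \overline{\OO_{\dd}}$ if and only if $\dd' \leq \dd$; restricted to partitions with parts $\leq 2$ this is the total order $[2^i,1^{m-2i}] \leq [2^j,1^{m-2j}] \iff i \leq j$, so every $\OO_{[2^i,1^{m-2i}]}$ with $i \leq N$ lies in the closure. Alternatively, and more in keeping with the explicit computations already made, the representatives $f_l$ of (\ref{ll2}) satisfy $f_i = u_2^i \circ u_1^i$ with $u_1^i \circ u_2^i = 0$, exactly as in the proof of Proposition \ref{descQuotient}; this exhibits each $f_i$ in $\mu^{-1}(0)/\!/G$, so all the orbits $\OO_{[2^i,1^{m-2i}]} = H \cdot f_i$ are present. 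Combining the two inclusions yields the disjoint decomposition into the $N+1$ orbits $U_i$.

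The only genuinely non-formal input is the inclusion $\OO_{[2^i,1^{m-2i}]} \subset \overline{\OO_{[2^N,1^{m-2N}]}}$ for $i \leq N$, i.e. the closure order; but this is standard, and in the case at hand can be seen directly by degenerating $f_N$ to $f_i$ through a one-parameter subgroup of $H$ contracting $N-i$ of the size-$2$ Jordan blocks, so I expect no real obstacle.
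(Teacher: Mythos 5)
Your proof is correct and follows essentially the route the paper intends: the corollary is stated there without proof, as an immediate consequence of Proposition \ref{descQuotient} combined with the standard facts recalled in Section \ref{section22} (irreducibility of orbit closures, the fact that a nilpotent $f$ with $f^2=0$ has Jordan type determined by $\rk(f)$, and the closure order on these orbits), which is exactly what you spell out. Your alternative justification that all $N+1$ orbits occur, via the explicit representatives $f_l$ of (\ref{ll2}) already used in the proof of Proposition \ref{descQuotient}, is a nice self-contained variant but does not change the substance of the argument.
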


The closures of the nilpotent orbits $U_i$ are nested in the following way:
$$\{0\}=\overline{U_0} \subset \cdots \subset \overline{U_N}=\mu^{-1}(0)/\!/G.$$
Hence, $\mu^{-1}(0)/\!/G$ is a symplectic variety (see Section \ref{gen}), of dimension $2N(m-N)$ (\cite[Corollary 6.1.4]{CoMc}), and whose singular locus is $\overline{U_{N-1}}$ (\cite[\S 3.2]{KP4}). \\
By Corollary \ref{fibzero2}, the dimension of the general fibers of the quotient morphism $\nu$ is 
\begin{equation}  \label{dfg}
\left\{
    \begin{array}{ll}
          nm-\frac{1}{4} {m}^2 &\text{ if } m <2n \text{ and $m$ is even; } \\
          nm-\frac{1}{4} ({m}^2-1) &\text{ if } m<2n \text{ and $m$ is odd;}  \\
          n^2 &\text{ if } m \geq 2n.
    \end{array}
\right.
\end{equation}
If $m<2n$, then $N=\lfloor \frac{m}{2} \rfloor$, and we denote 
\begin{equation}  \label{subH}
G':=\left\{ \begin{bmatrix}
M  &0_{n-N,N} \\
0_{N,n-N}  & I_N 
\end{bmatrix},\ M \in GL_{n-N}\right\} \cong GL_{n-N},
\end{equation}
which is a reductive algebraic subgroup of $G \cong GL_n$.

\begin{proposition} \label{fibreUnsymp1}
The general fibers of the quotient morphism $\nu:\ \mu^{-1}(0) \rightarrow \mu^{-1}(0)/\!/G$ are isomorphic to 
$$\left\{
    \begin{array}{ll}
        G    &\text{ if } m \geq 2n;\\
        G/G'  &\text{ if } m <2n \text{ and $m$ is even}; 
      \end{array}
\right.
$$
where $G' \subset G$ is the subgroup defined by (\ref{subH}).
\end{proposition}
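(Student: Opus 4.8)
The plan is to reduce the computation to a single explicit fiber, and then to show that $G$ acts transitively on it with stabilizer (conjugate to) $G'$. Since $\nu$ is $H$-equivariant and, by Proposition~\ref{descQuotient}, $\mu^{-1}(0)/\!/G=\overline{\OO_{[2^N,1^{m-2N}]}}$ has open dense $H$-orbit $U_N=\OO_{[2^N,1^{m-2N}]}$, all fibers of $\nu$ over $U_N$ are isomorphic as $G$-varieties; a general point of the quotient lies in $U_N$, so the general fiber is isomorphic to $\nu^{-1}(f_N)$, where $f_N$ denotes the element~(\ref{ll2}) with $l=N$. One checks that $f_N$ has rank $N$ and satisfies $f_N^2=0$, hence has Jordan type $[2^N,1^{m-2N}]$ and indeed lies in $U_N$. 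By~(\ref{ll1}) the pair $(u_1^N,u_2^N)$ satisfies $u_2^N\circ u_1^N=f_N$ and $u_1^N\circ u_2^N=0$, so it is a distinguished point of this fiber, on which $G$ acts by $g.(u_1,u_2)=(g\circ u_1,u_2\circ g^{-1})$ (this preserves $u_2\circ u_1$, hence the fiber).

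The crucial preliminary step is a rank analysis that forces the relevant subspaces to be constant along the fiber. For any $(u_1,u_2)\in\nu^{-1}(f_N)$ one has $\Im(u_2)\subset\Ker(u_1)$ (since $u_1\circ u_2=0$) and $\rk(u_1),\rk(u_2)\geq\rk(f_N)=N$. If $m\geq 2n$ then $N=n$, and since $\rk(u_1)\leq\dim(V)=n$ and $\rk(u_2)\leq n$ we get that $u_1$ is surjective and $u_2$ injective; if $m<2n$ with $m$ even then $N=m/2$, and the inequality $\rk(u_1)+\rk(u_2)\leq m$ (coming from $\Im(u_2)\subset\Ker(u_1)$) combined with $\rk(u_1),\rk(u_2)\geq m/2$ forces $\rk(u_1)=\rk(u_2)=N$. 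In both cases $\rk(u_1)=\rk(u_2)=N$, so comparing dimensions in $\Ker(u_1)\subset\Ker(f_N)$ and $\Im(f_N)\subset\Im(u_2)$ gives $\Ker(u_1)=\Ker(f_N)=:K$ and $\Im(u_2)=\Im(f_N)=:L$ for every point of the fiber. Moreover, writing $P:=\Im(u_1)$ and $Q:=\Ker(u_2)$, one has $V=P\oplus Q$: if $v=u_1(w)\in P\cap Q$ then $f_N(w)=u_2(v)=0$, so $w\in\Ker(f_N)=\Ker(u_1)$ and $v=0$, while $\dim(P)+\dim(Q)=N+(n-N)=\dim(V)$.

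Next I would establish transitivity. Given two points $(u_1,u_2)$ and $(u_1',u_2')$ of the fiber, both $u_1,u_1'$ have kernel $K$ and induce isomorphisms $\bar u_1:V'/K\to P$ and $\bar u_1':V'/K\to P'$, while $u_2|_P:P\to L$ and $u_2'|_{P'}:P'\to L$ are isomorphisms (injectivity follows from $P\cap Q=0$, surjectivity from $u_2(\Im(u_1))=\Im(f_N)=L$). Setting $g|_P:=\bar u_1'\circ\bar u_1^{-1}$ gives $g\circ u_1=u_1'$, and the identity $(u_2|_P)\circ\bar u_1=(u_2'|_{P'})\circ\bar u_1'$ (both equal the map $V'/K\to L$ induced by $f_N$) yields $u_2'|_{P'}\circ g|_P=u_2|_P$; extending $g$ by any isomorphism $Q\to Q'$, so that $g(Q)\subset\Ker(u_2')$, produces an element $g\in GL(V)$ with $g\circ u_1=u_1'$ and $u_2'\circ g=u_2$, i.e. $g.(u_1,u_2)=(u_1',u_2')$. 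Hence $\nu^{-1}(f_N)$ is a single $G$-orbit. A direct computation of the stabilizer of $(u_1^N,u_2^N)$ finishes the proof: the conditions $g\circ u_1^N=u_1^N$ and $u_2^N\circ g^{-1}=u_2^N$ force $g$ to restrict to the identity on $\Im(u_1^N)$ and to preserve the complementary subspace $\Ker(u_2^N)$, so in the block form adapted to $V=\CC^N\oplus\CC^{n-N}$ it has shape $g=\mathrm{diag}(I_N,D)$ with $D\in GL_{n-N}$; this stabilizer is conjugate in $G$ to the subgroup $G'$ of~(\ref{subH}), whence $\nu^{-1}(f_N)\cong G/G'$. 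For $m\geq 2n$ one has $N=n$ and this subgroup is trivial, giving $\nu^{-1}(f_N)\cong G$.

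I expect the rank analysis to be the main obstacle: everything downstream (the constancy of $K$ and $L$, the splitting $V=P\oplus Q$, and hence transitivity) rests on pinning down $\rk(u_1)=\rk(u_2)=N$. This is immediate when $m\geq 2n$, but in the even case $m<2n$ it relies on the tight squeeze $2N=m\geq\rk(u_1)+\rk(u_2)\geq 2N$; it is precisely the failure of this squeeze that forces the odd case $m<2n$ to be excluded from the statement, since there the ranks are no longer determined and the fiber need not be a single $G$-orbit. The transitivity and stabilizer computations are then routine linear algebra.
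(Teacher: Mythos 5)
Your proof is correct, and it takes a genuinely different route from the paper's. Both arguments reduce, via the $H$-equivariance of $\nu$ and Proposition \ref{descQuotient}, to the single fiber $\nu^{-1}(f_N)$ containing the explicit point $(u_1^N,u_2^N)$ of (\ref{ll1}). The paper then argues indirectly: it invokes a theorem of Luna (cited from \cite{SB}) to show that $G.(u_1^N,u_2^N)$ is the unique closed orbit of the fiber (by checking that the centralizer orbit $C_G(G').(u_1^N,u_2^N)$ is closed), computes the stabilizer, and concludes that the fiber equals this orbit by a dimension count: $\dim(G/G')=N(2n-N)$, resp. $\dim(G)=n^2$, agrees with the general-fiber dimension recorded in (\ref{dfg}), which in turn rests on Corollary \ref{fibzero2}. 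You instead prove transitivity of $G$ on $\nu^{-1}(f_N)$ by direct linear algebra: the rank squeeze pins down $\Ker(u_1)=\Ker(f_N)$ and $\Im(u_2)=\Im(f_N)$, gives the splitting $V=\Im(u_1)\oplus\Ker(u_2)$, and from this you construct an explicit $g\in GL(V)$ carrying any point of the fiber to any other. Your route is more elementary and self-contained (no Luna criterion, no fiber-dimension formula), and it isolates exactly where the hypothesis ``$m\geq 2n$, or $m<2n$ with $m$ even'' enters, namely in closing the rank squeeze; this matches the paper's remark that the general fibers are reducible in the odd case. What the paper's route buys is brevity, given that Corollary \ref{fibzero2} and (\ref{dfg}) were already in place, and a pattern of argument that transfers to the $Sp(V)$ case (Proposition \ref{fibreUnsymp}), where the analogous explicit transitivity computation would be more painful. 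One further small point in your favor: you correctly observe that the stabilizer $\{\mathrm{diag}(I_N,D) : D\in GL_{n-N}\}$ is conjugate to, rather than literally equal to, the subgroup $G'$ of (\ref{subH}); this is harmless for the statement and for Corollary \ref{fctH2}, since conjugate subgroups yield isomorphic homogeneous spaces and equal multiplicities $\dim(M^{G'})$.
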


\begin{proof}
We first suppose that $m <2n$ and $m$ is even (that is, $N=\frac{m}{2}$). With the notation (\ref{ll1}), and by a result of Luna (see \cite[\S I.6.2.5, Theorem 10]{SB}), we have the equivalence
$$ G.(u_1^N,u_2^N) \text{ is closed in }\mu^{-1}(0) \Leftrightarrow C_G(G').(u_1^N,u_2^N) \text{ is closed in }\mu^{-1}(0).$$
Now $C_G(G')=\left\{ \begin{bmatrix}
M  &0 \\
0  &\lambda I_{n-N} \end{bmatrix},\ M\in GL_{N},\ \lambda \in \Gm \right\}$, where $\Gm$ denotes the multiplicative group. Hence 
$$C_G(G').(u_1^N,u_2^N)=\left\{ \left(\begin{bmatrix}
0   &M \\
0   &0 \end{bmatrix},\begin{bmatrix}
M^{-1} &0 \\
0     &0 \end{bmatrix} \right)\ ,\ M\in {GL}_{N}\right\} \subset \mu^{-1}(0)$$ 
is a closed subset, and thus $G.(u_1^N,u_2^N)$ is the unique closed orbit contained in the fiber $\nu^{-1}(f_N)$, where $f_N$ is defined by (\ref{ll2}). One may check that $\Stab_G((u_1^N,u_2^N))=G'$. Furthermore, $\dim(G/G')=N(2n-N)$, which is also the dimension of the general fibers of $\nu$ by (\ref{dfg}), and thus $\nu^{-1}(f_N) \cong G/G'$.\\ 
We now suppose that $m \geq 2n$ (that is, $N=n$). One may check that $\Stab_G((u_1^n,u_2^n))=Id$, and thus the fiber $\nu^{-1}(f_n)$ contains a unique closed orbit isomorphic to $G$. But $\dim(G)=n^2$ is the dimension of the general fibers of $\nu$ by (\ref{dfg}), hence $\nu^{-1}(f_n) \cong G$. 
\end{proof}

\begin{corollaire} \label{fctH2}
The Hilbert function $h_0$ of the general fibers of the quotient morphism $\nu:\ \mu^{-1}(0) \rightarrow \mu^{-1}(0)/\!/G$ is given by:
$$\forall M \in \Irr(G),\ h_0(M)=\left\{
    \begin{array}{ll}
     \dim(M)  &\text{ if } m \geq 2n;\\
     \dim(M^{G'})    &\text{ if } m <2n \text{ and $m$ is even}; 
      \end{array}
\right.
$$
where $G' \subset G$ is the subgroup defined by (\ref{subH}).
\end{corollaire}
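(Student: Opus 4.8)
The plan is to identify the coordinate ring of a general fiber as a $G$-module and to read off the multiplicities from the algebraic Peter-Weyl theorem. By definition, $h_0(M)$ is the multiplicity of $M$ in $\CC[F]$, where $F$ denotes a general fiber of $\nu$; hence it suffices to decompose $\CC[F]$ as a $G$-module. By Proposition \ref{fibreUnsymp1}, we have $F \cong G$ when $m \geq 2n$, and $F \cong G/G'$ when $m < 2n$ and $m$ is even, where in both cases $G$ acts by (left) translation.

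First I would recall the algebraic Peter-Weyl decomposition: as a $G \times G$-module for the two translation actions, $\CC[G] \cong \bigoplus_{M \in \Irr(G)} M \otimes M^*$. In the case $m \geq 2n$, restricting to the left-translation $G$-action gives $\CC[G] \cong \bigoplus_{M} M^{\oplus \dim(M^*)}$, and since $\dim(M^*)=\dim(M)$ this yields $h_0(M)=\dim(M)$.

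Next, for $m<2n$ with $m$ even, I would use the identification $\CC[G/G']=\CC[G]^{G'}$, where $G'$ acts by right translation. Taking $G'$-invariants in the Peter-Weyl decomposition gives $\CC[G/G'] \cong \bigoplus_{M} M \otimes (M^*)^{G'}$ as a left $G$-module, so that $h_0(M)=\dim\big((M^*)^{G'}\big)$. Since $G' \cong GL_{n-N}$ is reductive, the natural map from invariants to coinvariants is an isomorphism, whence $(M^*)^{G'} \cong (M^{G'})^*$ and $\dim\big((M^*)^{G'}\big)=\dim(M^{G'})$; this gives the claimed formula.

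I expect the only points requiring genuine care to be the reductivity argument identifying $\dim\big((M^*)^{G'}\big)$ with $\dim(M^{G'})$, and the verification that the $G$-action on $F$ relevant for the invariant Hilbert scheme is indeed the translation action to which Peter-Weyl applies (rather than a twist by an automorphism). Both are standard, so the essential obstacle here is bookkeeping rather than anything conceptual; the substance of the computation was already carried out in Proposition \ref{fibreUnsymp1}.
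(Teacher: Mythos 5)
Your proposal is correct and is exactly the argument the paper intends: the corollary is stated without proof as an immediate consequence of Proposition \ref{fibreUnsymp1}, the implicit reasoning being precisely the algebraic Peter--Weyl decomposition of $\CC[G]$ and the identification $\CC[G/G']=\CC[G]^{G'}$, with reductivity of $G'$ giving $\dim\bigl((M^*)^{G'}\bigr)=\dim(M^{G'})$. Your attention to the two standard subtleties (the translation action and the duality step) is appropriate but does not depart from the paper's route.
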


If $m <2n$ and $m$ is odd, then the situation is more complicated (except the case $m=1$ which is trivial) because the general fibers of the quotient morphism $\nu$ are reducible. From now on, we will only consider the cases where either $m \geq 2n$ or $m <2n$, $m$ is even.

\subsection{The reduction principle for the main component} \label{MropRRED}
In this section we prove our most important theoretical result, which is the \textit{reduction principle} (Proposition \ref{reduction3}). Let us mention that a similar reduction principle (but in a different setting) was already obtained in \cite{Terp1}.

The subvariety $\mu^{-1}(0) \subset W$ being $G \times H$-stable, it follows from \cite[Lemma 3.3]{Br} that the invariant Hilbert scheme
\begin{equation*}
\HH:=\Hilb_{h_0}^{G}(\mu^{-1}(0)) 
\end{equation*}
is a $H$-stable closed subscheme of $\Hilb_{h_0}^G(W)$. We denote by $\HHp$ the main component of $\HH$. 
The scheme $\Hilb_{h_0}^G(W)$ was studied in \cite{Terp1}; let us recall
\begin{proposition} \label{moorppgr}  
\emph{(\cite[\S 4.4]{Terp1})} Let $h_0$ be the Hilbert function given by Corollary \ref{fctH2}, and let $H=GL(V')$ acting naturally on $\Gr(m-h_0(V),V'^*) \times \Gr(m-h_0(V^*),V')$. 
Then there exists a $H$-equivariant morphism
$$
\rho :\  \Hilb_{h_0}^G(W)  \rightarrow  \Gr(m-h_0(V),V'^*) \times \Gr(m-h_0(V^*),V')
$$
given on closed points by $[Z]  \mapsto  (\Ker(f_Z^1),\ \Ker(f_Z^2))$, where $f_Z^1:\ V'^* \cong \Mor^G(W,V) \rightarrow \Mor^G(Z,V)$ and $f_Z^2:\ V' \cong \Mor^G(W,V^*) \rightarrow \Mor^G(Z,V^*)$ are the restriction maps. 
\end{proposition}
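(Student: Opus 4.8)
The plan is to realize $\rho$ through the universal family and the universal property of the Grassmannian, reducing the whole statement to the surjectivity of a single map of vector bundles. Write $\mathcal{X}:=\Hilb_{h_0}^G(W)$, let $\pi:\ \mathcal{Z} \rightarrow \mathcal{X}$ be the universal family (with $\mathcal{Z} \subset \mathcal{X} \times W$ flat over $\mathcal{X}$), and set $\mathcal{A}:=\pi_*\OO_{\mathcal{Z}}$, a $G$-linearized sheaf of $\OO_{\mathcal{X}}$-algebras. By the Alexeev--Brion theory \cite{AB,Br}, flatness of $\pi$ together with the constancy of the Hilbert function $h_0$ on the fibers implies that for each $M \in \Irr(G)$ the multiplicity sheaf $\underline{\Hom}_G(M,\mathcal{A})$ is locally free of rank $h_0(M)$ and commutes with base change. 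In particular $\FF^1:=\underline{\Mor}^G(\mathcal{Z}/\mathcal{X},V)=\underline{\Hom}_G(V^*,\mathcal{A})$ is a vector bundle of rank $h_0(V)$ with fiber $\Mor^G(Z,V)$ at $[Z]$, and similarly $\FF^2$ has rank $h_0(V^*)$ and fiber $\Mor^G(Z,V^*)$. The $G$-equivariant relative restriction $\CC[W]\otimes_{\CC}\OO_{\mathcal{X}} \twoheadrightarrow \mathcal{A}$ induces, on $V$-covariants, an $\OO_{\mathcal{X}}$-linear map whose fiber at $[Z]$ is exactly $f_Z^1$; via the identification $\Mor^G(W,V)\cong V'^*$ this reads $\phi^1:\ V'^*\otimes_{\CC}\OO_{\mathcal{X}} \rightarrow \FF^1$. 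Define $\phi^2$ analogously.

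The heart of the proof, and the step I expect to be the \textbf{main obstacle}, is to show that $\phi^1$ is surjective, i.e. that $f_Z^1$ has rank $h_0(V)$ for \emph{every} $[Z]\in\mathcal{X}$ (not just on the dense locus of general fibers, where it is clear). I would argue in two moves. First, by the first fundamental theorem of invariant theory for $GL(V)$ -- for which there is no invariant volume form, hence no determinantal covariants -- every $G$-equivariant morphism $W\rightarrow V$ is a $\CC[W]^G$-linear combination of the tautological covariants built from the entries of $u_1$; that is, the module of $V$-covariants of $W$ is generated over $\CC[W]^G$ by its finite-dimensional linear part, identified with $V'^*$. Second, every $[Z]\in\mathcal{X}$ satisfies $\CC[Z]^G=\CC$: the Hilbert function $h_0$ assigns the value $1$ to the trivial representation because, by Proposition \ref{fibreUnsymp1}, the general fiber of $\nu$ is a single $G$-orbit and so carries only constant invariants (this is precisely what makes $\gamma$ land in points of $W/\!/G$). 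Combining the two, the $\CC[W]^G$-coefficients of any $V$-covariant restrict to scalars on $Z$, so every element of $\Mor^G(Z,V)$ is a $\CC$-linear combination of the restricted tautological covariants; hence $f_Z^1$ is surjective. The identical argument handles $f_Z^2$.

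Granting surjectivity, $\phi^1$ is a surjection of vector bundles onto the rank-$h_0(V)$ bundle $\FF^1$, so $\Ker(\phi^1)$ is a subbundle of $V'^*\otimes\OO_{\mathcal{X}}$ of constant rank $m-h_0(V)$. By the universal property of the Grassmannian this subbundle is classified by a morphism $\rho_1:\ \mathcal{X}\rightarrow\Gr(m-h_0(V),V'^*)$ with $\rho_1([Z])=\Ker(f_Z^1)$, and likewise one obtains $\rho_2:\ \mathcal{X}\rightarrow\Gr(m-h_0(V^*),V')$; set $\rho:=(\rho_1,\rho_2)$. Finally, $H=GL(V')$ acts on $\mathcal{X}$ (it acts on $W$ commuting with $G$, cf. \cite[Lemma 3.3]{Br}) and on the two Grassmannians through its standard actions on $V'^*$ and $V'$. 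All the ingredients -- the universal family, the relative restriction map, and the $H$-equivariant identifications $\Mor^G(W,V)\cong V'^*$ and $\Mor^G(W,V^*)\cong V'$ -- are natural for this $H$-action, so $\phi^1$ and $\phi^2$ are maps of $H$-linearized bundles and $\rho$ is $H$-equivariant, as claimed.
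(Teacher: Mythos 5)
The paper itself contains no proof of this proposition: it is imported by citation from \cite[\S 4.4]{Terp1}, so the only meaningful comparison is with that reference, and your argument is correct and in substance reconstructs the construction given there --- locally free covariant sheaves $\FF_M$ of rank $h_0(M)$ attached to the universal family, fiberwise surjectivity of the tautological map $\phi^1$ from the trivial bundle, and the universal property of the Grassmannian applied to the kernel subbundle, with $H$-equivariance coming from naturality. Your identification of the crux is also right: surjectivity of $f_Z^1$ at \emph{every} point follows from generation of the $V$-covariants of $W$ in degree one over $\CC[W]^G$ (FFT for $GL(V)$) together with $h_0(\mathrm{trivial})=1$, which forces $\CC[Z]^G=\CC$ for all $[Z]$ (this value of $h_0$ can be read off directly from Corollary \ref{fctH2}, without invoking Proposition \ref{fibreUnsymp1}). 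One step should be made explicit: your conclusion that every element of $\Mor^G(Z,V)$ is a $\CC$-linear combination of restricted tautological covariants presupposes that the restriction map $\Mor^G(W,V)\rightarrow\Mor^G(Z,V)$ is surjective, i.e.\ that every covariant on $Z$ extends to $W$; this is immediate because $G$ is reductive (apply exactness of the functor of $G$-invariants to the surjection $\CC[W]\otimes V\twoheadrightarrow\CC[Z]\otimes V$), but without this line the deduction is incomplete.
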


By Corollary \ref{fctH2}, we have $h_0(V)=h_0(V^*)=N$. We identify $\Gr(m-N,V'^*)$ with $\Gr(N,V')$, and we denote
$$A_i:=\{(L_1,L_2) \in \Gr(N,V') \times \Gr(m-N,V')\ |\ \dim(L_1 \cap L_2)=N-i\}, \text{ for } i=0, \ldots, N.$$
The $A_i$ are the $N+1$ orbits for the action of $H$ on $\Gr(N,V') \times \Gr(m-N,V')$, and 
$$A_0=\overline{A_0} \subset \overline{A_1} \subset \cdots \subset \overline{A_N}=\Gr(N,V') \times \Gr(m-N,V').$$ 
In particular, $A_N$ is the unique open orbit and 
\begin{equation}  \label{flagvar}
A_0=\FF_{N,m-N}:=\{(L_1,L_2) \in \Gr(N,V') \times \Gr(m-N,V')\ |\ L_1 \subset L_2\},
\end{equation}
which is a partial flag variety, is the unique closed orbit. Let
\begin{itemize} \renewcommand{\labelitemi}{$\bullet$}
\item $a_0:=(L_1,L_2) \in A_0$, and $P$ the parabolic subgroup of $H$ stabilizing $a_0$;
\item $W':=\{ (u_1,u_2) \in W\ |\ L_2 \subset \Ker(u_1) \text{ and } \Im(u_2) \subset L_1 \}$, which is a $G \times P$-module contained in $\mu^{-1}(0)$; and 
\item $\HH':=\Hilb_{h_0}^{G}(W')$, and $\HH'^{\mathrm{main}}$ its main component.
\end{itemize} 
If either $m \geq 2n$ or $m<2n$, $m$ even, then $h_0$ coincides with the Hilbert function of the general fibers of the quotient morphism $W' \rightarrow W'/\!/G$ by \cite[Proposition 4.13]{Terp1}; in particular, $\HH'^{\mathrm{main}}$ is well-defined. We are going to prove 

\begin{proposition} \label{reduction3}
If either $m \geq 2n$ or $m<2n$, $m$ even, and with the above notation, there is a $H$-equivariant isomorphism
$$ \HHp \cong H {\times}^{P} \HH'^{\mathrm{main}}.$$ 
\end{proposition}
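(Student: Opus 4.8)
The plan is to use the $H$-equivariant morphism $\rho$ of Proposition \ref{moorppgr} to exhibit $\HHp$ as a homogeneous fiber bundle over the closed $H$-orbit $A_0\cong H/P$, and then to identify the fiber over $a_0$ with $\HH'^{\mathrm{main}}$. First I would analyze $\rho$ on $\HHp$. Since $\rho$ is $H$-equivariant and $\HHp$ is irreducible and $H$-stable, $\rho(\HHp)$ is an irreducible $H$-stable subset, hence the closure of a single orbit $A_i$. To pin down $i$, I would compute $\rho$ on a general point of $\HHp$, namely on $Z=\overline{G.(u_1^N,u_2^N)}$ (the closure of a general $G$-orbit, by Proposition \ref{fibreUnsymp1}), using the explicit representative (\ref{ll1}). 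There one reads off $\Im(u_2^N)=\langle e_1,\dots,e_N\rangle$ and $\Ker(u_1^N)=\langle e_1,\dots,e_{m-N}\rangle$, which are nested; using the description of the restriction maps $f_Z^1,f_Z^2$ from Proposition \ref{moorppgr}, their kernels are the corresponding subspaces, so $\rho(Z)\in A_0$. As $\rho^{-1}(A_0)$ is closed and contains the dense subset of $\HHp$ parametrizing closures of general orbits, I conclude $\rho(\HHp)=A_0\cong H/P$, and I take the base point $a_0:=\rho(Z)$.

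Next I would invoke the standard equivalence between $H$-schemes equipped with an $H$-equivariant morphism to $H/P$ and $P$-schemes: since $P$ is parabolic, $H\to H/P$ is a Zariski-locally trivial principal $P$-bundle, so the restriction of $\rho$ yields a canonical $H$-equivariant isomorphism $\HHp\cong H\times^{P}F$, where $F:=\rho^{-1}(a_0)\cap\HHp$ is the $P$-stable fiber. It then remains to identify $F$ with $\HH'^{\mathrm{main}}$.

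The main step is to show $\rho^{-1}(a_0)\cap\HH\cong\HH'$ as $P$-schemes. The inclusion of the linear $G\times P$-submodule $W'\subset\mu^{-1}(0)$ induces, by \cite[Lemma 3.3]{Br}, a closed immersion $\HH'=\Hilb_{h_0}^{G}(W')\hookrightarrow\HH$, whose image consists of those $[Z]$ with $Z\subset W'$ scheme-theoretically. Because $W'$ is the linear subspace cut out precisely by the $G$-morphisms in $\Mor^{G}(W,V)$ resp. $\Mor^{G}(W,V^*)$ indexed by the annihilators of $L_2$ resp. $L_1$, the condition $Z\subset W'$ is equivalent to the vanishing of those morphisms on $Z$, i.e. to $\Ker(f^1_Z)$ and $\Ker(f^2_Z)$ containing these annihilators; since $\dim\Ker(f^i_Z)=m-N$ is forced by $h_0(V)=h_0(V^*)=N$ (Corollary \ref{fctH2}), containment becomes equality, so $Z\subset W'\iff\rho([Z])=a_0$. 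Checking this equivalence for flat families over an arbitrary base identifies $\rho^{-1}(a_0)\cap\HH$ with $\HH'$ as functors, hence as $P$-schemes. I expect this functorial, scheme-theoretic matching --- verifying that the fiber scheme structure of $\rho$ agrees with the Hilbert functor of $W'$, and that the whole identification is $P$-equivariant --- to be the main obstacle.

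Finally I would identify $F=\HH'^{\mathrm{main}}$. From $\HHp\cong H\times^{P}F$ and the irreducibility of $\HHp$, the fiber $F$ is irreducible, and under the isomorphism of the previous step it is a closed irreducible subvariety of $\HH'$. The generic point of $\HHp$ corresponds to $\overline{G.(u_1^N,u_2^N)}$, and $(u_1^N,u_2^N)\in W'$ for the base point $a_0$ (as one checks directly from (\ref{ll1})); hence the corresponding point of $\rho^{-1}(a_0)\cap\HH\cong\HH'$ is the closure of a general $G$-orbit in $W'$, that is, the generic point of $\HH'^{\mathrm{main}}$. Irreducibility of $F$ then gives $F\subseteq\HH'^{\mathrm{main}}$, and a dimension count closes the argument: since $W'\cong\Hom(V'/L_2,V)\times\Hom(V,L_1)$ one has $W'/\!/G\cong\gl_{N}$, so $\dim\HH'^{\mathrm{main}}=N^2$, while $\dim H/P=N(2m-3N)$; their sum is $2N(m-N)=\dim\HHp$, forcing $\dim F=\dim\HH'^{\mathrm{main}}$ and hence $F=\HH'^{\mathrm{main}}$. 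Combining this with the bundle isomorphism yields $\HHp\cong H\times^{P}\HH'^{\mathrm{main}}$.
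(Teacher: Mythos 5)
Your proposal is correct and follows essentially the same route as the paper: show that $\rho$ maps $\HHp$ into the closed orbit $A_0$, use the resulting homogeneous-bundle structure $\HHp \cong H \times^P F$, identify the fiber of $\rho_{|\HH}$ over $a_0$ with $\HH'$ (the paper's Lemma \ref{fibrehil2s}, proved there by reference to \cite[Lemma 3.7]{Terp1}), and conclude $F = \HH'^{\mathrm{main}}$ from the dimension count $\dim F = \dim \HHp - \dim A_0 = N^2 = \dim \HH'^{\mathrm{main}}$. The only real difference is in the first step: where you compute $\rho$ explicitly on a general orbit closure via (\ref{ll1}), the paper's Lemma \ref{versX0} argues that $\rho([Z_N])$ must be the unique fixed point of $\Stab_H(f_N)$ in $\Gr(N,V') \times \Gr(m-N,V')$, which lies in $A_0$; both arguments then finish identically, using closedness of $\rho^{-1}(A_0)$ and density of $\gamma^{-1}(U_N)$ in $\HHp$.
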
 

First of all, we need

\begin{lemme}  \label{versX0}
If either $m \geq 2n$ or $m<2n$ with $m$ even, then the morphism $\rho$ of Proposition \ref{moorppgr} sends $\HHp$ onto $A_0$, the $H$-variety defined by (\ref{flagvar}).  
\end{lemme}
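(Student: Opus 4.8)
The plan is to show that the image $\rho(\HHp)$ is exactly the closed orbit $A_0 = \FF_{N,m-N}$. Since $\rho$ is $H$-equivariant by Proposition \ref{moorppgr}, and the $A_i$ are precisely the $H$-orbits on $\Gr(N,V') \times \Gr(m-N,V')$ with $A_0$ the unique closed one, the image $\rho(\HHp)$ is an $H$-stable, hence $H$-saturated, subset. The main component $\HHp$ is irreducible (being the closure of $\gamma^{-1}(U)$), so its image under the continuous map $\rho$ is irreducible; moreover $\HHp$ is a projective variety, so $\rho(\HHp)$ is closed. An irreducible closed $H$-stable subset of $\Gr(N,V') \times \Gr(m-N,V')$ is one of the orbit closures $\overline{A_i}$, and to pin it down to $A_0 = \overline{A_0}$ it suffices to exhibit a single point of $\HHp$ mapping into $A_0$ and to show that no point maps into the larger strata — equivalently, that the \emph{generic} point of $\HHp$ maps to $A_0$.

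The concrete way to carry this out is to analyze $\rho$ on the dense open locus $\gamma^{-1}(U) \subset \HHp$, where $U$ is the flat locus and over which $\gamma$ is an isomorphism onto $U$. A point of $\gamma^{-1}(U)$ corresponds to a general fiber $\nu^{-1}(f)$ of the quotient morphism, and I would compute $\rho$ explicitly on such a fiber using the description of $f^1_Z$ and $f^2_Z$ as restriction maps. Concretely, taking $f = f_N$ (or a generic $f \in U_N$) as in (\ref{ll2}) and using the explicit closed orbit $G.(u_1^N,u_2^N)$ from the proof of Proposition \ref{fibreUnsymp1}, one reads off the kernels: the map $f^1_Z$ sees where the covariants $V'^* \cong \Mor^G(W,V)$ degenerate on $Z$, and for the fiber over $f_N$ these kernels should be forced to be $\Im(u_2)$ and $\Ker(u_1)$ respectively. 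The key algebraic point is that on the zero fiber one always has $\Im(u_2) \subset \Ker(u_1)$, so the two subspaces $\Ker(f^1_Z)$ and $\Ker(f^2_Z)$, of dimensions $N$ and $m-N$, satisfy the incidence relation $L_1 \subset L_2$ defining $A_0$. Thus every point of $\HH$ lying over the zero fiber (and in particular the dense locus of $\HHp$) maps into $A_0$.

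I expect the main obstacle to be the precise identification of the kernels $\Ker(f^1_Z)$ and $\Ker(f^2_Z)$ with the geometric data $\Im(u_2^N)$ and $\Ker(u_1^N)$, i.e.\ verifying that the restriction maps $f^1_Z, f^2_Z$ degenerate exactly on the expected subspaces and no more. This requires unwinding the definition of the covariants $\Mor^G(Z,V)$ and $\Mor^G(Z,V^*)$ on the specific subscheme $Z$ and checking that $\dim \Ker(f^1_Z) = N = h_0(V)$ and $\dim \Ker(f^2_Z) = m - h_0(V^*)$ are attained with the inclusion $\Ker(f^1_Z) \subset \Ker(f^2_Z)$ holding. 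Once the inclusion is established on the dense open locus $\gamma^{-1}(U)$, irreducibility of $\HHp$ and closedness of $A_0$ together force $\rho(\HHp) \subset \overline{A_0} = A_0$; the reverse surjectivity onto $A_0$ then follows from $H$-equivariance, since $\rho(\HHp)$ is $H$-stable and nonempty inside the single orbit $A_0$. I would organize the write-up so that the computation on the canonical point over $f_N$ does the real work, and the global statement drops out by the equivariance-plus-irreducibility argument above.
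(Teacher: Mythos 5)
Your proof is correct in substance, and its key step is genuinely different from the paper's. Both arguments share the same skeleton: work on the dense locus $\gamma^{-1}(U_N) \subset \HHp$, show its image under $\rho$ lies in $A_0$, and conclude because $\rho^{-1}(A_0)$ is closed and contains this dense locus, surjectivity onto the single closed orbit $A_0$ then coming from $H$-equivariance. Where you differ is in how the image of $\gamma^{-1}(U_N)$ is pinned down. You compute $\rho$ explicitly: for $[Z] \in \gamma^{-1}(U_N)$ the subscheme $Z$ is the closed $G$-orbit of a pair $(u_1,u_2)$ with $u_2 \circ u_1 = f$ generic; a linear covariant of type $V$ resp. $V^*$ vanishes on $Z$ if and only if it vanishes at $(u_1,u_2)$, so the two kernels are $\Ker(u_1)$ and the annihilator of $\Im(u_2)$, and after the identification $\Gr(m-N,V'^*)\cong\Gr(N,V')$ the incidence $L_1 \subset L_2$ defining $A_0$ is literally the equation $u_1 \circ u_2 = 0$ cutting out $\mu^{-1}(0)$. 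The paper avoids any computation with covariants: it fixes $f_N \in U_N$, observes that the unique point $[Z_N]$ over $f_N$ is stable under $Q = \Stab_H(f_N)$, hence $\rho([Z_N])$ is a $Q$-fixed point, and checks that $\Gr(N,V') \times \Gr(m-N,V')$ has a unique $Q$-fixed point, which lies in $A_0$. Your route costs the verification you yourself flag (identifying the kernels exactly), but it buys more information: it describes $\rho$ on $\gamma^{-1}(U_N)$ as $(u_1,u_2) \mapsto (\Im(u_2), \Ker(u_1))$ and explains geometrically why the incidence holds; the paper's fixed-point argument is softer and shorter.

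Two blemishes, neither fatal. First, $\HHp$ is \emph{not} projective (it maps birationally onto the affine variety $\mu^{-1}(0)/\!/G$; only the Hilbert--Chow morphism is projective), so you cannot deduce that $\rho(\HHp)$ is closed; fortunately your concluding paragraph argues instead with the closed subset $\rho^{-1}(A_0)$ containing the dense locus, which is sound. Second, your claim that \emph{every} point of $\HH$ lying over the zero fiber maps into $A_0$ is false: the kernel computation applies only when $Z$ is (the closure of) a single orbit $G.(u_1,u_2)$ with the expected kernels, not to an arbitrary $G$-stable subscheme of $\mu^{-1}(0)$ with Hilbert function $h_0$. Indeed, the paper's reducibility results (Lemma \ref{fibrehil3}, Propositions \ref{pppfixe} and \ref{casSympn1}) rest precisely on exhibiting points of $\HH$ that $\rho$ sends to the \emph{open} orbit $A_n$. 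Since the lemma only needs the statement on $\gamma^{-1}(U_N)$, your proof survives once this overstatement is deleted.
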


\begin{proof}
As the quotient morphism $\nu:\ \mu^{-1}(0) \rightarrow \mu^{-1}(0)/\!/G$ is flat over the open orbit $U_N$, the restriction of the Hilbert-Chow morphism $\gamma$ to $\gamma^{-1}(U_N)$ is an isomorphism. We fix $f_N \in U_N$, and we denote $Q:=\Stab_{H}(f_N)$, and $[Z_N]$ the unique point of $\HH$ such that $\gamma([Z_N])=f_N$. As $\gamma$ is $H$-equivariant, $[Z_N]$ is $Q$-stable. In addition, $\rho$ is also $H$-equivariant, hence $\rho([Z_N])$ is a fixed point for the action of $Q$. But one may check that $\Gr(N,V') \times \Gr(m-N,V')$ has a unique fixed point for $Q$, which is contained in $A_0$. Then, as $A_0$ is $H$-stable, we have $\rho([Z]) \in A_0$, for every $[Z] \in \gamma^{-1}(U_N)$. Hence, $\rho^{-1}(A_0)$ is a closed subscheme of $\Hilb_{h_0}^{G}(W)$ containing $\gamma^{-1}(U_N)$, and the result follows.
\end{proof}

The restriction $\rho_{|\HHp}:\ \HHp \rightarrow A_0$ is $H$-equivariant, hence $\HHp$ is the total space of a $H$-homogeneous fiber bundle over $A_0$. Let $F$ be the scheme-theoretic fiber of $\rho_{|\HHp}$ over $a_0$. The action of $P$ on $\HHp$, induced by the action of $H$, stabilizes $F$, and there is a $H$-equivariant isomorphism   
\begin{equation} \label{ut1}
\HHp \cong H {\times}^{P} F.
\end{equation}
Hence, to prove Proposition \ref{reduction3}, we have to determine $F$ as a $P$-scheme. We start by considering $F'$, the scheme-theoretic fiber of the restriction $\rho_{|\HH}:\ \HH \rightarrow  \Gr(N,V') \times \Gr(m-N,V')$ over $a_0$, as a $P$-scheme. The proof of the next lemma is analogous to the proof of \cite[Lemma 3.7]{Terp1}.

\begin{lemme} \label{fibrehil2s}
With the above notation, there is a $P$-equivariant isomorphism 
$$F' \cong \HH',$$
where $P$ acts on $\HH'$ via its action on $W'$. 
\end{lemme}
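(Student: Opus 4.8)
The plan is to prove the $P$-equivariant isomorphism $F' \cong \HH'$ by identifying both sides with moduli of $G$-stable subschemes subject to the same incidence conditions. Recall that $F'$ is the scheme-theoretic fiber over $a_0 = (L_1, L_2)$ of the restriction $\rho_{|\HH}:\ \HH \rightarrow \Gr(N,V') \times \Gr(m-N,V')$, where $\rho$ sends $[Z]$ to the pair $(\Ker(f_Z^1), \Ker(f_Z^2))$ of Proposition \ref{moorppgr}. So a closed point $[Z] \in F'$ corresponds to a $G$-stable closed subscheme $Z \subset \mu^{-1}(0)$, with the right Hilbert function $h_0$, whose associated kernels are exactly $L_1$ and $L_2$. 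My first step is to interpret these two kernel conditions geometrically: identifying $V'^* \cong \Mor^G(W,V)$ and $V' \cong \Mor^G(W,V^*)$, the condition $\Ker(f_Z^1) = L_1$ (after the identification $\Gr(m-N,V'^*) \cong \Gr(N,V')$) and $\Ker(f_Z^2) = L_2$ should translate precisely into the statement that $Z$ is supported on the linear subspace $W' = \{(u_1,u_2) \in W\ |\ L_2 \subset \Ker(u_1),\ \Im(u_2) \subset L_1\}$. This is the heart of the matter.

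Concretely, I would argue that a $G$-equivariant morphism $\phi \in \Mor^G(W,V)$ corresponding to a vector $v \in V'^*$ restricts to zero on $Z$ exactly when the corresponding linear coordinate vanishes on the support of $Z$; running this over the subspace $L_1 \subset V'^*$ (viewed inside $V'^*$) forces $Z$ to land in the locus cut out by those coordinates, and symmetrically for $L_2$ via $f_Z^2$. The upshot is a canonical bijection between $G$-stable subschemes $Z \subset \mu^{-1}(0)$ in the fiber $F'$ and $G$-stable subschemes $Z \subset W'$, i.e. closed points of $\HH' = \Hilb_{h_0}^G(W')$. Since the excerpt tells me the proof is analogous to \cite[Lemma 3.7]{Terp1}, I would follow that template: first establish the set-theoretic support statement, then upgrade it to an identification of the full subscheme structure (checking that the defining ideal of $Z$ contains the ideal of $W'$), and finally promote the bijection on closed points to an isomorphism of schemes by exhibiting it functorially on $T$-points for an arbitrary base $T$, so that the universal families match.

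For the $P$-equivariance, I would use that $P = \Stab_H(a_0)$ stabilizes both $L_1$ and $L_2$, hence preserves the subspace $W'$ and acts on it; this is exactly the action of $P$ on $\HH'$ referred to in the statement. Since the entire construction of the bijection is built from the $H$-equivariant data of $\rho$ and the restriction maps $f_Z^i$, the identification $F' \cong \HH'$ is automatically compatible with the residual $P$-action once one checks that $P$ acts on the ambient $W'$ through its action on $(L_1,L_2)$ and on $W$. I would verify this by tracing how $p \in P$ transforms the kernel data and confirming the isomorphism intertwines the two $P$-actions.

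The main obstacle I anticipate is the scheme-theoretic (as opposed to set-theoretic) identification of $F'$ with $\HH'$: showing not merely that the closed points correspond, but that the fiber of $\rho_{|\HH}$ over $a_0$, taken with its natural scheme structure inherited from $\HH$, is isomorphic as a scheme to $\Hilb_{h_0}^G(W')$. This requires controlling the ideal-theoretic meaning of the conditions $\Ker(f_Z^1) = L_1$ and $\Ker(f_Z^2) = L_2$ in flat families over an arbitrary base, and checking that imposing these kernel equalities is equivalent, at the level of universal ideal sheaves, to requiring the family to be supported on $W'$. The reference to the analogous \cite[Lemma 3.7]{Terp1} suggests the correct functorial formalism is already in place; the work is to transpose that argument to the present subspace $W'$ and incidence pair $(L_1, L_2)$, taking care that the inclusion $L_1 \subset L_2$ defining $a_0 \in A_0$ is exactly what makes $W'$ a $G \times P$-module contained in $\mu^{-1}(0)$.
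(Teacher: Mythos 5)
Your proposal is correct and follows essentially the same route as the paper, which itself gives no details and simply invokes the analogous \cite[Lemma 3.7]{Terp1}: one translates the kernel conditions defining the fiber of $\rho_{|\HH}$ over $a_0$ into the scheme-theoretic containment $Z \subseteq W'$ (equivalently, $I(Z)$ contains the linear forms cutting out $W'$, the equality of kernels versus mere containment being automatic since $\rho$ takes values in a fixed Grassmannian product), then upgrades this to an isomorphism of schemes functorially on the universal families, with $P$-equivariance coming from $P = \Stab_H(a_0)$ preserving $W'$. The only caveat is that ``supported on $W'$'' should throughout be read as ideal-theoretic containment (as you do later), since vanishing in $\CC[Z]$ is stronger than vanishing on the underlying set when $Z$ is non-reduced.
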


As $\HHp$ is an irreducible variety of dimension $2N(m-N)$, we deduce from (\ref{ut1}) that $F$ is an irreducible variety of dimension $N^2$. By Lemma \ref{fibrehil2s}, the fiber $F$ is isomorphic to a subvariety of $\HH'^{\mathrm{main}}$, but $\dim(\HH'^{\mathrm{main}})=N^2$, and thus there is a $P$-equivariant isomorphism 
\begin{equation}  \label{moniox2}
F \cong \HH'^{\mathrm{main}},
\end{equation}
and Proposition \ref{reduction3} follows.

\begin{remarque}
The scheme $\HH'$ is $P$-stable and identifies with a closed subscheme of $\HH$, hence there is an inclusion of $H$-schemes $H \times^P \HH' \subset \HH$. 
\end{remarque}

\subsection{Proofs of Theorems A and B for \texorpdfstring{$GL(V)$}{GL(V)}}  \label{proofGLn}
Our strategy to prove Theorems A and B is the following: first we perform a reduction step (Proposition \ref{reduction3}), then we use \cite[\S 1, Theorem]{Terp1} to identify $\Hilb_{h_{W'}}^{G}(W')^{\mathrm{main}}$, and finally we compare the Hilbert-Chow morphism $\gamma:\ \Hilb_{h_0}^{G}(\mu^{-1}(0))^{\mathrm{main}} \rightarrow \mu^{-1}(0)/\!/G$ with the Springer desingularizations of $\mu^{-1}(0)/\!/G$. Let us start by recalling

\begin{theoreme} \emph{(\cite[\S 1, Theorem]{Terp1})} \label{Terpy}
Let $G=GL(V)$, let $W=\Hom(V',V) \times \Hom(V,V')$, and let $h_W$ be the Hilbert function of the general fibers of the quotient morphism $W \rightarrow W/\!/G$. We denote $n:=\dim(V)$, $m:=\dim(V')$, and by $Y_0$ the blow-up of $W/\!/G=\End(V')^{\leq n}:=\{ f \in \End(V')\ |\ \rk(f) \leq n\}$ at $0$. In the following cases, the invariant Hilbert scheme $\HH':=\Hilb_{h_W}^{G}(W)$ is a smooth variety and the Hilbert-Chow morphism is the succession of blow-up described as follows:
\begin{itemize}
\item if $n \geq 2m-1$, then $\HH'\cong W/\!/G=\End(V')$;
\item if $m>n=1$ or $m=n=2$, then $\HH' \cong Y_0$;
\item if $m>n=2$, then $\HH'$ is isomorphic to the blow-up of $Y_0$ along the strict transform of $\End(V')^{\leq 1}$.
\end{itemize}
\end{theoreme}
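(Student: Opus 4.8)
The plan is to study $\HH'=\Hilb_{h_W}^{G}(W)$ through the isotypic decomposition of the coordinate rings of its points, exploiting the residual action of $H=GL(V')$ which commutes with $G$ and hence makes both $\HH'$ and $W/\!/G=\End(V')^{\leq n}$ into $H$-varieties with $\gamma$ and $\nu$ $H$-equivariant. First I would pin down $h_W$ concretely: the general fiber of $W\rightarrow W/\!/G$ over a point of maximal rank is a single closed $G$-orbit $G/G_x$, so $h_W(M)=\dim(M^{G_x})$ for the generic stabilizer $G_x$, and the coarser rank stratification of $\End(V')^{\leq n}$ controls the degenerations. Using the first fundamental theorem for $GL(V)$ to decompose $\CC[W]$ as a $G\times H$-module, every $G$-stable ideal $I\subset\CC[W]$ with Hilbert function $h_W$ is encoded by its modules of covariants $\Mor^G(Z,M)$ together with the $G$-equivariant multiplication maps relating them; this converts the moduli functor into finite-dimensional linear-algebra data carried by a small set of generating covariants. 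In particular it produces an $H$-equivariant morphism $\rho$ from $\HH'$ to a product of Grassmannians recording the kernels of the restriction maps $\Mor^G(W,V)\rightarrow\Mor^G(Z,V)$ and $\Mor^G(W,V^*)\rightarrow\Mor^G(Z,V^*)$, exactly in the spirit of Proposition~\ref{moorppgr}.

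The next step is to identify $\HH'$ with the asserted blow-ups, stratum by stratum over $\End(V')^{\leq n}$. Over the open rank-$n$ stratum $\nu$ is flat, so $\gamma$ restricts to an isomorphism there; the content lies over the deeper strata $\End(V')^{\leq k}$. Combining $\gamma$ with $\rho$ gives an $H$-equivariant morphism from $\HH'$ to the closure of the graph of the rational kernel-and-image map from $\End(V')^{\leq n}$ to the Grassmannian factors, and this closure is precisely a blow-up of $\End(V')^{\leq n}$ along its singular strata. In the stable range $n\geq 2m-1$ the modules of covariants are free over $\CC[W]^G$, the universal family over $W/\!/G=\End(V')$ is flat with Hilbert function $h_W$ everywhere, and $\gamma$ is an isomorphism. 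When $n=1$ the kernel map blows up only the origin, giving $\HH'\cong Y_0$; when $n=2<m$ the determinantal singularity of $\End(V')^{\leq 2}$ along $\End(V')^{\leq 1}$ must be resolved in a second step, yielding the blow-up of $Y_0$ along the strict transform of $\End(V')^{\leq 1}$. I would verify that each of these morphisms is an isomorphism, rather than merely birational, by writing the universal family explicitly over the charts of the blow-up and invoking the universal property of the invariant Hilbert scheme.

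Smoothness I would establish by a tangent-space computation at the most degenerate point. Since $\gamma$ and $\rho$ are $H$-equivariant and the constructions above are homogeneous along each $H$-orbit stratum, it suffices to check smoothness at the $G\times H$-stable subscheme $Z_0$ lying over $0\in W/\!/G$, where the Zariski tangent space is $\Hom^G(I_0/I_0^2,\CC[Z_0])$. I would compute its dimension from the $G$-isotypic decomposition of the conormal module and show it equals the value $\dim(\HH')$ predicted by the blow-up picture; $H$-homogeneity then propagates smoothness along every stratum. The case $m=n=2$ is the instructive one: here $\End(V')^{\leq 2}=\End(V')$ is already smooth, yet $Z_0$ carries extra second-order deformations that the covariant computation detects, forcing $\gamma$ to blow up the origin and producing $\HH'\cong Y_0$ rather than $\HH'\cong\End(V')$.

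The hard part will be precisely this tangent-space computation at $Z_0$, together with ruling out extra irreducible components. One must control the $G$-equivariant quadratic relations, i.e. decompose $I_0/I_0^2$ into $G$-isotypic pieces and pair it against $\CC[Z_0]$; this count is sharply sensitive to the numerical relation between $m$ and $n$ and is exactly what separates the three cases (and what makes the scheme singular or reducible outside the listed range). The second delicate point is to show that the morphism to the blow-up is an isomorphism and not a small contraction or a finite cover, which I expect to rest on the explicit freeness of the relevant covariant modules on each affine chart of the successive blow-ups.
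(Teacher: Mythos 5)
This theorem is not proved in the present paper at all: it is recalled verbatim from \cite[\S 1, Theorem]{Terp1}, and the paper uses it as a black box (together with the reduction principle of Proposition \ref{reduction3}) to identify $\HHp$. So there is no internal proof to compare your attempt against; the relevant benchmark is the method of \cite{Terp1}.

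Against that benchmark, your outline is essentially the right reconstruction: determining $h_W$ via the principal isotropy group, encoding $G$-stable ideals by their modules of covariants using the first fundamental theorem, producing the $H$-equivariant morphism to Grassmannians (this is exactly Proposition \ref{moorppgr}, itself quoted from \cite[\S 4.4]{Terp1}), identifying the image of $\gamma\times\rho$ with a blow-up, and handling the range $n\geq 2m-1$ by flatness of the quotient morphism, which holds precisely in that range by \cite[Corollary 4.12]{Terp1}. One step of your plan is stated too loosely, however: $H$-homogeneity alone cannot propagate smoothness from a single point over $0\in W/\!/G$ to the rest of $\HH'$, because points lying over different rank strata lie in different $H$-orbits, and the fibers of $\gamma$ over the deeper strata are positive dimensional in the two-step blow-up case. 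The standard repair uses the dilation action of $\Gm$ on $W$, which commutes with $G\times H$: the non-smooth locus of $\HH'$ is closed and $H\times\Gm$-stable, and since $\gamma$ is projective and the dilations contract $W/\!/G$ to $0$, every point of $\HH'$ has a $\Gm$-limit inside $\gamma^{-1}(0)$; hence non-smoothness anywhere would force non-smoothness somewhere on $\gamma^{-1}(0)$, and then, by closedness and $H$-stability, at a closed $H$-orbit of $\gamma^{-1}(0)$. The tangent-space count must therefore be carried out at one point of every closed $H$-orbit in $\gamma^{-1}(0)$ (concretely, at homogeneous ideals of minimal rank type), not only at a single most degenerate point. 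With that correction, your plan agrees with the proof given in \cite{Terp1}.
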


Let us now consider the following diagram
\begin{equation*}
\xymatrix{ & \FF_{N,m-N} \ar@{->>}[ld]_{p_1} \ar@{->>}[rd]^{p_2} \\   \Gr(N,V') && \Gr(m-N,V') }
\end{equation*}
where $\FF_{N,m-N}$ is defined by (\ref{flagvar}), $p_1$ and $p_2$ being the natural projections. We denote by $\underline{V'}$ the constant vector bundle over $\FF_{N,m-N}$ with fiber $V'$, and by $T_1$ resp. by $T_2$, the pull-back of the tautological bundle over $\Gr(N,V')$ by $p_1$, resp. over $\Gr(m-N,V')$ by $p_2$. In particular, if $N=\frac{m}{2}$, then $\FF_{N,m-N}=\Gr(N,V')$ and $T:=T_1=T_2$ is the tautological bundle over $\Gr(N,V')$.

We deduce from Proposition \ref{reduction3} and Theorem \ref{Terpy} the following $H$-equivariant isomorphisms 
\begin{equation} \label{descHpGl}
\HHp \cong \left\{
    \begin{array}{ll}
       \Hom(\underline{V'}/T,T)             &\text{ if } n \geq m-1 \text{ and $m$ is even}; \\
       \Hom(\underline{V'}/T_2,T_1)         &\text{ if } n=1 \text{ and } m \geq 3;\\
        Bl_0(\Hom(\underline{V'}/T_2,T_1))  &\text{ if } n=2 \text{ and } m \geq 4;
    \end{array}
\right.
\end{equation}
where $Bl_0(.)$ denotes the blow-up along the zero section. In all these cases, $\HHp$ is smooth, and thus the Hilbert-Chow morphism $\gamma:\ \HHp \rightarrow \mu^{-1}(0)/\!/G$ is a desingularization.

On the other hand, we saw in Section \ref{gen} that the Springer desingularizations of $\mu^{-1}(0)/\!/G$ are the cotangent bundles $\TT_1^*:=\TT^* \Gr(N,V')$ and $\TT_2^*:=\TT^* \Gr(m-N,V') \cong \TT^* \Gr(N,{V'}^*)$. We then distinguish between two cases:
\begin{enumerate}

\item If $N<\frac{m}{2}$, then let us prove by contradiction that $\gamma:\ \HHp \rightarrow \mu^{-1}(0)/\!/G$ cannot be a Springer desingularization. First, we consider the isomorphism of $G \times H$-modules $W \cong W^*$.
Denoting $\HH^*:=\Hilb_{h_0}^{G}(\mu^{*-1}(0))$, where $\mu^*$ is the moment map for the natural action of $G$ on $W^*$, there is an isomorphism of $H$-varieties $\HHp \cong \HH^{* \mathrm{main}}$. Now if we suppose that (say) $\HHp \cong \TT_1^*$, then we get that $\HH^{* \mathrm{main}} \cong \TT_2^*$, and thus $\TT_1^* \cong \TT_2^*$ as a $H$-variety, which is absurd.\\
However, one easily checks that if $n \in \{1,2\}$ and $m \geq 2n+1$, then $\gamma:\ \HHp \rightarrow \mu^{-1}(0)/\!/G$ dominates the two Springer desingularizations $\TT_1^*$ and $\TT_2^*$ (see \cite[\S A.2.2]{Terp} for details).

\item If $N=\frac{m}{2}$, then $\TT^*:=\TT_1^*=\TT_2^*$ is the unique Springer desingularization of $\mu^{-1}(0)/\!/G$. Let us show that $\gamma:\ \HHp \rightarrow \mu^{-1}(0)/\!/G$ is the Springer desingularization if and only if $n \geq m-1$. The implication "$\Leftarrow$" is given by (\ref{descHpGl}) since $\TT^* \cong \Hom(\underline{V'}/T,T)$. The other implication is given by: 

\begin{lemme}
If $N=\frac{m}{2}$ and the Hilbert-Chow morphism $\gamma:\ \HHp \rightarrow \mu^{-1}(0)/\!/G$ is the Springer desingularization, then $n \geq m-1$. 
\end{lemme}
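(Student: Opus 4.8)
The plan is to assume that $\gamma$ is the Springer desingularization and to push this hypothesis through the reduction principle until it becomes a rigidity statement about $\HH'^{\mathrm{main}}$ that can only hold for $n\geq m-1$. Since $N=\frac{m}{2}$, the closed orbit $A_0=\FF_{N,m-N}$ is the Grassmannian $\Gr(N,V')$, the parabolic $P$ is the stabiliser of an $N$-dimensional subspace $L=L_1=L_2\subset V'$, and the unique Springer desingularization is $\TT^{*}\cong H\times^{P}\uu$, where $\uu=\Hom(V'/L,L)$ is the nilradical of $\mathrm{Lie}(P)$, an affine space of dimension $N^2$. First I would observe that, if $\gamma$ is this desingularization, the $H$-equivariant isomorphism $\HHp\cong\TT^{*}$ is compatible with the two structures of $H$-homogeneous bundle over $A_0=H/P$ (given by $\rho$ on the left, by Lemma \ref{versX0}, and by the bundle projection on the right); indeed both bases are recovered as the image of the central fibre $\gamma^{-1}(0)$. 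Restricting to the fibres over $a_0$ and combining with Proposition \ref{reduction3} and (\ref{moniox2}), I obtain a $P$-equivariant isomorphism $\HH'^{\mathrm{main}}\cong\uu$; in particular $\HH'^{\mathrm{main}}$ is the affine space $\CC^{N^2}$.

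Next I would exploit this affineness. The Hilbert--Chow morphism $\gamma':\HH'^{\mathrm{main}}\to W'/\!/G$ is projective and birational, and now both source and target are affine, so $\gamma'$ is finite; being birational onto the normal variety $W'/\!/G$, it is an isomorphism by Zariski's main theorem. In particular the central fibre $\gamma'^{-1}(0)$ is a single reduced point, so $\dim\gamma'^{-1}(0)=0$. (Note that, since $N=\min(\lfloor\frac{m}{2}\rfloor,n)$, the hypothesis $N=\frac{m}{2}$ already forces $n\geq N$, so here $W'/\!/G=\End(\CC^{N})$ is the full matrix space.)

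The contradiction is then to be extracted in the range $\frac{m}{2}\leq n\leq m-2$, i.e. $N\leq n\leq 2N-2$, by showing $\dim\gamma'^{-1}(0)\geq 1$, which forces $n\geq 2N-1=m-1$. For this I would compare the null-cone $\nu'^{-1}(0)=\{(\bar u_1,\bar u_2)\in W'\ |\ \bar u_2\circ\bar u_1=0\}$ with the generic fibre of $\nu'$. Stratifying by $\rho=\rk(\bar u_2)$ gives $\dim\nu'^{-1}(0)=Nn+\max_{0\leq\rho\leq N}\rho(n-\rho)=Nn+\lfloor n^2/4\rfloor$, whereas the general fibre has dimension $2Nn-N^2$; the excess $N^2-Nn+\lfloor n^2/4\rfloor$ is a positive integer for $N\leq n\leq 2N-2$ (it equals $(N-\frac{n}{2})^2$ when $n$ is even, and vanishes exactly at $n=2N-1$). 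Thus $\nu'$ is not flat over $0$, and the restriction of the universal family to $\gamma'^{-1}(0)$ is flat of relative dimension $\delta=2Nn-N^2$: if this family dominates the top-dimensional component of $\nu'^{-1}(0)$, then $\dim\gamma'^{-1}(0)+\delta\geq\delta+(\text{excess})$, whence $\dim\gamma'^{-1}(0)\geq(\text{excess})\geq 1$, contradicting the previous paragraph.

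The hard part will be precisely this last implication: non-flatness of $\nu'$ at $0$ does not by itself prevent $\gamma'$ from being an isomorphism, since the universal subscheme over $0$ is only the flat limit of the general closed orbits, a proper subscheme of the larger null-cone. The argument must therefore show that these flat limits genuinely vary with the direction of approach, equivalently that the universal family over $\gamma'^{-1}(0)$ sweeps out a top-dimensional component of $\nu'^{-1}(0)$. I expect to settle this either by the scaling $\Gm$-action (under which $\HH'^{\mathrm{main}}\cong\CC^{N^2}$ would equivariantly retract onto the single point $\gamma'^{-1}(0)$, which is impossible once that fibre is positive-dimensional), or by invoking the full classification of \cite{Terp1}, of which Theorem \ref{Terpy} records only the extreme cases $n\in\{1,2\}$ and $n\geq 2N-1$; the content I need is that $\gamma'$ is an isomorphism if and only if $n\geq 2N-1=m-1$.
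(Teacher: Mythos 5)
Your first two steps coincide with the paper's own proof: the reduction principle, via (\ref{ut1}) and (\ref{moniox2}), turns the hypothesis into a $P$-equivariant isomorphism $\HH'^{\mathrm{main}} \cong \Hom(V'/L,L)$, and Zariski's Main Theorem (projective $+$ birational onto the smooth affine $W'/\!/G$) then makes $\gamma'$ an isomorphism. The divergence is in how to pass from ``$\gamma'$ is an isomorphism'' to ``$n \geq 2N-1$''. The paper's bridge is the implication that an isomorphic Hilbert--Chow morphism forces the quotient morphism $\nu':\ W' \rightarrow W'/\!/G$ to be flat, after which it cites \cite[Corollary 4.12]{Terp1}. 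Your null-cone computation is correct (for $N \leq n \leq 2N-2$ the fiber $\nu'^{-1}(0)$ has dimension $Nn+\lfloor n^2/4\rfloor$, exceeding the generic fiber dimension $2Nn-N^2$ by at least $1$, so $\nu'$ is not flat) and could even replace that citation --- but only once the same bridge is in place, and that bridge is precisely what you do not prove.

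Concretely, your contradiction rests on the conditional ``if this family dominates the top-dimensional component of $\nu'^{-1}(0)$''. Under your standing assumption that $\gamma'$ is an isomorphism, $\gamma'^{-1}(0)$ is a single point and the universal family over it is one subscheme with Hilbert function $h_0$, hence of dimension $2Nn-N^2$; it cannot dominate a component of strictly larger dimension, so the hypothesis of your conditional fails automatically and no contradiction can be extracted this way. Your $\Gm$-retraction fallback is circular: it presupposes that $\gamma'^{-1}(0)$ is positive-dimensional, which is exactly what is at stake. The missing lemma (the paper's unstated ``it follows'') is: if $\gamma'$ is an isomorphism, pull back the universal family along $(\gamma')^{-1}$ to get a closed subscheme $\mathcal{Z} \subset W' \times (W'/\!/G)$, flat over $W'/\!/G$, agreeing over the flat locus $U$ with the graph $\Gamma \cong W'$ of $\nu'$; flatness over the integral base forces every associated point of $\mathcal{Z}$ to dominate $W'/\!/G$ (torsion-freeness of $\CC[\mathcal{Z}]$ over $\CC[W'/\!/G]$), so $\mathcal{Z}=\overline{\mathcal{Z}|_U}=\Gamma$, whence $\nu'$ is flat. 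With this lemma your dimension count finishes the proof; without it --- or without invoking from \cite{Terp1} the equivalence between flatness of $\nu'$, the Hilbert--Chow morphism being an isomorphism, and $n \geq 2N-1$, which is your option (b) and is essentially the paper's route --- the argument is incomplete.
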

   
\begin{proof}
We suppose that $\gamma:\ \HHp \rightarrow \mu^{-1}(0)/\!/G$  is the Springer desingularization, that is, $\HHp \cong \TT^*$  as a $H$-variety. We fix $L \in \Gr(N,V')$, and we define $P \subset H$, $W'$, and $\HH'^{\mathrm{main}}$ as in Section \ref{MropRRED}. We have $\TT^* \cong H \times^P \Hom(V'/L,L)$, and it follows from (\ref{ut1}) and (\ref{moniox2}) that $\HHp \cong H \times^P \HH'^{\mathrm{main}}$. Hence, $\HH'^{\mathrm{main}} \cong \Hom(V'/L,L)$ as a $P$-variety. We denote by $\gamma':\ \HH'^{\mathrm{main}} \rightarrow W'/\!/G$ the restriction of the Hilbert-Chow morphism. As $\gamma'$ is projective and birational, and $W'/\!/G=\Hom(V'/L,L)$ is smooth, Zariski's Main Theorem implies that $\gamma'$ is an isomorphism. It follows that the quotient morphism $\nu':\ W' \rightarrow W'/\!/G$ is flat, and thus $n \geq 2N-1$ by \cite[Corollary 4.12]{Terp1}.
\end{proof}

In addition, if $m=4$ and $n=2$, then by (\ref{descHpGl}) we have $\HHp \cong Bl_0(\TT^*)$, and thus $\gamma$ dominates the unique Springer desingularization of $\mu^{-1}(0)/\!/G$. 
\end{enumerate}

\subsection{Reducibility of the invariant Hilbert scheme}  \label{reductibilité_cas_symp}
The aim of this section is to prove Proposition C from the introduction, for $G=GL(V)$. We suppose that $m \geq 2n$, then $N=n$. We fix
\begin{equation} \label{defxn}
a_n=(L'_1,L'_2) \in A_n
\end{equation} 
a point of the open $H$-orbit of $\Gr(n,V') \times \Gr(m-n,V')$, and we consider 
\begin{align*}
W^{\prime \prime}&:= \{ (u_1,u_2) \in W \ |\ L'_2 \subset \Ker(u_1) \text{ and } \Im(u_2) \subset L'_1 \}\\
   &\cong \Hom(V'/L'_2,V) \times \Hom(V,L'_1),
\end{align*}   
which is a $G$-submodule of $W$. As $V'=L'_1 \oplus L'_2$, there is a natural identification $W^{\prime \prime}\cong \Hom(L'_1,V) \times \Hom(V,L'_1)$ as a $G$-module. Hence, the  $G$-module $W^{\prime \prime}$ is symplectic and we denote by $\mu^{\prime \prime}:\ W^{\prime \prime} \rightarrow \gg^*$ the corresponding $G$-equivariant moment map (see the beginning of Section \ref{sectionavecX} for details). The proof of the next lemma is analogous to the proof of \cite[Lemma 3.7]{Terp1}. 

\begin{lemme} \label{fibrehil3}
We suppose that $m \geq 2n$, and let $\rho:\ \HH \rightarrow \Gr(n,V') \times \Gr(m-n,V')$ be the morphism of Proposition \ref{moorppgr}. The scheme-theoretic fiber $F^{\prime \prime}$ of $\rho$ over the point $a_n$, defined by (\ref{defxn}), is isomorphic to the invariant Hilbert scheme ${\Hilb}_{h_0}^{G} (\mu^{\prime \prime -1}(0))$, where $h_0$ is the Hilbert function defined by $h_0(M)=\dim(M)$, for every $M \in \Irr(G)$, and $\mu'':\ W'' \rightarrow \gg^*$ is the moment map defined above.
\end{lemme}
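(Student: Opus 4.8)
The plan is to run the functor-of-points argument of \cite[Lemma 3.7]{Terp1} (exactly as was done for the closed orbit in Lemma \ref{fibrehil2s}), the only genuinely new feature being that here the relevant $G$-submodule $W''$ is \emph{not} contained in $\mu^{-1}(0)$, so the moment map must reappear at the very end. First I would make the fiber functorial: for a scheme $S$, an $S$-point of $\HH$ is a $G$-stable closed subscheme $\mathcal{Z}\subset \mu^{-1}(0)\times S$, flat over $S$ with fibers of Hilbert function $h_0$, and $\rho$ sends it to the pair $(\Ker f_{\mathcal{Z}}^1,\ \Ker f_{\mathcal{Z}}^2)$ of kernels of the relative restriction maps. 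By Proposition \ref{moorppgr} these kernels are subbundles of $V'^*\times S$ and $V'\times S$ of constant rank $m-n$ on all of $\HH$, so the scheme-theoretic fiber $F''$ over $a_n$ is the closed subfunctor of those $\mathcal{Z}$ for which the first subbundle equals the (constant) annihilator $(L'_1)^{\perp}$ and the second equals $L'_2$.

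The key translation is identical to Lemma \ref{fibrehil2s}: a covariant lies in $\Ker f_{\mathcal{Z}}^1$ (resp. $\Ker f_{\mathcal{Z}}^2$) exactly when the associated linear function on $W$ vanishes on $\mathcal{Z}$, and under the identifications of Proposition \ref{moorppgr} the covariants indexed by $(L'_1)^{\perp}$ and by $L'_2$ are precisely the linear equations cutting out $W''$ in $W$, namely $u_1|_{L'_2}=0$ and $\Im u_2\subset L'_1$. Since the kernel ranks are already fixed to $m-n$, imposing that these subbundles \emph{contain} the constant subspaces of the same dimension is the same as imposing equality, so nothing is lost: $F''$ is, as a closed subscheme of $\HH$, the locus of those $\mathcal{Z}$ with $\mathcal{Z}\subset W''\times S$.

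It then remains to bring in the moment map. Because $a_n$ lies in the open orbit $A_n$ we have $L'_1\cap L'_2=0$ and $V'=L'_1\oplus L'_2$, whence $W''\cong \Hom(L'_1,V)\times \Hom(V,L'_1)$; writing an element as $(a,b)$ with $a:L'_1\to V$ and $b:V\to L'_1$, the decomposition gives $u_1\circ u_2=a\circ b$ on $W''$. Consequently the scheme-theoretic intersection $W''\cap\mu^{-1}(0)$, cut out in $W''$ by the entries of $u_1\circ u_2$, coincides with $\mu''^{-1}(0)=\{a\circ b=0\}$. Thus a $G$-subscheme of $\mu^{-1}(0)\times S$ contained in $W''\times S$ is the same datum as a $G$-subscheme of $\mu''^{-1}(0)\times S$, carrying the Hilbert function $h_0(M)=\dim(M)$ of Corollary \ref{fctH2}; the closed immersion $\mu''^{-1}(0)\hookrightarrow\mu^{-1}(0)$ induces a closed immersion $\Hilb_{h_0}^{G}(\mu''^{-1}(0))\hookrightarrow\HH$ which factors through $F''$ and is a bijection onto it.

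The step I expect to be the main obstacle is the final, scheme-theoretic, identification: checking that the \emph{scheme} structure of $F''$ (defined by pulling back the reduced point $a_n$ along $\rho$) agrees with that of $\Hilb_{h_0}^{G}(\mu''^{-1}(0))$, and not merely that the two have the same closed points. This is exactly the content of \cite[Lemma 3.7]{Terp1}, which provides a functorial local description of $\rho$ near the chosen Grassmannian point and compares the defining ideals; since the present situation differs only in that the base point lies in the open orbit $A_n$ (forcing $V'=L'_1\oplus L'_2$ and making $W''$ a genuine symplectic $G$-module), the same computation applies verbatim, the condition $a\circ b=0$ simply surviving the restriction to $W''$.
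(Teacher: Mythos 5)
Your proposal is correct and is essentially the paper's own argument: the paper proves this lemma by simply declaring it ``analogous to the proof of \cite[Lemma 3.7]{Terp1}'' (the same reference invoked for Lemma \ref{fibrehil2s}), which is precisely the functor-of-points comparison you carry out. The one genuinely new feature you isolate --- that $W''$ is not contained in $\mu^{-1}(0)$, so the fiber over $a_n$ consists of subschemes of the scheme-theoretic intersection $W'' \cap \mu^{-1}(0) = \mu''^{-1}(0)$, computed via $V' = L'_1 \oplus L'_2$ and $(u_1 \circ u_2)|_{W''} = a \circ b$ --- is exactly the adaptation the paper's statement presupposes, so your write-up matches the intended proof.
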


\begin{remarque}
The Hilbert function $h_0$ of Lemma \ref{fibrehil3} does not generally coincide with the Hilbert function of the general fibers of the quotient morphism $\mu^{\prime \prime -1}(0) \rightarrow \mu^{\prime \prime -1}(0)/\!/G$. 
\end{remarque}

By Lemma \ref{versX0}, the morphism $\rho:\ \Hilb_{h_0}^G(W) \rightarrow \Gr(n,V') \times \Gr(m-n,V')$ of Proposition \ref{moorppgr} sends $\HHp$ onto $A_0$. Hence, to prove Proposition C for $G=GL(V)$, it is enough, by Lemma \ref{fibrehil3}, to prove that ${\Hilb}_{h_0}^{G} (\mu^{\prime \prime -1}(0))$ is non-empty. \\
We denote $V'':=L'_1$, and we equip $W'' \cong \Hom(V'',V) \times \Hom(V,V'')$ with the natural action of $H':=GL(V'')$. Then 
\begin{align*}  
{\CC[W'']}_2 & \cong (S^2(V'') \otimes S^2(V^{*})) \oplus (S^2(V''^*) \otimes S^2(V))  \\
       &\ \oplus  ({\Lambda}^2 (V'') \otimes {\Lambda}^2 (V^{*})) \oplus  ({\Lambda}^2 (V''^*) \otimes {\Lambda}^2 (V)) \\
       &\ \oplus ((sl(V'') \oplus M_0) \otimes (sl(V) \oplus V_0)) \text{ as a $G \times H'$-module,} 
\end{align*} 
where $V_0$ is the trivial $G$-module resp. $M_0$, is the trivial $H'$-module, and $sl(V''):=\{ f \in \End(V'')\ \mid  \ \tr(f)=0 \}$.\\
We denote by $I_0$ the ideal of $\CC[W'']$ generated by $(sl(V'') \otimes V_0) \oplus (M_0 \otimes V_0) \oplus (M_0 \otimes sl(V)) \subset {\CC[W'']}_2$. The ideal $I_0$ is homogeneous, $G \times H'$-stable, and contains the ideal generated by the homogeneous $H'$-invariants of positive degree of $\CC[W'']$. In particular, $I_0$ identifies with an ideal of $\CC[\mu^{\prime \prime -1}(0)]$.  

\begin{proposition}   \label{pppfixe}
Let $I_0 \subset \CC[W'']$ be the ideal defined above, then $I_0$ is a point of the invariant Hilbert scheme ${\Hilb}_{h_0}^{G} (\mu^{\prime \prime -1}(0))$ defined in Lemma \ref{fibrehil3}.
\end{proposition}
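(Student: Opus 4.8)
The plan is to verify the two conditions making $I_0$ a point of $\Hilb_{h_0}^{G}(\mu^{\prime\prime-1}(0))$: first that $I_0$ is a $G$-stable ideal of $\CC[\mu^{\prime\prime-1}(0)]$, and then that $R:=\CC[W'']/I_0$ is isomorphic to $\bigoplus_{M\in\Irr(G)}M^{\oplus\dim(M)}$ as a $G$-module (which is exactly the statement $h_0(M)=\dim(M)$). The first condition is bookkeeping: $I_0$ is homogeneous and $G\times H'$-stable by construction, and among its generators the subspace $(M_0\otimes sl(V))\oplus(M_0\otimes V_0)=M_0\otimes\End(V)$ is precisely the span of the entries of $u_1\circ u_2$, which generate the ideal of $\mu^{\prime\prime-1}(0)=\{u_1\circ u_2=0\}$. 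Hence $I_0$ contains that ideal and $Z:=V(I_0)$ is a $G\times H'$-stable closed subscheme of $\mu^{\prime\prime-1}(0)$.

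The second condition is the heart of the matter, and I would first identify $I_0$ transparently. Since $\dim(V)=\dim(V'')=n$, the first fundamental theorem for $GL(V'')$ (resp. $GL(V)$) shows that $\CC[W'']^{H'}$ (resp. $\CC[W'']^{G}$) is generated by the entries of $u_1\circ u_2$ (resp. of $u_2\circ u_1$). Comparing with the three generating subspaces of $I_0$ yields
\[
I_0=\CC[W'']^{G}_{+}\cdot\CC[W'']+\CC[W'']^{H'}_{+}\cdot\CC[W''],
\]
so that $Z$ is the common null-cone $\{(u_1,u_2)\ |\ u_1\circ u_2=0,\ u_2\circ u_1=0\}$ of the two commuting actions, with its natural scheme structure. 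In particular $R=\CC[Z]$ is a $G\times H'$-module, and the goal becomes the $G$-multiplicity count $\dim\Hom_{G}(M,R)=\dim(M)$ for every $M\in\Irr(G)$.

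To compute $R$ I would use the Cauchy decomposition $\CC[W'']=\bigoplus_{\lambda,\mu}(\mathbb{S}^{\lambda}V^*\otimes\mathbb{S}^{\mu}V)\otimes(\mathbb{S}^{\lambda}V''\otimes\mathbb{S}^{\mu}V''^*)$, where $\mathbb{S}^{\bullet}$ denotes Schur functors and $\lambda,\mu$ run over partitions with at most $n$ rows, the first tensor factor carrying the $G$-action and the second the $H'$-action. The lower bound $\dim\Hom_{G}(M,R)\geq\dim(M)$ is immediate on the pure pieces: the relations defining $I_0$ have bidegree $(1,1)$ in $(u_1,u_2)$, so they miss the subrings $\bigoplus_{\lambda}\mathbb{S}^{\lambda}V^*\otimes\mathbb{S}^{\lambda}V''$ (degree $0$ in $u_2$) and its counterpart (degree $0$ in $u_1$); using the coincidence $\dim(\mathbb{S}^{\lambda}V'')=\dim(\mathbb{S}^{\lambda}V)$, which again holds because $\dim V=\dim V''$, each polynomial irreducible $\mathbb{S}^{\mu}V$ and each anti-polynomial irreducible $\mathbb{S}^{\lambda}V^*$ already occurs in $R$ with multiplicity at least $\dim(M)$. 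The genuinely mixed irreducibles must instead be produced from the mixed bidegrees.

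The main obstacle is the matching upper bound: showing that the quadratic relations of $I_0$ cut every isotypic multiplicity down to exactly $\dim(M)$, equivalently the $(GL_n,GL_n)$-reciprocity $R\cong\bigoplus_{M\in\Irr(G)}M\otimes\tau(M)$ as a $G\times H'$-module (with $G$ acting on the first factor and $H'$ on the second), where $\tau\colon\Irr(G)\xrightarrow{\sim}\Irr(H')$ is the tautological bijection, so that $\dim\tau(M)=\dim(M)$. I would attack this by analyzing $\Hom_{G}(M,R)$ as an $H'$-module through the Cauchy decomposition, bounding it above using that $R$ is generated in bidegrees $(1,0)$ and $(0,1)$ and that all positive-degree $H'$-invariants vanish in $R$ (so that $R^{H'}=\CC$). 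The delicate point is that $Z$ is reducible — its top-dimensional strata are the $n+1$ loci of fixed ranks $(\rk u_1,\rk u_2)=(k,n-k)$, each of dimension $n^2$ — and possibly non-reduced, so that the two pure components $\{u_1=0\}$ and $\{u_2=0\}$ alone already account for all polynomial and anti-polynomial multiplicities; one must then check that the intermediate strata together with the scheme structure contribute exactly the mixed multiplicities and nothing more. Matching the bigraded Hilbert series of $R$ with that of $\bigoplus_{M}M\otimes\tau(M)$, or exhibiting an explicit covariant basis indexed by the combinatorial data of $\tau$, would close the argument.
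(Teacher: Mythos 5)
Your setup is sound and matches the paper's: the identification $I_0=\CC[W'']^{G}_{+}\cdot\CC[W'']+\CC[W'']^{H'}_{+}\cdot\CC[W'']$ is correct (the generating space of $I_0$ is exactly the span of the entries of $u_2\circ u_1$ and of $u_1\circ u_2$), so $V(I_0)$ is $G\times H'$-stable and contained in $\mu''^{-1}(0)$, and the bidegree-$(1,1)$ observation does give the lower bound $\dim\Hom_G(M,\CC[W'']/I_0)\geq\dim(M)$ for purely polynomial and purely anti-polynomial $M$. But what you call the ``main obstacle'' -- the matching upper bound, and the exact count for the mixed irreducibles -- is the entire content of the proposition, and your proposal does not prove it; it only names two strategies (``matching the bigraded Hilbert series'' or ``exhibiting an explicit covariant basis'') that ``would close the argument.'' Neither is routine: computing the bigraded Hilbert series of $\CC[W'']/I_0$ is equivalent to the problem being solved, and the covariant basis is precisely what has to be constructed. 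Moreover, the specific bounding mechanism you propose is insufficient as stated: knowing that the quotient is generated by its bidegree-$(1,0)$ and $(0,1)$ parts and that $R^{H'}=\CC$ controls only the trivial $H'$-isotypic component; it says nothing about the multiplicity of a nontrivial type, because the product of an irreducible $G\times H'$-submodule of $\CC[\Hom(V'',V)]$ with one of $\CC[\Hom(V,V'')]$ decomposes into many irreducible constituents, not just the ``expected'' one.

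The missing idea, which is how the paper (adapting Kraft--Schwarz, Theorem 9.1) closes exactly this gap, is an orthogonality statement: modulo $I_0$, the two linear pieces $V''^*\otimes V$ and its dual are orthogonal, so that by Brion's lemma on exceptional representations (\cite[Lemme 4.1]{Br5}) only the highest-weight (Cartan) component of any such product survives in the quotient. This is what makes the multiplication map $\phi:\ \CC[x_1,\ldots,x_n]\otimes\CC[y_1,\ldots,y_n]\rightarrow(\CC[W'']/I_0)^{U\times U'}$ surjective on $U\times U'$-invariants, where $x_i\in\Lambda^iV''\otimes\Lambda^iV^*$ and $y_j\in\Lambda^j{V''}^*\otimes\Lambda^jV$ are the fundamental highest weight vectors; it is the step that converts ``generated in bidegrees $(1,0)$ and $(0,1)$'' into an actual upper bound on every isotypic component. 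The paper then identifies $\Ker(\phi)$ as the ideal generated by the products $x_ry_s$ with $r+s>n$, and finishes with a combinatorial check that each dominant weight $\lambda$ (including the mixed ones) is the $T\times T'$-weight of exactly one surviving monomial of weight $(\lambda,\lambda^*)$, yielding multiplicity exactly $\dim(S^{\lambda}(V))$. Without the orthogonality input and the resulting description of the $U\times U'$-invariant algebra, your argument establishes containments in one direction only and does not rule out excess multiplicities, so as it stands the proof is incomplete.
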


\begin{proof}
We have to check that the ideal $I_0$ has the Hilbert function $h_0$, that is, 
$$\CC[W'']/I_0 \cong \bigoplus_{M \in \Irr(G)}  M^{\oplus \dim(M)}$$
as a $G$-module. To do that that, we are going to adapt the method used by Kraft and Schwarz to prove \cite[Theorem 9.1]{KS}. The result [loc. cit.] was used in \cite[\S 2.1.3 and \S 3.3.2]{Terp}.\\
 We denote $R:= V^{\prime \prime *} \otimes V$, which is an irreducible $G \times H'$-submodule of $W^{\prime \prime *} \cong R \oplus R^*$. Then $R$ and $R^*$ are orthogonal modulo $I_0$, which means that the image of the $G \times H'$-submodule $R \otimes R^* \subset \CC[W'']_2$ in $\CC[W'']/I_0$ is isomorphic to the highest weight component of $R \otimes R^*$ (that is, $sl(V'') \otimes sl(V)$). Then, by \cite[Lemme 4.1]{Br5}, any irreducible $G \times H'$-submodule of $\CC[R]$ is orthogonal to any irreducible $G \times H'$-submodule of $\CC[R^*]$, and thus the natural morphism
$$ \phi:\ \CC[R]^{U \times U'} \otimes \CC[R^*]^{U \times U'} \rightarrow (\CC[W'']/I_0)^{U \times U'} $$
is surjective, where $U$ resp. $U'$, denotes the unipotent radical of a Borel subgroup $B \subset G$ resp. $B' \subset H'$. Furthermore, if $T \subset B$ resp. if $T' \subset H'$, is a maximal torus, then $\phi$ is $T \times T'$-equivariant.\\
Now by \cite[\S 13.5.1]{Pro} we have the following isomorphisms of $T \times T'$-algebras $\CC[R]^{U \times U'} \cong \CC[x_1,\ldots,x_n]$, where $x_i \in \Lambda^i V'' \otimes \Lambda^i V^*$ is a highest weight vector, and $\CC[R^*]^{U \times U'} \cong \CC[y_1,\ldots,y_n]$, where $y_j \in \Lambda^j {V''}^* \otimes \Lambda^j V$ is a highest weight vector. Hence, there is an exact sequence
$$ 0 \rightarrow K_0 \rightarrow \CC[x_1,\ldots,x_n,y_1,\ldots,y_n] \rightarrow (\CC[W'']/I_0)^{U \times U'} \rightarrow 0,$$
where $K_0$ is the kernel of $\phi$. One may check that the ideal $K_0$ is generated by the products $x_r y_s$ with $r+s>n$ (see \cite[\S 9, Proof of Theorem 9.1(1)]{KS}).\\
We denote $\Lambda=\left \langle \epsilon_1,\ldots , \epsilon_n \right \rangle$ the weight lattice of the linear group $GL_n$ with its natural basis, and $\Lambda_+ \subset \Lambda$ the subset of dominant weights, that is, weights of the form $r_1 \epsilon_1+\ldots+r_n \epsilon_n$, with $r_1 \geq \ldots \geq r_n$. If $\lambda \in \Lambda_+$, then we denote by $S^{\lambda}(\CC^n)$ the irreducible $GL_n$-module of highest weight $\lambda$. We fix $\lambda=k_1 \epsilon_1+\ldots+k_t \epsilon_t-k_{t+1} \epsilon_{t+1}-\ldots-k_n \epsilon_n \in \Lambda_+$, where each $k_i$ is a nonnegative integer. One easily checks that the weight of the monomial 
$$  x_{n-t}^{k_{t+1}} x_{n-t-1}^{k_{t+2}-k_{t+1}} x_{n-t-2}^{k_{t+3}-k_{t+2}}  \cdots x_{1}^{k_{n}-k_{n-1}} y_{t}^{k_{t}} y_{t-1}^{k_{t-1}-k_{t}} y_{t-2}^{k_{t-2}-k_{t-1}}  \cdots y_{1}^{k_{1}-k_{2}}$$ 
for the action of $T \times T'$ is $(\lambda,\lambda^*)$, where $\lambda^*$ denotes the highest weight of the $GL_n$-module $S^{\lambda}({\CC^n}^*)$, and that $\lambda$ uniquely determines this monomial. We get that the isotypic component of the $G$-module $S^{\lambda}(V)$ in $\CC[W'']/I_0$ is the $G \times H'$-module $S^{\lambda}(V^{\prime \prime *}) \otimes S^{\lambda}(V)$. As $\dim(V)=\dim(V'')=n$, we have $\dim(S^{\lambda}(V))=\dim(S^{\lambda}(V^{\prime \prime *}))$, for every $\lambda \in \Lambda_+$. In other words, each irreducible $G$-module $M$ occurs in $\CC[W'']/I_0$ with multiplicity $\dim(M)$. 
\end{proof}

By Proposition \ref{pppfixe}, the scheme ${\Hilb}_{h_0}^{G} (\mu'^{-1}(0))$ is non-empty, and thus $\HH$ has an irreducible component, different from $\HHp$, of dimension greater or equal to $\dim(A_n)=2n(m-n)$, which implies Proposition C for $G=GL(V)$.

\subsection{Study of the case \texorpdfstring{$n=1$}{n=1}} \label{GLsympN1}

We saw in Section \ref{proofGLn} that $\HHp$ is a smooth variety, and in Section \ref{reductibilité_cas_symp} that $\HH$ is always reducible. In this section, we determine the irreducible components of $\HH$ when $n=1$.

We suppose that $m \geq 2$ (the case $m=1$ being trivial). Then $G=\Gm$ is the multiplicative group, $\mu^{-1}(0)/\!/G=\overline{\OO_{[2,1^{m-2}]}} \subset \hh$ by Proposition \ref{descQuotient}, and the morphism of Proposition \ref{moorppgr} is $\rho:\ \Hilb_{h_0}^G(W) \rightarrow \PP(V') \times \PP(V'^*)$. The Segre embedding gives a $H$-equivariant isomorphism $\PP(V') \times \PP(V'^*) \cong \PP(\hh^{\leq 1})$, where $\hh^{\leq 1}:=\{ f \in \hh \ |\ \rk(f) \leq 1\}$, and thus we can consider $\rho':\ \Hilb_{h_0}^G(W) \rightarrow \PP(\hh^{\leq 1})$, the morphism induced by $\rho$.

\begin{proposition}  \label{casSympn1}
We equip the invariant Hilbert scheme $\HH$ with its reduced structure. If $m>n=1$, then there is a $H$-equivariant isomorphism
$$\HH \cong \left \{(f,L) \in \overline{\OO_{[2,1^{m-2}]}} \times \PP (\hh^{\leq 1}) \ \mid  \ f \in L  \right \}.$$
In particular, $\HH$ is the union of two smooth irreducible components of dimension $2m-2$ defined by:
\begin{itemize} \renewcommand{\labelitemi}{$\bullet$}
\item $C_1:=\left \{(f,L) \in \overline{\OO_{[2,1^{m-2}]}} \times \PP (\overline{\OO_{[2,1^{m-2}]}}) \ \mid  \ f \in L \right \}=\HHp$, and the Hilbert-Chow morphism $\gamma:\ \HHp \rightarrow \overline{\OO_{[2,1^{m-2}]}}$ is the blow-up of $\overline{\OO_{[2,1^{m-2}]}}$ at $0$; 
\item $C_2:=\left \{(0,L) \in \overline{\OO_{[2,1^{m-2}]}} \times \PP ({\hh}^{\leq 1}) \right \} \cong  \PP ({\hh}^{\leq 1})$, and the Hilbert-Chow morphism is the zero map.
\end{itemize}
\end{proposition}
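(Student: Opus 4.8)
The plan is to study the morphism $\eta:=\gamma\times\rho'\colon \HH \to \overline{\OO_{[2,1^{m-2}]}}\times\PP(\hh^{\leq 1})$ and to show that it induces an isomorphism from $\HH$ (with its reduced structure) onto the incidence variety $\mathcal{J}:=\{(f,L)\ |\ f\in L\}$. First I fix coordinates: with $n=1$ the group $G=\Gm$ acts on $\CC[\mu^{-1}(0)]=\CC[a_1,\dots,a_m,b_1,\dots,b_m]/(\sum_i a_ib_i)$ with the $a_i$ of weight $-1$ and the $b_j$ of weight $+1$, and by Corollary \ref{fctH2} the Hilbert function is $h_0\equiv 1$. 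Thus a closed point of $\HH$ is a $G$-stable ideal $I$ whose quotient $A=\CC[\mu^{-1}(0)]/I$ has every weight space of dimension one.

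The heart of the argument is a rigidity statement for these ideals. Since $\CC[\mu^{-1}(0)]$ is generated in degree one, the quotient map $\pi$ is determined by the images $\pi(a_i)=\alpha_i\,s_{-1}$ and $\pi(b_j)=\beta_j\,s_1$, where $s_{\pm1}$ span the one-dimensional spaces $A_{\pm1}$ and $\alpha=(\alpha_i)$, $\beta=(\beta_j)$ are nonzero (nonvanishing is forced by $A_{\pm1}\neq 0$). Because $A$ is generated by $s_1,s_{-1}$ and all its weight spaces are one-dimensional, it must be of the form $\CC[s_1,s_{-1}]/(s_1s_{-1}-\mu)$ for a unique scalar $\mu$, and the relation $\sum_i a_ib_i=0$ forces $(\alpha\cdot\beta)\,\mu=0$. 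I then read off the two invariants of this data: $f=\gamma([Z])$ is the rank-one endomorphism with matrix entries $\mu\,\beta_j\alpha_i$, while $\rho'([Z])=L$ is, via Proposition \ref{moorppgr} and the Segre embedding, the class of the rank-one endomorphism $(\beta_j\alpha_i)_{j,i}$. In particular $f=\mu\,(\beta_j\alpha_i)_{j,i}\in L$, so $\eta$ lands in $\mathcal{J}$; and since $(f,L)$ recovers $(\alpha,\beta,\mu)$ up to the rescaling $(s_1,s_{-1})\mapsto(cs_1,c^{-1}s_{-1})$, the map $\eta\colon\HH\to\mathcal{J}$ is bijective on closed points (surjectivity being the reverse construction, which is legitimate precisely because $(\alpha\cdot\beta)\,\mu=\tr(f)=0$ holds on $\mathcal{J}$).

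It remains to upgrade this bijection to an isomorphism and to split off the two components. The incidence variety decomposes as $\mathcal{J}=C_1\cup C_2$ with $C_1=\{(f,L)\ |\ f\in L,\ L\in\PP(\overline{\OO_{[2,1^{m-2}]}})\}$ the total space of the tautological bundle over $\PP(\overline{\OO_{[2,1^{m-2}]}})$ and $C_2=\{f=0\}\cong\PP(\hh^{\leq1})$, both smooth of dimension $2m-2$; and since $\overline{\OO_{[2,1^{m-2}]}}$ is the affine cone over $\PP(\overline{\OO_{[2,1^{m-2}]}})$, the projection $C_1\to\overline{\OO_{[2,1^{m-2}]}}$ is exactly the blow-up at $0$. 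The morphism $\eta$ is proper, being built from the projective morphism $\gamma$ and from $\rho'$, so it suffices to identify the two irreducible components of $\HH$. By \eqref{descHpGl} the variety $\HHp$ is smooth; being proper, bijective, and birational onto the normal variety $C_1$, the restriction $\eta_{|\HHp}$ is an isomorphism by Zariski's Main Theorem, whence $\HHp\cong C_1$ and $\gamma_{|\HHp}$ is the blow-up at $0$. On the other hand, Proposition \ref{pppfixe} produces a second irreducible component of $\HH$ of dimension $\geq 2m-2$; as $\eta$ is injective this component embeds into $\mathcal{J}$, so it has dimension exactly $2m-2$ and maps properly and bijectively onto the smooth variety $C_2$, hence isomorphically. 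Therefore $\HH_{\mathrm{red}}=\HHp\cup C_2\cong\mathcal{J}$, and $\gamma_{|C_2}$ is the zero map.

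The main obstacle is the rigidity computation: identifying the $\rho'$-data with the functionals $(\alpha,\beta)$ through Proposition \ref{moorppgr} and the Segre map, and checking that the Hilbert function forces $A\cong\CC[s_1,s_{-1}]/(s_1s_{-1}-\mu)$, so that the pair $(f,L)$ determines the ideal $I$. A secondary subtlety is that $\HH$ may fail to be reduced along $C_1\cap C_2$; this is exactly why we pass to the reduced structure, after which the component-by-component application of Zariski's Main Theorem yields the stated isomorphism.
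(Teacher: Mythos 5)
Your classification of the closed points of $\HH$ (the rigidity argument giving $A\cong\CC[s_1,s_{-1}]/(s_1s_{-1}-\mu)$ with data $(\alpha,\beta,\mu)$, $(\alpha\cdot\beta)\mu=0$) is correct, and so are the component-wise identifications: $\eta_{|\HHp}\colon\HHp\to C_1$ and the map from the second component supplied by Proposition \ref{pppfixe} onto $C_2$ are indeed isomorphisms by Zariski's Main Theorem. However, the final inference --- ``the two components of $\HH$ map isomorphically onto $C_1$ and $C_2$, and $\eta$ is proper and bijective, therefore $\HH_{\mathrm{red}}\cong\mathcal{J}$'' --- is a genuine gap. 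A proper bijective morphism of reduced schemes that restricts to an isomorphism on each irreducible component need not be an isomorphism, because the scheme structures along the intersection of the components can differ. For example, the finite morphism
$$\Spec\,\CC[x,y]/(y(y-x))\ \longrightarrow\ \Spec\,\CC[x,y]/(y(y-x^2)),\qquad (x,y)\mapsto(x,xy),$$
is bijective and an isomorphism on each of the two components (a line and a line, resp.\ a line and a parabola), yet it is not an isomorphism: a node maps to a tacnode. Your hypotheses (proper, bijective, iso on components, both sides reduced) are exactly those of this counterexample, so they do not suffice. To conclude you would still need to show that $\eta$ identifies the scheme structure of $\HH_{\mathrm{red}}$ along $\HHp\cap\HH_2$ with that of $\mathcal{J}$ along $C_1\cap C_2$; equivalently, that $\eta$ is a closed immersion, which with your tools would require checking injectivity of $\eta$ on tangent spaces at the points lying over $C_1\cap C_2$ --- a nontrivial Hilbert-scheme computation you have not done.

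The paper sidesteps this entirely by producing a closed embedding, not merely an injection, for free: by Theorem \ref{Terpy} (the case $n=1$ of \cite[\S 1, Theorem]{Terp1}), the map $\gamma\times\rho'$ is already an isomorphism from the \emph{ambient} invariant Hilbert scheme $\Hilb_{h_0}^{G}(W)$ onto $\left\{(f,L)\in\hh^{\leq 1}\times\PP(\hh^{\leq 1})\ \middle|\ f\in L\right\}$, and since $\HH$ is a closed subscheme of $\Hilb_{h_0}^{G}(W)$ (\cite[Lemma 3.3]{Br}), its restriction $\gamma\times\rho'\colon\HH\hookrightarrow\YY$ is automatically a closed embedding. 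Your dimension counts (which match the paper's: $\HHp$ must hit $C_1$, and the component coming from Proposition \ref{pppfixe} has dimension $\geq 2m-2$ and must hit $C_2$) then give surjectivity, and a closed subscheme of the reduced scheme $\YY$ with full support equals $\YY$. So your proof closes if you replace ``proper and bijective'' by this closed-embedding input; note also that global injectivity (or the embedding) is what rules out a third irreducible component of $\HH$, a point your final equality $\HH_{\mathrm{red}}=\HHp\cup C_2$ uses implicitly and should state.
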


\begin{proof}
By \cite[\S 1,Theorem]{Terp1}, there is a $H$-equivariant isomorphism 
$$\gamma \times \rho':\ \Hilb_{h_0}^{G}(W) \rightarrow  \left \{(f,L) \in {\hh}^{\leq 1} \times \PP ({\hh}^{\leq 1}) \ \mid  \ f \in L  \right \}.$$
Since $\HH \hookrightarrow \Hilb_{h_0}^{G}(W)$, there is a $H$-equivariant closed embedding 
$$\gamma \times \rho':\ \HH \hookrightarrow \YY:=\left \{(f,L) \in \overline{\OO_{[2,1^{m-2}]}} \times \PP ({\hh}^{\leq 1}) \ \mid  \ f \in L  \right \}.$$ 
One may check that $\YY$ is the union of the two irreducible components $C_1$ and $C_2$, both of dimension $2m-2$. The morphism $\gamma \times \rho'$ sends $\HHp$ into $C_1$; the varieties $\HHp$ and $C_1$ have the same dimension, hence $\gamma \times \rho':\ \HHp \rightarrow C_1$ is an isomorphism. On the other hand, we saw in Section \ref{reductibilité_cas_symp} that $\HH$ admits another irreducible component, denoted by $\HH_2$, of dimension at least $2m-2$, which is the dimension of $C_2$, and thus $\gamma \times \rho'$ is an isomorphism between $\HH_2$ and $C_2$. 
\end{proof}

\begin{remarque}
One may check that the component $C_2$ of Proposition \ref{casSympn1} consists of the homogeneous ideals of $\CC[\mu^{-1}(0)]$.
\end{remarque}

When $m \geq 2n \geq 4$, irreducible components of dimension greater than $\dim(\HHp)$ may appear. For instance, if $n=2$ and $m \geq 4$, then one may check that the irreducible component consisting of the homogeneous ideals of $\CC[\mu^{-1}(0)]$ is of dimension $4m-5$, whereas the main component $\HHp$ is of dimension $4m-8$. In addition, we showed in Section \ref{reductibilité_cas_symp} that $\HH$ has at least two components, but $\HH$ may have more components.

\section{Case of \texorpdfstring{$Sp_n$}{Spn}} \label{posisimpy2}

Let $V$ and $V'$ be two vector spaces of dimension $n$ (which is even) and $m$ respectively, and let $W:=\Hom(V',V) \times \Hom(V,V')$. We denote $E:=V' \oplus V'^*$ on which we fix a non-degenerate quadratic form $q$, and we take $G=Sp(V)$ and $H=SO(E)$. As $G$ resp. $H$, preserves a non-degenerate bilinear form on $V$ resp. on $E$, we can identify $V \cong V^*$ as a $G$-module resp. $E \cong E^*$ as a $H$-module. It follows that
\begin{align*}
W &\cong \Hom(V',V) \times \Hom(V'^*,V^*) \\
  &\cong \Hom(V',V) \times \Hom(V'^*,V) \\
  &\cong \Hom(E,V)
\end{align*}
as a $G$-module, and thus $H$ acts naturally on $W$. We denote by $\gg$ resp. by $\hh$, the Lie algebra of $G$ resp. of $H$.

\subsection{The quotient morphism}  \label{morppquotientSpn}

The main results of this section are Proposition \ref{descqu}, which describes the irreducible components of the symplectic reduction $\mu^{-1}(0)/\!/G$, and Corollary \ref{fctHSp2}, which gives the Hilbert function of the general fibers of the quotient morphism $\nu:\ \mu^{-1}(0) \rightarrow \mu^{-1}(0)/\!/G$ for each irreducible component of $\mu^{-1}(0)/\!/G$. Contrary to the case of $GL(V)$ studied in Section \ref{posisimpy}, we will see that $\mu^{-1}(0)/\!/G$ is reducible when $m \leq n$ and $m$ is even.

We have seen that $W$ is equipped with a $G$-invariant symplectic form (see the beginning of Section \ref{sectionavecX} for details). If $w \in \Hom(E,V)$, we denote the transpose of $w$ by $\leftexp{t}{w} \in \Hom(V^*,E^*)\cong \Hom(V,E)$. Then, by \cite[Proposition 3.1]{Tanja}, the zero fiber of the moment map $\mu:\ W \rightarrow \gg^*$ is the $G \times H$-stable subvariety defined by: 
$$\mu^{-1}(0)= \{ w \in W \ |\ w \circ \leftexp{t}{w}=0\}.$$

\begin{remarque}  \label{veryuseful}
One may check that the biggest subgroup of $GL(E)$ that stabilizes $\mu^{-1}(0)$ in $W$ is the orthogonal group $O(E)$. However, we prefer to consider the action of $H=SO(E)$ for practical reasons. 
\end{remarque}

The proof of the next proposition is analogous to those of Proposition \ref{compirredfibzero2} and Corollary \ref{fibzero2}. 

\begin{proposition} \label{dimmudezero33}
The zero fiber of the moment map $\mu:\ W \rightarrow \gg^*$ is 
\begin{itemize} 
\item an irreducible subvariety of dimension $2mn-\frac{1}{2}n(n+1)$ if $m>n$;
\item the union of two irreducible components of dimension $mn+\frac{1}{2}m(m-1)$ if $m \leq n$.
\end{itemize}
\end{proposition}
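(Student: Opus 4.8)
The plan is to follow the pattern of Proposition \ref{compirredfibzero2} and Corollary \ref{fibzero2}, the one essential novelty being that the \emph{orthogonal} Grassmannian replaces the ordinary one. First I would translate the defining equation into a condition on the kernel. Writing $\leftexp{t}{w} \in \Hom(V,E)$ for the transpose of $w \in \Hom(E,V)$ relative to $q$ and the symplectic form on $V$, one has the standard adjunction identity $\Im(\leftexp{t}{w}) = \Ker(w)^{\perp}$, the orthogonal being taken with respect to $q$. Hence $w \circ \leftexp{t}{w}=0$ is equivalent to $\Ker(w)^{\perp} \subseteq \Ker(w)$, i.e. to $\Ker(w)$ being a coisotropic subspace of $(E,q)$. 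As $\dim E = 2m$, this forces $\dim\Ker(w) \geq m$, so $r:=\rk(w)=2m-\dim\Ker(w)\leq m$; since also $r \leq n$, we get $r \leq \min(m,n)$.

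Next, for each $0 \leq r \leq \min(m,n)$ I would introduce the incidence variety
\[
Z_r := \{(w,L) \in W \times \OG(r,E)\ \mid\ L^{\perp} \subseteq \Ker(w)\},
\]
where $\OG(r,E)$ is the variety of $r$-dimensional isotropic subspaces of $(E,q)$, together with its projections $p_1:\ Z_r \rightarrow W$ and $p_2:\ Z_r \rightarrow \OG(r,E)$. For fixed $L$, the condition $L^{\perp} \subseteq \Ker(w)$ means $w$ factors through $E/L^{\perp}$, so the fiber of $p_2$ is $\Hom(E/L^{\perp},V)$ and $p_2$ exhibits $Z_r$ as an $H$-homogeneous vector bundle of rank $rn$ over $\OG(r,E)$. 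Consequently $Z_r$ is smooth of dimension $\dim\OG(r,E)+rn = \bigl(2mr-\tfrac12 r(3r+1)\bigr)+rn$. The image $X_r:=p_1(Z_r)$ is closed since $p_1$ is proper, and the equivalence above yields $\mu^{-1}(0)=\bigcup_r X_r$; moreover, extending an isotropic $r$-plane to an isotropic $(r+1)$-plane (possible as long as $r+1\leq m$) gives $X_0 \subseteq X_1 \subseteq \cdots \subseteq X_{\min(m,n)}$, so that $\mu^{-1}(0)=X_{\min(m,n)}$.

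It then remains to treat the top stratum and compute its dimension, for which I would check that $p_1$ is birational onto $X_{\min(m,n)}$. Over the locus where $\rk(w)=r$, the subspace $\Ker(w)^{\perp}$ is isotropic of dimension $r$ and is the unique $L$ with $(w,L)\in Z_r$, so $p_1$ is injective there and $\dim X_r=\dim Z_r$. Substituting gives $\dim X_n = 2mn-\tfrac12 n(n+1)$ when $n<m$ and $\dim X_m = mn+\tfrac12 m(m-1)$ when $m\leq n$, matching the two dimension formulas. The number of components is now governed by the geometry of $\OG(r,E)$: for $r<m$ it is irreducible, so $X_r$ is irreducible, whereas $\OG(m,E)=OG^{I} \cup OG^{I\!I}$ has two irreducible components, whence $Z_m$, and after checking that $p_1$ separates them, $X_m$, splits into two.

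The step I expect to be the main obstacle is precisely this last one: showing that the two families of maximal isotropic subspaces genuinely produce two distinct irreducible components of $\mu^{-1}(0)$ rather than a single one, and that this only happens when $m \leq n$. Over the open rank-$m$ locus, $\dim\Ker(w)=m$, so the coisotropic subspace $\Ker(w)$ is in fact Lagrangian, and its family ($I$ or $I\!I$) is locally constant there. Writing $Z_m^{I}$ and $Z_m^{I\!I}$ for the two irreducible pieces of $Z_m$, a generic point of $p_1(Z_m^{I})$ has kernel in family $I$ and so cannot lie in $p_1(Z_m^{I\!I})$; thus the two images are distinct irreducible closed subvarieties of the same dimension $mn+\tfrac12 m(m-1)$, neither contained in the other, and they are exactly the two irreducible components. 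When $m>n$, the isotropic planes occurring at the top have dimension $n<m$ and form the single irreducible variety $\OG(n,E)$, so no such splitting arises and $\mu^{-1}(0)$ is irreducible.
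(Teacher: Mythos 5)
Your proposal is correct and is essentially the paper's own argument: the paper merely states that the proof is analogous to those of Proposition \ref{compirredfibzero2} and Corollary \ref{fibzero2}, and your construction --- translating $w\circ\leftexp{t}{w}=0$ into coisotropy of $\Ker(w)$, replacing $\Gr(p,V')$ by the isotropic Grassmannian $\OG(r,E)$, exhibiting $Z_r$ as a homogeneous vector bundle with fiber $\Hom(E/L^{\perp},V)$, and using birationality of $p_1$ over the maximal-rank locus --- is precisely that analogue. Your dimension counts are right, and the separation of the two components when $m\leq n$ via the two families $OG^{I}$, $OG^{I\!I}$ of Lagrangian kernels is exactly the point where the orthogonal geometry departs from the $GL(V)$ case.
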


If $\dd$ is a partition of $2m$, then we denote by $\OO_{\dd}$ resp. by $\OO_{\dd}^{I}$ and $\OO_{\dd}^{I\!I}$, the corresponding nilpotent orbit(s) of $\hh \cong \so_{2m}$ associated to $\dd$ (see Section \ref{section22}). The following result was proved by Becker: 

\begin{proposition} \emph{(\cite[Proposition 3.6]{Tanja})} \label{descqu}
The symplectic reduction of $W$ by $G$ is 
$$
\mu^{-1}(0)/\!/G = \left\{
    \begin{array}{ll}
        \overline{\OO_{[2^n,1^{2(m-n)}]}} & \text{ if } m>n;\\
        \overline{\OO_{[2^{m-1},1^2]}} & \text{ if } m < n \text{ and } m \text{ is odd};\\
        \overline{\OO_{[2^{m}]}^{I}} \cup \overline{\OO_{[2^{m}]}^{I\!I}} & \text{ if } m \leq n \text{ and } m \text{ is even}.
    \end{array}
\right.
$$
\end{proposition}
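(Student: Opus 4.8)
The plan is to realize $\mu^{-1}(0)/\!/G$ explicitly as the image of the quotient morphism inside $\hh=\so(E)$ and then identify that image with the prescribed orbit closure(s). First I would make the quotient morphism concrete. By the first fundamental theorem of invariant theory for $G=Sp(V)$ acting on $W\cong V^{\oplus 2m}$, the algebra $\CC[W]^{G}$ is generated by the quadratic invariants $w\mapsto \omega_V(w(e),w(e'))$, where $\omega_V$ denotes the symplectic form on $V$; as $\omega_V$ is skew-symmetric, these assemble into the $G$-invariant morphism $w\mapsto \leftexp{t}{w}\circ w\in\so(E)=\hh$. This realizes $W/\!/G$ as a closed subvariety of $\hh$ and $\mu^{-1}(0)/\!/G$ as the image $\overline{\{\,\leftexp{t}{w}\circ w\ \mid\ w\in\mu^{-1}(0)\,\}}$. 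The inclusion into a square-zero orbit closure is then immediate from the defining equation: on $\mu^{-1}(0)$ we have $w\circ\leftexp{t}{w}=0$, so $(\leftexp{t}{w}\circ w)^2=\leftexp{t}{w}\circ(w\circ\leftexp{t}{w})\circ w=0$. Hence every point of the image is a square-zero element of $\so(E)$, and therefore lies in some $\overline{\OO_{[2^N,1^{2(m-N)}]}}$; it remains to pin down the maximal $N$ that occurs.

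Next I would compute the rank of $\leftexp{t}{w}\circ w$. Writing $S:=\Im(w)\subset V$, one checks that $\Ker(\leftexp{t}{w})=S^{\perp_{\omega_V}}$, so $\rk(\leftexp{t}{w}\circ w)=\rk(w)-\dim(S\cap S^{\perp_{\omega_V}})=\dim\bigl(S/\mathrm{rad}(\omega_V|_S)\bigr)$, which is \emph{always even}. Moreover the moment-map condition $w\circ\leftexp{t}{w}=0$ means $\Im(\leftexp{t}{w})\subset\Ker(w)$, and since $\Im(\leftexp{t}{w})=\Ker(w)^{\perp_q}$ this says precisely that $\Ker(w)$ is coisotropic for $q$, forcing $\rk(w)=\dim\Ker(w)^{\perp_q}\leq m$. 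Thus $N=\rk(\leftexp{t}{w}\circ w)$ is an even integer with $N\leq\min(m,n)$, and its maximum is $n$ when $m>n$, $m$ when $m\leq n$ and $m$ is even, and $m-1$ when $m\leq n$ and $m$ is odd, which are exactly the partitions $[2^n,1^{2(m-n)}]$, $[2^{m}]$, and $[2^{m-1},1^2]$ appearing in the statement.

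For the reverse inclusion I would either exhibit, for a generic element $X$ of the top orbit, a point $w\in\mu^{-1}(0)$ with $\leftexp{t}{w}\circ w=X$ (starting from the isotropic subspace $\Im(X)$ and an isomorphism onto a symplectic subspace of $V$ of the appropriate dimension), or else argue by dimensions: on each irreducible component of $\mu^{-1}(0)$ the image is closed and irreducible, contains the dense top orbit just identified, and has the expected dimension by Proposition \ref{dimmudezero33}, hence equals the full orbit closure. Finally, the reducibility for $m\leq n$ with $m$ even reflects that $[2^m]$ is a \emph{very even} partition in type $D_m$, so $\OO_{[2^m]}$ splits into the two $SO(E)$-orbits $\OO_{[2^m]}^{I}$ and $\OO_{[2^m]}^{I\!I}$, exchanged by $O(E)\setminus SO(E)$; I would then match the two irreducible components of $\mu^{-1}(0)$ from Proposition \ref{dimmudezero33} to the two orbit closures, using Remark \ref{veryuseful} to see that an element of $O(E)$ outside $H$ swaps both the components and the two orbits.

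The main obstacle is precisely this last step. Establishing that the two components of $\mu^{-1}(0)$ map \emph{onto the two distinct} $SO(E)$-orbit closures, rather than both onto a single one, requires the very-even orbit theory in type $D$ together with careful bookkeeping of the $O(E)$ versus $SO(E)$ distinction (so that the swap is genuinely realized by $O(E)\setminus SO(E)$ and not already by $H$). By comparison, the rank-and-parity computation, although it is the technical heart of the odd case, is comparatively routine once the identifications $\Ker(\leftexp{t}{w})=\Im(w)^{\perp_{\omega_V}}$ and $\Im(\leftexp{t}{w})=\Ker(w)^{\perp_q}$ are in place.
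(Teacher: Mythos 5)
The paper does not actually prove this proposition: it is imported wholesale from Becker (\cite[Proposition 3.6]{Tanja}), so there is no internal argument to compare yours against. Your proof is correct, and it is in effect the symplectic analogue of the paper's own proof of the $GL(V)$ case (Proposition \ref{descQuotient}): the inclusion ``$\subset$'' comes from the moment-map equation forcing square-zero images, and ``$\supset$'' from exhibiting explicit preimages of points in the top orbit. The extra ingredients you supply --- the identities $\Ker(\leftexp{t}{w})=\Im(w)^{\perp_{\omega_V}}$ and $\Im(\leftexp{t}{w})=\Ker(w)^{\perp_q}$, hence the evenness of $\rk(\leftexp{t}{w}\circ w)$ and the bound $\rk(w)\leq m$ from the isotropy of $\Ker(w)^{\perp_q}$ --- are exactly what replaces the trivial rank bound of the $GL$ setting, and they are right. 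What this buys is a self-contained treatment that makes Section \ref{morppquotientSpn} independent of \cite{Tanja} and uniform with Section \ref{sectionavecX}.

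Two points of execution. First, of your two alternatives for surjectivity, only the explicit construction is safe: Proposition \ref{dimmudezero33} gives the dimension of $\mu^{-1}(0)$, not of its image, so the dimension count would require the generic fiber dimension (Proposition \ref{fibreUnsymp}, which in the paper comes \emph{after} this proposition), and in the very even case dimension alone can never distinguish $\overline{\OO_{[2^m]}^{I}}$ from $\overline{\OO_{[2^m]}^{I\!I}}$. Second, what you call the main obstacle is not one for the statement at hand. For an arbitrary square-zero $X\in\so(E)$ of rank $N\leq\min(m,n)$, the form $q(X\cdot,\cdot)$ descends to a nondegenerate skew form on $E/\Ker(X)$, and composing the projection $E\to E/\Ker(X)$ with a symplectic embedding into $(V,\omega_V)$ gives $w$ with $\leftexp{t}{w}\circ w=X$ (up to a sign, which is harmless since $\Im(-X)=\Im(X)$ puts $-X$ in the same $SO(E)$-orbit as $X$) and $w\in\mu^{-1}(0)$, because $\Ker(w)^{\perp_q}=\Ker(X)^{\perp_q}=\Im(X)\subset\Ker(X)=\Ker(w)$. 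Running this over every $X$ of maximal rank hits both very even orbits simultaneously, so the closed subvariety $\mu^{-1}(0)/\!/G$ contains their union without your having to decide which component of $\mu^{-1}(0)$ dominates which closure; that finer matching is only needed later in the paper, where it is fixed by the harmless convention ``up to the exchange of $X_I$ and $X_{I\!I}$''.
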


\begin{corollaire}
The orbits for the adjoint action of $H$ on $\mu^{-1}(0)/\!/G$ are
\begin{itemize} \renewcommand{\labelitemi}{$\bullet$}
\item $U_i:=\OO_{[2^i,1^{2(m-i)}]}$, for $i=0,2, \ldots,n$,  if $m>n$; 
\item $U_i:=\OO_{[2^i,1^{2(m-i)}]}$, for $i=0,2, \ldots, m-1$, if $m<n$ and $m$ is odd;
\item $U_i:=\OO_{[2^i,1^{2(m-i)}]}$, for $i=0,2, \ldots, m-2$, and $U_{m}^{I}:=\OO_{[2^{m}]}^{I}$, $U_{m}^{I\!I}:=\OO_{[2^{m}]}^{I\!I}$, if $m \leq n$ and $m$ is even.
\end{itemize}
\end{corollaire}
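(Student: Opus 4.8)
The plan is to deduce the corollary directly from Proposition \ref{descqu}, which identifies each irreducible component of $\mu^{-1}(0)/\!/G$ as the closure of a specific nilpotent orbit, by combining it with the standard description of the adjoint $H$-orbits lying inside such a closure. So the content of the corollary is essentially an application of the orbit parametrization and closure relations already recorded in Section \ref{section22}, and no new geometry on $\mu^{-1}(0)$ is needed.

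First I would recall that since every $f \in \mu^{-1}(0)/\!/G$ satisfies $f^2=0$, the only partitions of $2m$ that can occur are those with all parts at most $2$, i.e. of the form $[2^i,1^{2(m-i)}]$ for $0 \leq i \leq m$. For such a partition to index a nilpotent orbit of $\hh \cong \so_{2m}$, the even part $2$ must occur with even multiplicity, so $i$ must be even; here $n=\dim(V)$ is itself even because $V$ is symplectic, so the maximal multiplicities $n$ and $m-1$ (the latter for $m$ odd) appearing in Proposition \ref{descqu} are indeed even. Moreover the partition $[2^m]$ (the case $i=m$, which forces $m$ even) is very even, hence corresponds to \emph{two} distinct $SO_{2m}=H$-orbits $\OO_{[2^m]}^{I}$ and $\OO_{[2^m]}^{I\!I}$, whereas every admissible $[2^i,1^{2(m-i)}]$ with $i<m$ gives a single $H$-orbit.

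Next I would invoke the closure ordering for nilpotent orbits in classical Lie algebras (the dominance order on partitions, see \cite{CoMc}). Within the family $\{[2^i,1^{2(m-i)}]\}_i$ the dominance order is simply the total order given by $i$, so $\overline{\OO_{[2^a,1^{2(m-a)}]}}$ is the disjoint union of the $\OO_{[2^i,1^{2(m-i)}]}$ over all even $i \leq a$, with the top orbit split into two when $a=m$. It then only remains to substitute, in each of the three cases, the maximal partition supplied by Proposition \ref{descqu}: $[2^n,1^{2(m-n)}]$ when $m>n$; $[2^{m-1},1^2]$ when $m<n$ and $m$ is odd; and the very even $[2^m]$ (two components) when $m \leq n$ and $m$ is even. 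Reading off the even values of $i$ below the maximal one yields exactly the three lists of orbits in the statement.

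The only real subtlety, and the step I would treat most carefully, is the very even case $m \leq n$ with $m$ even. There one must check that both $\overline{\OO_{[2^m]}^{I}}$ and $\overline{\OO_{[2^m]}^{I\!I}}$ have the same boundary, consisting precisely of the (non-very-even, hence unsplit) orbits $\OO_{[2^i,1^{2(m-i)}]}$ for even $i \leq m-2$, so that the union $\overline{\OO_{[2^m]}^{I}} \cup \overline{\OO_{[2^m]}^{I\!I}}$ decomposes into the $U_i$ for $i=0,2,\ldots,m-2$ together with the two top orbits $U_m^{I}$ and $U_m^{I\!I}$, with no orbit counted twice and none omitted. Everything else reduces to the bookkeeping of evenness constraints already justified above.
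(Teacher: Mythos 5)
Your proposal is correct and follows exactly the route the paper leaves implicit: the corollary is stated without proof precisely because it is an immediate consequence of Proposition \ref{descqu} together with the classification of nilpotent orbits in $\so_{2m}$ recalled in Section \ref{section22} (partitions of $2m$ with even parts of even multiplicity, dominance/closure order, and the splitting of the very even partition $[2^m]$ into $\OO_{[2^m]}^{I}$ and $\OO_{[2^m]}^{I\!I}$). Your careful treatment of the very even case, checking that the common boundary consists of the unsplit orbits $\OO_{[2^i,1^{2(m-i)}]}$ for even $i \leq m-2$, matches the nesting $\overline{U_{m-2}}=\overline{U_{m}^{I}} \cap \overline{U_{m}^{I\!I}}$ asserted right after the corollary.
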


The closures of the nilpotent orbits $U_i$ are nested in the following way:
\begin{equation*}
\left\{
    \begin{array}{ll}
        \{0\}=\overline{U_0} \subset \overline{U_2} \subset \cdots  \subset \overline{U_n} &\text{ if $m>n$;} \\
         \{0\}=\overline{U_0} \subset \overline{U_2} \subset \cdots \subset \overline{U_{m-1}} &\text{ if $m<n$ and $m$ is odd;} \\
         \{0\}=\overline{U_0} \subset \overline{U_2} \subset \cdots \subset \overline{U_{m-2}}=\overline{U_{m}^{I}} \cap \overline{U_{m}^{I\!I}}&\text{ if $m \leq n$ and $m$ is even.}    
    \end{array}
\right.
\end{equation*}

If $m>n$ or $m$ is odd resp. if $m \leq n$ and $m$ is even, then the symplectic reduction $\mu^{-1}(0)/\!/G$  is the closure of a nilpotent orbit resp. the union of two closures of nilpotent orbits, and thus the irreducible components of $\mu^{-1}(0)/\!/G$ are symplectic varieties (see Section \ref{gen}). If $m>n$, then $\mu^{-1}(0)/\!/G$ is of dimension $2mn-n(n+1)$, and its singular locus is $\overline{U_{n-2}}$. On the other hand, if $m \leq n$, then each irreducible component of $\mu^{-1}(0)/\!/G$ is of dimension $m(m-1)$, and the singular locus of $\mu^{-1}(0)/\!/G$ is $\overline{U_{m-2}}$ resp. $\overline{U_{m-3}}$, when $m$ is even resp. when $m$ is odd. The dimension of the irreducible components of $\mu^{-1}(0)/\!/G$ is given by \cite[Corollary 6.1.4]{CoMc}, and the singular locus of $\mu^{-1}(0)/\!/G$ is given by \cite[Theorem 2]{KP3}.

We are now interested in the Hilbert function of the general fibers of the quotient morphism for each irreducible component of $\mu^{-1}(0)/\!/G$. We will distinguish between the following cases: 
\begin{itemize}
\item If $m>n$, then $\mu^{-1}(0)/\!/G$ is irreducible, and we denote by $h_0$ the Hilbert function of the general fibers of the quotient morphism $\nu:\ \mu^{-1}(0) \rightarrow \mu^{-1}(0)/\!/G$. By Proposition \ref{dimmudezero33}, the dimension of these fibers is $\frac{1}{2}n(n+1)$. 
\item If $m \leq n$ and $m$ is even, then by Proposition \ref{dimmudezero33}, the zero fiber $\mu^{-1}(0)$ is the union of two irreducible components that we denote by $X_I$ and by $X_{I\!I}$. Let $\nu_I:\ X_I \rightarrow Y_I$ and $\nu_{I\!I}:\ X_{I\!I} \rightarrow Y_{I\!I}$ be the quotient morphisms. Up to the exchange of $X_I$ and $X_{I\!I}$, we can suppose that $Y_I=\overline{U_{m}^{I}}$ and $Y_{I\!I}=\overline{U_{m}^{I\!I}}$. The orthogonal group $O(E)$ acts transitively on $U_{m}^{I} \cup U_{m}^{I\!I}$, hence the general fibers of $\nu_{I}$ and $\nu_{I\!I}$ are isomorphic. In particular, these fibers have the same Hilbert function, denoted by $h_0$, and the same dimension, which is $mn-\frac{1}{2}m(m-1)$.  
\item If $m <n$ and $m$ is odd, then $\mu^{-1}(0)/\!/G$ is irreducible, and we denote by $h_0$ the Hilbert function of the general fibers of the quotient morphism $\nu:\ \mu^{-1}(0) \rightarrow \mu^{-1}(0)/\!/G$. These fibers being reducible, determining $h_0$ is is more complicated than in the previous cases (except the case $m=1$ which is trivial). From now on, we will always exclude the case where $m <n$ and $m$ is odd.
\end{itemize}

If $m < n$ and $m$ is even, then we denote 
\begin{equation}  \label{defHsymp}
G':=\left\{ \begin{bmatrix}
M  &0_{n-m,m} \\
0_{m,n-m}  & I_{m} 
\end{bmatrix},\ M \in Sp_{n-m} \right\} \cong Sp_{n-m},
\end{equation}
which is a reductive algebraic subgroup of $G \cong Sp_n$. The proof of the next proposition is analogous to the proof of Proposition \ref{fibreUnsymp1}:

\begin{proposition} \label{fibreUnsymp}
If $m>n$, then the general fibers of the quotient morphism $\nu:\ \mu^{-1}(0) \rightarrow \mu^{-1}(0)/\!/G$ are isomorphic to $G$.\\
If $m=n$, then the general fibers of the quotient morphisms $\nu_I:\ X_I \rightarrow Y_I$ and $\nu_{I\!I}:\ X_{I\!I} \rightarrow Y_{I\!I}$ are isomorphic to $G$.\\
If $m<n$ and $m$ is even, then the general fibers of $\nu_I$ and $\nu_{I\!I}$ are isomorphic to $G/G'$, where $G' \subset G$ is the subgroup defined by (\ref{defHsymp}). 
\end{proposition}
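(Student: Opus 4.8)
The plan is to mimic the proof of Proposition \ref{fibreUnsymp1} from the $GL(V)$ case, adapting each step to the symplectic/orthogonal setting via Luna's slice theorem. The key idea in all three cases is to identify a well-chosen point in a general fiber of $\nu$ (resp. $\nu_I$), exhibit its $G$-orbit as the unique closed orbit in that fiber, compute its stabilizer, and then match dimensions using Proposition \ref{dimmudezero33}. Concretely, for each case I would fix a distinguished element $w_0 \in \mu^{-1}(0)$ (the analogue of $(u_1^l,u_2^l)$ from \eqref{ll1}) mapping to a generic point $f$ of the relevant nilpotent orbit closure, and show $G.w_0$ is closed in the fiber.

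First I would treat the case $m>n$, where $N=n$ and $\mu^{-1}(0)/\!/G=\overline{\OO_{[2^n,1^{2(m-n)}]}}$. Here I expect $\Stab_G(w_0)=\{Id\}$: since the map $w \mapsto w\circ \leftexp{t}{w}$ recovers $f$ of maximal admissible rank $n$, and $G=Sp(V)$ with $\dim(V)=n$, a generic $w_0 \in \Hom(E,V)$ of full rank $n$ on a suitable subspace has trivial centralizer in $Sp(V)$. One checks directly (by an explicit matrix computation analogous to the $GL$ case) that the only element $g\in Sp(V)$ fixing $w_0$ is the identity. Then $\dim(G.w_0)=\dim(G)=\frac{1}{2}n(n+1)$ matches the generic fiber dimension computed from Proposition \ref{dimmudezero33}, so the fiber, containing a unique closed orbit of full dimension, equals $G.w_0 \cong G$.

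For the case $m=n$ (even), the argument is identical in spirit: the two components $X_I,X_{I\!I}$ map onto $Y_I,Y_{I\!I}$, and by symmetry under $O(E)$ it suffices to analyze one of them. Again $\dim(G)=\frac{1}{2}n(n+1)=mn-\frac{1}{2}m(m-1)$ when $m=n$, so a trivial-stabilizer computation shows each general fiber is isomorphic to $G$. For the case $m<n$ with $m$ even, one applies Luna's result \cite[\S I.6.2.5, Theorem 10]{SB} exactly as in Proposition \ref{fibreUnsymp1}: closedness of $G.w_0$ is equivalent to closedness of $C_G(G').w_0$, where $G'\cong Sp_{n-m}$ is defined by \eqref{defHsymp}. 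I would compute $C_G(G')$ (a product of $Sp_m$ with the symplectic group on the complementary block), verify that $C_G(G').w_0$ is closed by an explicit description, deduce $G.w_0$ is the unique closed orbit, and identify $\Stab_G(w_0)=G'$. Since $\dim(G/G')=\dim Sp_n - \dim Sp_{n-m}=mn-\frac{1}{2}m(m-1)$ matches the generic fiber dimension, we conclude $\nu^{-1}(f)\cong G/G'$.

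The main obstacle will be the explicit stabilizer computations in the symplectic setting: unlike the $GL$ case where the matrices $(u_1^l,u_2^l)$ were transparent, here one must keep track of the symplectic form on $V$ and the quadratic form $q$ on $E$ simultaneously, and verify that the distinguished $w_0$ genuinely lands in $\mu^{-1}(0)$ (i.e.\ $w_0\circ\leftexp{t}{w_0}=0$) while having the claimed centralizer $G'$. Choosing $w_0$ so that its image is a Lagrangian-compatible subspace of $V$ and its "kernel direction" in $E$ is isotropic for $q$ is the delicate point; once such a normal form is fixed, the centralizer calculation and the Luna-slice closedness argument proceed routinely by block decomposition, and the dimension matching then forces the stated isomorphisms.
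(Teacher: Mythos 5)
Your proposal is correct and matches the paper's own treatment: the paper gives no separate argument for this proposition, stating only that the proof is analogous to that of Proposition \ref{fibreUnsymp1}, which is precisely the adaptation you carry out (a distinguished point $w_0$ in the fiber, Luna's centralizer criterion from \cite[\S I.6.2.5, Theorem 10]{SB} to get closedness of $G.w_0$, identification of the stabilizer with $G'$ or with the trivial group, and the dimension match against the general fiber dimension from Proposition \ref{dimmudezero33}). Nothing in your outline deviates from, or adds a gap relative to, the intended argument.
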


\begin{corollaire} \label{fctHSp2} 
The Hilbert function $h_0$ defined above is given by:  
$$\forall M \in \Irr(G),\ h_0(M)=\left\{
    \begin{array}{ll}
       \dim(M)  &\text{ if } m \geq n;  \\
       \dim(M^{G'})    &\text{ if } m <n \text{ and $m$ is even;} 
    \end{array}
\right.
$$
where $G' \subset G$ is the subgroup defined by (\ref{defHsymp}). 
\end{corollaire}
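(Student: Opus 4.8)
The plan is to deduce Corollary \ref{fctHSp2} directly from the description of the general fibers in Proposition \ref{fibreUnsymp}, using the standard relationship between the coordinate ring of a homogeneous space and the representation theory of the acting group. Recall that the Hilbert function $h_0$ assigns to each $M \in \Irr(G)$ the multiplicity of $M$ in $\CC[F]$, where $F$ is a general fiber of $\nu$ (or of $\nu_I$, which suffices by the symmetry noted before the statement).

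First I would treat the case $m \geq n$. Here Proposition \ref{fibreUnsymp} gives $F \cong G$ as a $G$-variety, so $\CC[F] \cong \CC[G]$ with its left regular $G$-action. The Peter--Weyl decomposition (algebraic version for reductive groups) gives
\begin{equation*}
\CC[G] \cong \bigoplus_{M \in \Irr(G)} M \otimes M^*,
\end{equation*}
whence each $M$ occurs with multiplicity $\dim(M^*) = \dim(M)$. This yields $h_0(M) = \dim(M)$, as claimed. One subtlety is that when $m = n$ the reduction is reducible, but the statement only concerns the general fiber of $\nu_I$ (equivalently $\nu_{I\!I}$), which by Proposition \ref{fibreUnsymp} is again $\cong G$, so the same computation applies verbatim.

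Next I would treat the case $m < n$ with $m$ even, where Proposition \ref{fibreUnsymp} gives $F \cong G/G'$ with $G'$ the subgroup \eqref{defHsymp}. Here $\CC[G/G'] \cong \CC[G]^{G'}$, the functions invariant under the right $G'$-action, and taking $G'$-invariants in the Peter--Weyl decomposition gives
\begin{equation*}
\CC[G/G'] \cong \bigoplus_{M \in \Irr(G)} M \otimes (M^*)^{G'}.
\end{equation*}
Thus $M$ occurs with multiplicity $\dim((M^*)^{G'})$. To finish I must identify this with $\dim(M^{G'})$: since $G' \cong Sp_{n-m}$ preserves the restriction of the symplectic form on $V$, the $G'$-module $V$ is self-dual, and therefore every $M \in \Irr(G) = \Irr(Sp_n)$ is self-dual as a $G'$-module; consequently $\dim((M^*)^{G'}) = \dim(M^{G'})$, giving $h_0(M) = \dim(M^{G'})$.

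The main obstacle, and the only point requiring genuine care, is the passage from $\dim((M^*)^{G'})$ to $\dim(M^{G'})$ in the second case: one must verify that taking $G'$-fixed vectors commutes with dualization for these particular modules. This follows from self-duality, which holds because $Sp_n$-modules are all self-dual (indeed $G = Sp(V)$ preserves a form, so $V \cong V^*$ as a $G$-module, hence as a $G'$-module), so $M^* \cong M$ as $G'$-representations and the fixed-space dimensions agree. The remaining steps are formal applications of Peter--Weyl and the identification $\CC[G/G'] = \CC[G]^{G'}$, both standard for reductive $G$ and reductive $G' \subset G$. I note that the proof is entirely parallel to that of Corollary \ref{fctH2} in the $GL(V)$ case, the only difference being the self-duality argument, which there is replaced by the identification $\Gr(m-h_0(V),V'^*) \cong \Gr(N,V')$.
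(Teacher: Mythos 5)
Your proof is correct and coincides with the paper's (implicit) argument: the corollary is stated as an immediate consequence of Proposition \ref{fibreUnsymp}, exactly via the algebraic Peter--Weyl decomposition $\CC[G] \cong \bigoplus_{M \in \Irr(G)} M \otimes M^*$ and the identification $\CC[G/G'] \cong \CC[G]^{G'}$ that you use. One small remark: your self-duality detour is superfluous, since for any reductive subgroup $G'$ and any finite-dimensional module $M$ one has $\dim((M^*)^{G'}) = \dim(M^{G'})$, simply because $M$ and $M^*$ contain the trivial $G'$-module with the same multiplicity.
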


\subsection{The reduction principle for the main component}  \label{sssympSpn}
In this section, we give the guidelines to prove the reduction principle when $G=Sp(V)$ (Proposition \ref{reduction37}). The strategy is the same as for $GL(V)$ (see Section \ref{MropRRED}), but as the symplectic reduction $\mu^{-1}(0)/\!/G$ is reducible when $m \leq n$ and $m$ is even, it seems necessary to give some additional details.

As $\mu^{-1}(0)$ is a $G \times H$-stable subvariety of $W$, it follows from \cite[Lemma 3.3]{Br} that the invariant Hilbert scheme
\begin{equation*}
\HH:=\Hilb_{h_0}^{G}(\mu^{-1}(0)) 
\end{equation*}
is a $H$-stable closed subscheme of $\Hilb_{h_0}^G(W)$. 
As we aim at constructing desingularizations of the irreducible components of $\mu^{-1}(0)/\!/G$, we consider the two $H$-stable closed subschemes $\HHx:=\Hilb_{h_0}^{G}(X_I)$ and $\HHy:=\Hilb_{h_0}^{G}(X_{I\!I})$ instead of $\HH$ when $m \leq n$ and $m$ is even. Let us note that if we fix $y_0 \in O(E) \backslash SO(E)$ and make $H$ act on $X_{I\!I}$ by $(y_0y{y_0}^{-1}).x$ for every $y \in H$ and every $x \in X_{I\!I}$, then $\phi:\ X_I \rightarrow X_{I\!I},\ x \mapsto y_0.x$ is a $G \times H$-equivariant isomorphism, and thus $\HHx \cong \HHy$ as a $H$-scheme. We denote by $\HHxp$ resp. by $\HHyp$, the main component of $\HHx$ resp. of $\HHy$. We always have the (set-theoretic) inclusion $\HHx \cup \HHy \subset \HH$, but this may not be an equality. 
If $m>n$, then $\mu^{-1}(0)/\!/G$ is irreducible, and we denote by $\HHp$ the main component of $\HH$.

The scheme $\Hilb_{h_0}^{G}(W)$ was studied in \cite{Terp}. In particular, we obtained

\begin{proposition} \label{moorppgr2}  
\emph{(\cite[\S 1.5.1]{Terp})} Let $h_0$ be the Hilbert function given by Corollary \ref{fctHSp2}, and let $H=SO(E)$ acting naturally on $\Gr(2m-h_0(V^*),E)$.  
Then there exists a $H$-equivariant morphism
$$\rho  : \ \Hilb_{h_0}^G(W)  \rightarrow  \Gr(2m-h_0(V^*),E)$$
given on closed points by $[Z] \mapsto \Ker(f_Z)$, where $f_Z:\ E \cong \Mor^G(W,V^*) \rightarrow \Mor^G(Z,V^*)$ is the restriction map. 
\end{proposition}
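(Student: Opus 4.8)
The plan is to realise $\rho$ as the morphism classifying the kernel of a globalised version of the restriction maps $f_Z$, using the universal family over $\Hilb_{h_0}^G(W)$ together with the first fundamental theorem for $Sp(V)$. Write $\pi:\ \mathcal{Z} \to \Hilb_{h_0}^G(W)$ for the universal family. Since $\pi$ is flat with every fibre of Hilbert function $h_0$, the theory of Alexeev and Brion \cite{AB,Br} gives a $G$-equivariant decomposition $\pi_*\OO_{\mathcal{Z}} \cong \bigoplus_{M \in \Irr(G)} M \otimes \mathcal{F}_M$ in which each multiplicity sheaf $\mathcal{F}_M$ is locally free of rank $h_0(M)$. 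Taking $V^*$-covariants, $(\pi_*\OO_{\mathcal{Z}} \otimes V^*)^G = \mathcal{F}_V$ (recall $V \cong V^*$ as a $Sp(V)$-module, so $\mathcal{F}_{V^*} = \mathcal{F}_V$), a vector bundle of rank $h_0(V) = h_0(V^*)$ whose fibre at a point $[Z]$ is $\Mor^G(Z,V^*)$; in particular this space has constant dimension $h_0(V^*)$ as $[Z]$ varies.

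Next I would build the bundle map whose fibrewise kernel will be $\rho$. The space $E$ is identified with the \emph{linear} equivariant maps $W \to V^*$: writing $W \cong V^{\oplus 2m}$ as a $G$-module, one has $(W^* \otimes V^*)^G = E \otimes (V^* \otimes V^*)^{Sp(V)} = E$, the last factor being the line spanned by the symplectic form. Restricting these linear covariants to $\mathcal{Z}$ and projecting to the $V^*$-isotypic part produces an $\OO$-linear, $H$-equivariant morphism of sheaves
$$\Phi:\ E \otimes \OO_{\Hilb_{h_0}^G(W)} \longrightarrow \mathcal{F}_V$$
whose value at $[Z]$ is exactly $f_Z$.

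The heart of the argument is the surjectivity of $\Phi$; granting it, $\Ker(\Phi)$ is a subbundle of $E \otimes \OO$ of rank $2m - h_0(V^*)$, and its classifying map is the desired $H$-equivariant morphism $\rho:\ \Hilb_{h_0}^G(W) \to \Gr(2m-h_0(V^*),E)$, $[Z] \mapsto \Ker(f_Z)$. To prove surjectivity, first note that $\mathcal{F}_V$ is a quotient of $\OO \otimes_\CC \Mor^G(W,V^*)$, obtained by applying $(-\otimes V^*)^G$ to the surjection $\OO \otimes \CC[W] \twoheadrightarrow \pi_*\OO_{\mathcal{Z}}$ coming from the closed embedding $\mathcal{Z} \subset \Hilb_{h_0}^G(W) \times W$. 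Because $h_0$ assigns the value $1$ to the trivial representation, every fibre satisfies $\dim \CC[Z]^G = 1$, so $\CC[W]^G$ acts on $\pi_*\OO_{\mathcal{Z}}$ through $\OO$ via the induced Hilbert--Chow morphism. By the first fundamental theorem for the symplectic group (classical invariant theory, cf. \cite{Pro}), the module $\Mor^G(W,V^*)$ is generated over $\CC[W]^G$ by its degree-one part $E$. Combining these two facts, the image of $\OO \otimes \Mor^G(W,V^*)$ in $\mathcal{F}_V$ already coincides with the image of $\OO \otimes E$, i.e. $\Phi$ is surjective.

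I expect this surjectivity to be the only real obstacle. Its two ingredients, the symplectic FFT for covariants of type $V^*$ and the fact that invariant functions act through Hilbert--Chow, are exactly the computations carried out in \cite[\S 1.5.1]{Terp}; once they are in place, local freeness of $\mathcal{F}_V$, the subbundle criterion for the kernel, and the inheritance of $H$-equivariance from $\Phi$ are all formal. It is worth contrasting this with $G=GL(V)$, where $V$ and $V^*$ are inequivalent: that is precisely why Proposition \ref{moorppgr} produces a \emph{pair} of Grassmannians rather than the single one appearing here.
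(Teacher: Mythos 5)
Your construction is correct and is essentially the same as the one the paper invokes from \cite[\S 1.5.1]{Terp} (and its $GL(V)$ analogue in \cite[\S 4.4]{Terp1}): one sheafifies the restriction maps $f_Z$ via the locally free multiplicity sheaf $\mathcal{F}_{V^*}$ of the universal family, proves surjectivity of $E\otimes\OO \to \mathcal{F}_{V^*}$ using the first fundamental theorem (covariants of type $V^*$ generated in degree one over $\CC[W]^G$) together with the fact that invariants act through the base, and then takes the classifying map of the kernel subbundle. The $H$-equivariance and the identification of the fibrewise value with $\Ker(f_Z)$ follow formally, exactly as you say.
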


We identify $\Gr(2m-h_0(V^*),E)$ with $\Gr(h_0(V^*),E^*)$. By Corollary \ref{fctHSp2}, if either $m>n$ or $m \leq n$, $m$ even, then $h_0(V^*)=N:=\min(m,n)$. The non-degenerate quadratic form $q$ on $E$ gives a canonical isomorphism $E \cong E^*$. In particular, $q$ identifies with a non-degenerate quadratic form on $E^*$. For $i=0, \ldots,N$, we denote   
$$A_i:=\{L \in \Gr(N,E^*)\ |\ {q}_{|L} \text{ is of rank } i\}.$$
If $m>n$, then the $A_i$ are the $n+1$ orbits for the action of $H$ on $\Gr(n,E^*)$. \\
However, if $m \leq n$, then the $A_i$ are $H$-orbits for $i=1,\ldots,m$, but the isotropic Grassmannian $A_0=\OG(m,E^*)$ is the union of two $H$-orbits, denoted by $\OG^I$ and by $\OG^{I\!I}$, which are exchanged by the action of any element of $O(E) \backslash SO(E)$.\\
In any case, we have     
$$ \OG(N,E^*)=\overline{A_0} \subset \overline{A_1} \subset \cdots \subset \overline{A_N}=\Gr(N,E^*).$$
Let us now fix some notation: 
\begin{itemize} \renewcommand{\labelitemi}{$\bullet$}
\item $L_0 \in A_0$, and $P$ the parabolic subgroup of $H$ stabilizing $L_0$;
\item $W':=\Hom(E/L_0^{\perp},V)$, which identifies with a $G \times P$-module contained in $\mu^{-1}(0)$; and
\item $\HH':=\Hilb_{h_0}^{G}(W')$, and $\HH'^{\mathrm{main}}$ its main component.
\end{itemize}

It must be emphasized that, if either $m> n$ or $m\leq n$, $m$ even, then the Hilbert function of the general fibers of the quotient morphism  $W' \rightarrow W'/\!/G$ coincides with the Hilbert function $h_0$ of Corollary \ref{fctHSp2} (in particular, $\HH'^{\mathrm{main}}$ is well defined).

Proceeding as for Lemma \ref{versX0}, one may check that, if $m>n$ resp. if $m \leq n$ with $m$ even, then the morphism $\rho$ of Proposition \ref{moorppgr2} sends $\HHp$ resp. $\HHxp$ and $\HHyp$, onto $A_0$. More precisely, if $m \leq n$ and $m$ is even, then $\rho$ sends $\HHxp$ onto one of the irreducible component of $A_0$, and $\HHyp$ onto the other component. Up to the exchange of these two components, we can suppose that $\rho$ sends $\HHxp$ onto $\OG^I$, and $\HHyp$ onto $\OG^{I\!I}$.\\ 
It follows that the restriction of $\rho$ equips $\HHp$ resp. $\HHxp$, resp. $\HHyp$, with a structure of a $H$-homogeneous fiber bundle over $A_0$ resp. over $\OG^I$, resp. over $\OG^{I\!I}$. Hence, it is enough to determine the fiber $F_0$ over $L_0$ to determine $\HHp$ resp. $\HHxp$, resp. $\HHyp$. Proceeding as in Section \ref{MropRRED}, we obtain that $F_0$ is isomorphic to $\HH'^{\mathrm{main}}$ as a $P$-scheme. We deduce       

\begin{proposition} \label{reduction37}
With the above notation, we have the following $H$-equivariant isomorphisms:
\begin{itemize}
\item If $m>n$, then 
$$\HHp \cong H {\times}^{P} \HH'^{\mathrm{main}}.$$
\item If $m \leq n$, $m$ even, and $L_0 \in \OG^I$ resp. $L_0 \in \OG^{I\!I}$, then 
$$\HHxp \cong H {\times}^{P} \HH'^{\mathrm{main}} \text{ resp. $\HHyp \cong H {\times}^{P} \HH'^{\mathrm{main}}$.}$$
\end{itemize}
\end{proposition}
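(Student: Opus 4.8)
The plan is to transport the three-step argument of Section~\ref{MropRRED} to the symplectic setting, the only genuinely new feature being the reducibility of both $\mu^{-1}(0)/\!/G$ and the orthogonal Grassmannian $A_0$ when $m\leq n$ and $m$ is even. In every case the crux is that the $H$-equivariant morphism $\rho$ of Proposition~\ref{moorppgr2}, restricted to an \emph{irreducible} main component, factors through a single $H$-orbit; a dominant morphism from an irreducible $H$-variety onto a homogeneous space $H/P$ endows its source with the structure of a $H$-homogeneous fiber bundle, so that the total space is recovered from the scheme-theoretic fiber $F_0$ over the base point $L_0$ as $H\times^P F_0$. Thus, once the image of each main component and the $P$-scheme $F_0$ are identified, the stated isomorphisms follow formally, exactly as (\ref{ut1}) and (\ref{moniox2}) were assembled for $GL(V)$.

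First I would establish, in analogy with Lemma~\ref{versX0}, that $\rho$ sends $\HHp$ (for $m>n$), resp.\ $\HHxp$ and $\HHyp$ (for $m\leq n$, $m$ even), onto $A_0$. Since $\nu$ is flat over the open orbit, the Hilbert-Chow morphism is an isomorphism there, so over a generic point $f_N$ of the open orbit of the relevant irreducible component of $\mu^{-1}(0)/\!/G$ there is a unique point $[Z_N]\in\HH$, necessarily stable under $Q:=\Stab_H(f_N)$. By $H$-equivariance $\rho([Z_N])$ is a $Q$-fixed point of $\Gr(N,E^*)$, and I would check that every such fixed point lies in the closed orbit $A_0=\OG(N,E^*)$; since $A_0$ is $H$-stable and closed and contains the image of a dense open subset of the main component, the whole main component maps into $A_0$. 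In the reducible case the same fixed-point analysis must be refined to show that $\HHxp$ and $\HHyp$ are sent to the two distinct components $\OG^I$ and $\OG^{I\!I}$ of $A_0$; here one exploits that $O(E)\backslash SO(E)$ exchanges $X_I\leftrightarrow X_{I\!I}$ and $\OG^I\leftrightarrow\OG^{I\!I}$, so that the two labellings can be matched consistently.

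It then remains to identify $F_0$ as a $P$-scheme. Proceeding as in Lemma~\ref{fibrehil2s}, the scheme-theoretic fiber of $\rho_{|\HH}$ (resp.\ $\rho_{|\HHx}$) over $L_0$ is $P$-equivariantly isomorphic to $\HH'=\Hilb_{h_0}^{G}(W')$ with $W'=\Hom(E/L_0^\perp,V)$, reducing the problem to the Hilbert scheme of the smaller $G\times P$-module $W'$. The fiber $F_0$ of the restriction to the main component is a closed subscheme of this $\HH'$, and it is irreducible since the main component is irreducible and fibers over the irreducible base $A_0$ (resp.\ $\OG^I$, $\OG^{I\!I}$). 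A dimension count then gives $\dim F_0=\dim\HH'^{\mathrm{main}}$ — obtained by subtracting $\dim A_0$ from the known dimension of the main component — so that the irreducible $F_0\subset\HH'$ must coincide with $\HH'^{\mathrm{main}}$. Combining this with the fiber-bundle isomorphism yields $\HHp\cong H\times^P\HH'^{\mathrm{main}}$, and likewise $\HHxp\cong H\times^P\HH'^{\mathrm{main}}$ and $\HHyp\cong H\times^P\HH'^{\mathrm{main}}$.

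The hard part will be the reducible regime $m\leq n$ with $m$ even, where two difficulties absent for $GL(V)$ appear: one must argue component-by-component with $X_I,X_{I\!I}$ rather than with $\mu^{-1}(0)$ itself, and one must correctly match the two connected components of $\OG(m,E^*)$ with $\HHxp,\HHyp$ through the action of $O(E)\backslash SO(E)$. The $Q$-fixed-point computation pinning down which component each main component maps to, together with the dimension bookkeeping (the dimensions of $\OG(N,E^*)$ and of $\HH'^{\mathrm{main}}$ must conspire to force equality), is the most delicate step; the remaining assembly is formal.
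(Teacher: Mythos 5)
Your proposal follows the paper's own proof step for step: (i) an analogue of Lemma~\ref{versX0} via $Q$-fixed points, with the two components $\OG^I$ and $\OG^{I\!I}$ matched to $\HHxp$ and $\HHyp$ through the action of $O(E)\setminus SO(E)$; (ii) the $H$-homogeneous fiber bundle structure $H\times^P F_0$ coming from the $H$-equivariant map onto a single closed orbit; (iii) identification of the fiber $F_0$ over $L_0$ with $\HH'^{\mathrm{main}}$ via the analogue of Lemma~\ref{fibrehil2s} plus a dimension count. Steps (i) and (ii) are exactly what Section~\ref{sssympSpn} does, and your dimension bookkeeping is consistent.

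However, step (iii) as you state it has a genuine gap. You infer $F_0=\HH'^{\mathrm{main}}$ from: $F_0$ is a closed irreducible subscheme of $\HH'=\Hilb_{h_0}^{G}(W')$ with $\dim F_0=\dim\HH'^{\mathrm{main}}$. This inference is invalid, because $\HH'$ need not be irreducible, and an irreducible closed subvariety of the right dimension could a priori sit inside a \emph{different} irreducible component of $\HH'$ of dimension $\geq\dim\HH'^{\mathrm{main}}$. This is not a hypothetical concern in this paper: Proposition~\ref{casSympn1} and the end of Section~\ref{GLsympN1} exhibit invariant Hilbert schemes of precisely this kind with extra components, some of dimension strictly larger than the main one. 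What is missing is a containment comparing $F_0$ with $\HH'^{\mathrm{main}}$; this is exactly the point where the model argument of Section~\ref{MropRRED} asserts that the fiber $F$ is a subvariety of $\HH'^{\mathrm{main}}$ \emph{before} invoking dimensions. A clean repair: for $y$ general in $W'/\!/G$, the point $y$ lies in the open $H$-orbit of $\mu^{-1}(0)/\!/G$ (resp. of $Y_I$), and the fiber $\nu^{-1}(y)$ (resp. $\nu_I^{-1}(y)$) is scheme-theoretically contained in $W'$, hence is also a general fiber of $W'\to W'/\!/G$ --- here one uses the coincidence of the two Hilbert functions that the paper emphasizes. The corresponding points of $\gamma^{-1}$ of the open orbit therefore lie in $F_0$; since $F_0$ is closed in $\HH'$, this gives $\HH'^{\mathrm{main}}\subset F_0$, and now your dimension count together with the irreducibility of $F_0$ forces $F_0=\HH'^{\mathrm{main}}$. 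With this one insertion your argument is complete and agrees with the paper's.
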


\subsection{Proofs of Theorems A and B for $Sp(V)$}  \label{pprrthe}

In this section, we proceed as in Section \ref{proofGLn} to prove Theorems A and B when $G=Sp(V)$. Before going any further, let us mention that the case $n=2$, $m=3$ was already handled by Becker in \cite{Tanja2}. In this situation, $\mu^{-1}(0)/\!/G$ is a closure of a nilpotent orbit that admits two Springer desingularizations, and Becker showed that $\gamma:\ \Hilb_{h_0}^{G}(\mu^{-1}(0)) \rightarrow \mu^{-1}(0)/\!/G$ is a desingularization that dominates them both. To obtain this result, Becker first used the existence of natural morphisms from the invariant Hilbert scheme to Grassmannians to identify $\Hilb_{h_0}^{G}(\mu^{-1}(0))^{\mathrm{main}}$ with the total space of a homogeneous line bundle over a Grassmannian, and then she showed that $\Hilb_{h_0}^{G}(\mu^{-1}(0))=\Hilb_{h_0}^{G}(\mu^{-1}(0))^{\mathrm{main}}$ by computing the tangent space of $\Hilb_{h_0}^{G}(\mu^{-1}(0))$ at every point of the main component. 

Let us now recall the following result:

\begin{theoreme} \emph{(\cite[\S 1, Theorem]{Terp1})} \label{Terpy2}
Let $G=Sp(V)$, let $W=\Hom(E,V)$, and let $h_W$ be the Hilbert function of the general fibers of the quotient morphism $W \rightarrow W/\!/G$. 
We denote $n:=\dim(V)$, $e:=\dim(E)$, and we denote by $Y_0$ the blow-up of $W/\!/G=\Lambda^2(E^*)^{\leq n}:=\{Q \in \Lambda^2(E^*)\ |\ \rk(Q) \leq n\}$ at $0$. 
In the following cases, the invariant Hilbert scheme $\HH':=\Hilb_{h_W}^{G}(W)$ is a smooth variety, and the Hilbert-Chow morphism is the succession of blow-up described as follows:
\begin{itemize}
\item if $n \geq 2 e-2$, then $\HH'\cong W/\!/G=\Lambda^2(E^*)$;
\item if $e>n=2$ or $e=n=4$, then $\HH' \cong Y_0$;
\item if $e>n=4$, then $\HH'$ is isomorphic to the blow-up of $Y_0$ along the strict transform of $\Lambda^2(E^*)^{\leq 2}$.
\end{itemize}
\end{theoreme}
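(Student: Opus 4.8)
My plan combines classical invariant theory, a covariant morphism to a Grassmannian together with a reduction principle of the type of Proposition~\ref{reduction37}, and a tangent-space computation; the statement is logically prior to, but structurally parallel with, the results used in Section~\ref{pprrthe}.

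First I would fix the classical invariant theory. Since $V\cong V^*$ as a $G$-module, we have $W=\Hom(E,V)\cong V\otimes E^*$, and the first fundamental theorem for $Sp(V)$ says that $\CC[W]^G$ is generated by the quadratic invariants $w\mapsto\langle w(x),w(y)\rangle$, for $x,y\in E$, where $\langle\cdot,\cdot\rangle$ is the symplectic form on $V$. This identifies the quotient morphism with $\nu:\ W\to\Lambda^2(E^*)$, $w\mapsto(x\wedge y\mapsto\langle w(x),w(y)\rangle)$, whose image is the determinantal variety $\Lambda^2(E^*)^{\leq n}=W/\!/G$ of skew forms of rank at most $n$, cut out by Pfaffian conditions via the second fundamental theorem. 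I would then read off the Hilbert function $h_W$ from a generic fibre of $\nu$: over a skew form of maximal rank this fibre contains a unique closed $G$-orbit whose stabiliser is a smaller symplectic group, exactly as in Proposition~\ref{fibreUnsymp}, which gives $h_W(M)=\dim(M^{G''})$ for that subgroup $G''$.

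Next I would exploit the covariant morphism $\rho$ of Proposition~\ref{moorppgr2}, built from the restriction of $\Mor^G(W,V)\cong E$, to map $\HH'=\Hilb_{h_W}^G(W)$ to a Grassmannian of subspaces of $E$ determined by the generic rank. Arguing as for Lemma~\ref{versX0}, the main component is carried onto the closed orbit, an isotropic Grassmannian, and the scheme-theoretic fibre over a distinguished point is itself an invariant Hilbert scheme for a smaller module $W''=\Hom(\overline{E},V)$ with $\dim\overline{E}<\dim E$. A reduction principle parallel to Proposition~\ref{reduction37} then presents the main component as a homogeneous bundle over that Grassmannian with fibre the main component of $\Hilb^G(W'')$, reducing the three numerical regimes of the statement to the corresponding regimes of this local model.

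In the local model I would compute the Zariski tangent space $T_{[Z]}\HH'\cong\Hom^G(I_Z/I_Z^2,\CC[Z])$ at the distinguished point fixed by the Levi action, using an explicit set of generators of $I_Z$ together with the decomposition of $\CC[W]$ into $G$-isotypic components, in the spirit of the proof of Proposition~\ref{pppfixe}. Matching $\dim T_{[Z]}$ with the expected dimension in each of the ranges $n\geq 2e-2$, $e>n=2$ or $e=n=4$, and $e>n=4$ would simultaneously establish that $\HH'$ is smooth and that it coincides with its main component, so that no spurious component appears. Finally I would identify the Hilbert-Chow morphism with the asserted blow-ups: the grading of $\CC[W]$ and the covariant morphisms to projective spaces endow $\HH'$ with tautological maps to $W/\!/G$ and to its loci of lower rank, and comparing these with the universal property of the blow-up of the determinantal cone of skew forms yields $W/\!/G=\Lambda^2(E^*)$ when $n\geq 2e-2$, $Y_0=Bl_0(\Lambda^2(E^*)^{\leq n})$ when $e>n=2$ or $e=n=4$, and the blow-up of $Y_0$ along the strict transform of $\Lambda^2(E^*)^{\leq 2}$ when $e>n=4$. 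The step I expect to be hardest is the tangent-space computation in the intermediate range $e>n=4$: there one must control the whole module $\Hom^G(I_Z/I_Z^2,\CC[Z])$, not merely its invariant part, at every point of the main component, and the determinantal bookkeeping needed to pin down the strict transform of $\Lambda^2(E^*)^{\leq 2}$ is the genuinely delicate point.
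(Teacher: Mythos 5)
First, a point of order: the paper contains no proof of Theorem \ref{Terpy2}. It is quoted verbatim from \cite[\S 1, Theorem]{Terp1}, so the only meaningful comparison is with the method of that earlier paper, whose architecture (first fundamental theorem identifying $W/\!/G$ with $\Lambda^2(E^*)^{\leq n}$, covariant morphisms to Grassmannians, a reduction principle, tangent-space computations at distinguished ideals, identification of the Hilbert--Chow morphism with blow-ups) is exactly what the present article mirrors in Sections \ref{MropRRED} and \ref{sssympSpn}. At that level your outline is the right one; however, one of your key steps rests on structure that simply does not exist in the setting of this theorem. Here $E$ is a bare vector space: it carries no quadratic form, and the group of $G$-equivariant symmetries of $W=\Hom(E,V)$ is $GL(E)$, not $SO(E)$. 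The form $q$, the group $H=SO(E)$, and the isotropic Grassmannians $\OG^I$, $\OG^{I\!I}$ belong to Section \ref{posisimpy2} of the present paper, where one works with the moment fibre $\mu^{-1}(0)\subset\Hom(E,V)$ and $E=V'\oplus V'^*$; none of this survives when one takes the full module $W$, which is the object of Theorem \ref{Terpy2}. Consequently your sentence ``the main component is carried onto the closed orbit, an isotropic Grassmannian'' cannot be executed: the covariant morphism of \cite{Terp1} (the analogue of Proposition \ref{moorppgr2}) lands in an ordinary Grassmannian of subspaces of $E$ (equivalently of $E^*$), on which $GL(E)$ acts transitively, so there is no proper closed orbit and no analogue of Lemma \ref{versX0}. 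The correct reduction, as in \cite{Terp1}, presents the main component as a $GL(E)$-homogeneous bundle over the full Grassmannian $\Gr(n,E^*)$, with fibre the main component of the invariant Hilbert scheme of the ``square'' module $\Hom(\overline{E},V)$, $\dim\overline{E}=n$; isotropy plays no role, and the substance of the theorem is then concentrated in this square case and in the flatness statement giving the case $n\geq 2e-2$.

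A second, smaller gap: your claim that matching tangent-space dimensions along the main component ``simultaneously establish[es] that $\HH'$ is smooth and that it coincides with its main component'' is incomplete. Such bounds exclude extra components passing \emph{through} points of the main component, but not components disjoint from it (and Proposition C of this very paper shows that, for $GL(V)$, such disjoint components do occur for the analogous Hilbert schemes). To rule them out one needs the degeneration argument of \cite{Terp1}: every ideal in $\HH'$ flows under the natural $\Gm$-action on $W$ to a fixed, i.e. homogeneous, ideal, one classifies the homogeneous ideals with Hilbert function $h_W$ and checks that they all lie on the main component, and only then do the tangent-space computations close the argument. With the reduction step corrected ($GL(E)$ and full Grassmannians in place of $SO(E)$ and isotropic ones) and this degeneration step added, your outline does match the proof strategy of the cited paper, including the identification of the three regimes with $\Lambda^2(E^*)$, $Y_0$, and the second blow-up via the universal property of blowing up.
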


If $m>n$, then we denote by $T$ the tautological bundle over $A_0=\OG(n,E^*)$. If $m \leq n$ and $m$ is even, then we denote by $T_{I}$ resp. by $T_{I\!I}$, the tautological bundle over $\OG^I$ resp. over $\OG^{I\!I}$. We deduce from Proposition \ref{reduction37} and Theorem \ref{Terpy2} the following $H$-equivariant isomorphisms
\begin{equation*}  
\HHp \cong \left\{
    \begin{array}{ll}
       \Lambda^2(T)         &\text{ if } m>n=2;\\
        Bl_0(\Lambda^2(T))  &\text{ if } m>n=4;
    \end{array}
\right.
\end{equation*}
\begin{equation}  \label{eqm2}
\HH_{\bullet}^{\mathrm{main}} \cong \left\{
    \begin{array}{ll}
       \Lambda^2(T_{\bullet})         &\text{ if } n \geq 2m-2 \text{ and $m$ is even};\\
        Bl_0(\Lambda^2(T_{\bullet}))  &\text{ if } m=n=4;
    \end{array}
\right.
\end{equation}
where ${\bullet}$ stands for $I$ or $I\!I$, and $Bl_0(.)$ denotes the blow-up along the zero section. In all these cases, the main component of the invariant Hilbert scheme is smooth, and thus the Hilbert-Chow morphism $\gamma:\ \HHp \rightarrow \mu^{-1}(0)/\!/G$ resp. $\gamma:\ \HH_{\bullet}^{\mathrm{main}} \rightarrow Y_{\bullet}$, is a desingularization.

It remains to compare $\gamma$ with the Springer desingularizations (when they exist) of the irreducible components of $\mu^{-1}(0)/\!/G$. We saw in Section \ref{gen} that the irreducible components of $\mu^{-1}(0)/\!/G$ have Springer desingularizations if and only if $m \leq n+1$. We then distinguish between the following cases:

\begin{enumerate}
\item If $m\leq n+1$ and $m$ is odd, then $\mu^{-1}(0)/\!/G$ admits two Springer desingularizations, which are given by the cotangent bundles $\TT_{I}^{*}$ and $\TT_{I\!I}^{*}$ over $\OG^{I}$ and $\OG^{I\!I}$ respectively. The natural action of the orthogonal group $O(E)$ on $\OG(m,E^*)$ induces an action on the cotangent bundle $\TT^* \OG(m,E^*)$ that exchanges  $\TT_{I}^{*}$ and $\TT_{I\!I}^{*}$. On the other hand, it follows from Remark \ref{veryuseful}, that the group $O(E)$ stabilizes $\HHp$, and thus $\gamma:\ \HHp \rightarrow \mu^{-1}(0)/\!/G$ cannot be a Springer desingularization. \\
However, if $n \in \{2,4\}$ and $m=n+1$, then one may prove that $\gamma$ dominates the two Springer desingularizations of $\mu^{-1}(0)/\!/G$ (see \cite[Introduction]{Tanja2} for the case $n=2$, the case $n=4$ being analogous).

\item If $m \leq n$ and $m$ is even, then $Y_{\bullet}$ has a unique Springer desingularization, which is given by the cotangent bundle $\TT_{\bullet}^{*} \cong \Lambda^2(T_{\bullet})$ over $\OG^{\bullet}$. Proceeding as we did for $GL(V)$ in Section \ref{proofGLn}, one may prove that $\gamma:\ \HH_{\bullet}^{\mathrm{main}} \rightarrow Y_{\bullet}$ is the Springer desingularization if and only if $n \geq 2m-2$.\\
In addition, if $m=n=4$, then by (\ref{eqm2}) we have $\HH_{\bullet}^{\mathrm{main}} \cong Bl_0(\TT_{\bullet}^{*})$, and thus $\gamma$ dominates the unique Springer desingularization of $Y_{\bullet}$.  
\end{enumerate}

\subsection{Study of the case \texorpdfstring{$n=2$}{n=2}} \label{cassympn2}

In this section, we suppose that $m \geq n=2$ (the case $m=1$ being trivial). We will prove that if $m \geq 3$ resp. if $m=2$, then $\HH$ resp. $\HH_{\bullet}$ (where $\bullet$ stands for $I$ or $I\!I$), is irreducible. In particular, the geometric properties of the invariant Hilbert scheme for $G=Sp(V)$ are quite different from the case of $G=GL(V)$ studied in Section \ref{posisimpy}. Let us recall that the case $m=3$, $n=2$ was treated by Becker in \cite{Tanja2}; she showed that $\HH$ is the total space of a line bundle over $\OG(2,E^*)$.

We have $G \cong Sp_2=SL_2$, and the morphism of Proposition \ref{moorppgr2} is $\rho:\ \Hilb_{h_0}^{G}(W) \rightarrow \Gr(2,E^*)$. Denoting $\hh^{\leq 2}:=\{ f \in \hh \ | \ \rk (f) \leq  2 \}$, there is a $H$-equivariant isomorphism 
\begin{equation}  \label{isomonn}
\PP(\hh^{\leq 2}) \cong \Gr(2,E^*),  
\end{equation}
and thus we can consider the morphism $\rho':\ \Hilb_{h_0}^{G}(W) \rightarrow \PP(\hh^{\leq 2})$ induced by $\rho$.
By Proposition \ref{descqu}, we have
$$\mu^{-1}(0)/\!/G= \left\{
    \begin{array}{ll}
           \overline{\OO_{[2^2,1^{2m-4}]}} & \text{ if } m \geq 3;\\
      \overline{\OO_{[2^2]}^{I}} \cup \overline{\OO_{[2^2]}^{I\!I}} & \text{ if } m=2.
     \end{array}
\right.$$

\begin{proposition}  \label{wxcv}
We equip all the invariant Hilbert schemes with their reduced structures. If $m>n=2$, then $\HH=\HHp$ is a smooth variety isomorphic to
$$Bl_0(\overline{\OO_{[2^2,1^{2m-4}]}}):=\left \{(f,L) \in \overline{\OO_{[2^2,1^{2m-4}]}} \times \PP (\overline{\OO_{[2^2,1^{2m-4}]}}) \ \mid  \ f \in L  \right \},$$ 
and the Hilbert-Chow morphism $\gamma:\ \HH \rightarrow \mu^{-1}(0)/\!/G$ is the blow-up of $\overline{\OO_{[2^2,1^{2m-4}]}}$ at $0$.
If $m=n=2$, then $\Hilb_{h_0}^{G}(\mu^{-1}(0))=\HHx \cup \HHy$ is the union of two smooth irreducible components isomorphic to $Bl_0(\overline{\OO_{[2^2]}^{I}})$ and $Bl_0(\overline{\OO_{[2^2]}^{I\!I}})$ respectively, and the set-theoretic intersection $\HHx \cap \HHy$ is formed by the homogeneous ideals of $\CC[\mu^{-1}(0)]$. Moreover, the Hilbert-Chow morphism $\gamma:\ \HHx \rightarrow \overline{\OO_{[2^2]}^{I}}$ resp. $\gamma:\ \HHy \rightarrow \overline{\OO_{[2^2]}^{I\!I}}$, is the blow-up of $\overline{\OO_{[2^2]}^{I}}$ resp. of $\overline{\OO_{[2^2]}^{I\!I}}$, at $0$. 
\end{proposition}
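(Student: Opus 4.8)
The plan is to adapt, almost verbatim, the proof of Proposition \ref{casSympn1}, replacing the rank-one stratification used there for $GL(V)$ by the rank-two one appropriate to $Sp(V)$, and to locate the image of $\HH$ inside an explicit incidence variety. Since $m \geq n = 2$, we are in the case $\dim E > \dim V = 2$ of Theorem \ref{Terpy2}, and the Hilbert function $h_0$ of Corollary \ref{fctHSp2} (which is $M \mapsto \dim M$ for $m \geq n$) coincides with the Hilbert function $h_W$ of the general fibers of $W \rightarrow W/\!/G$. Theorem \ref{Terpy2}, combined with the isomorphism (\ref{isomonn}), then provides a $H$-equivariant isomorphism
\[
\gamma \times \rho':\ \Hilb_{h_0}^{G}(W) \xrightarrow{\ \sim\ } Bl_0(\hh^{\leq 2}) = \left\{(f,L) \in \hh^{\leq 2} \times \PP(\hh^{\leq 2}) \ \middle|\ f \in L \right\},
\]
where I read $\rho'$ through (\ref{isomonn}) as landing in $\PP(\hh^{\leq 2}) \cong \Gr(2,E^*)$. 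Restricting along the closed embedding $\HH \hookrightarrow \Hilb_{h_0}^G(W)$ and using $\gamma(\HH) \subseteq \mu^{-1}(0)/\!/G$ (Proposition \ref{descqu}) yields a closed embedding
\[
\gamma \times \rho':\ \HH \hookrightarrow \YY := \left\{(f,L) \in \overline{\OO} \times \PP(\hh^{\leq 2}) \ \middle|\ f \in L \right\},
\]
with $\overline{\OO}$ the relevant orbit closure(s) from Proposition \ref{descqu}.

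Next I would decompose $\YY$. As $\overline{\OO}$ is a cone, a line through the origin either lies in $\overline{\OO}$ or meets it only at $0$, and via (\ref{isomonn}) the former occurs exactly when the $2$-plane $L$ is $q$-isotropic, i.e. $L \in \OG(2,E^*) = \PP(\overline{\OO})$. Hence $\YY = C_1 \cup C_2$ where $C_1 := \{(f,L)\ |\ L \in \OG(2,E^*),\ f \in L\} \cong Bl_0(\overline{\OO})$ and $C_2 := \{(0,L)\ |\ L \in \PP(\hh^{\leq 2})\} \cong \Gr(2,E^*)$, the two pieces meeting along the zero-section $\OG(2,E^*)$. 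A dimension count gives $\dim C_1 = \dim\overline{\OO}$ while $\dim C_2 = \dim\Gr(2,E^*)$ is \emph{strictly larger}. This is the decisive structural difference with the $GL(V)$, $n=1$ situation of Proposition \ref{casSympn1}, where the two pieces had equal dimension and both genuinely occurred in $\HH$; here $C_2$ cannot occur without contradicting the asserted dimension $\dim\HHp$, so the whole content of the proposition is to exclude it.

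The heart of the argument, and the step where the real work lies, is thus to prove that $\gamma \times \rho'$ sends $\HH$ into $C_1$; equivalently, that every homogeneous ideal occurring in $\HH$ (the locus $\gamma = 0$) has $q$-isotropic image plane. I would argue as follows. Such a point corresponds to a $G$-stable homogeneous ideal $J \subseteq \CC[\mu^{-1}(0)]$ with Hilbert function $h_0$; writing $W^* \cong E \otimes V$ as a $G$-module (with $G$ acting trivially on $E$), its degree-one part must be $L' \otimes V$ for a codimension-two subspace $L' = \Ker(f_Z) \subseteq E$, and the sought $2$-plane is $L = (L')^{\perp_q}$. The inclusion $Z \subseteq \mu^{-1}(0)$ forces the moment-map generators into $J$; these form the copy $\gg \otimes \langle q\rangle \subseteq S^2 V \otimes S^2 E \subseteq S^2 W^*$ of the adjoint representation twisted by the $SO(E)$-invariant form $q$. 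Projecting onto $S^2(\CC[Z]_1) = S^2(E/L') \otimes \gg \oplus \Lambda^2(E/L')$ carries this copy to $\gg \otimes \bar{q}$, where $\bar{q}$ is the image of $q$ in $S^2(E/L')$, and $\bar{q} = 0$ says precisely that $q|_{L}$ vanishes, i.e. that $L$ is isotropic. I would then force $\bar{q} = 0$ by combining the Hilbert-function bound $h_0(\gg) = \dim\gg$ with a comparison to the generic fiber, where $Z$ is a $G$-orbit closure and $L = \Im(\leftexp{t}{w})$ is automatically isotropic; equivalently, one checks directly that the subscheme of $W$ attached to a point $(0,L) \in C_2$ lies in $\mu^{-1}(0)$ if and only if $L$ is isotropic. (Alternatively, as Becker did for $m=3$, one may establish the same conclusion through a tangent-space computation showing $\HH$ smooth of dimension $\dim\HHp$ at every point of the main component.)

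Granting this, the rest is formal. For $m > n = 2$, the reduction principle (Proposition \ref{reduction37}) together with (\ref{eqm2}) identifies $\HHp$ with $\Lambda^2 T = Bl_0(\overline{\OO}) = C_1$; since $\HHp \subseteq \HH \subseteq C_1$ and the outer terms coincide, $\HH = \HHp = C_1$ is smooth and isomorphic to $Bl_0(\overline{\OO_{[2^2,1^{2m-4}]}})$, with $\gamma$ the projection to $\overline{\OO}$, that is, the blow-up at $0$. For $m = n = 2$, Proposition \ref{descqu} splits $\overline{\OO}$ into $\overline{\OO_{[2^2]}^{I}} \cup \overline{\OO_{[2^2]}^{I\!I}}$ and $\OG(2,E^*) = \OG^{I} \sqcup \OG^{I\!I}$; running the same reduction on $X_I$ and $X_{I\!I}$ gives $\HHxp \cong Bl_0(\overline{\OO_{[2^2]}^{I}})$ and $\HHyp \cong Bl_0(\overline{\OO_{[2^2]}^{I\!I}})$, and the containment of $\HH$ in the corresponding $C_1^{I} \cup C_1^{I\!I}$ forces $\HH = \HHx \cup \HHy$ to be the union of these two smooth components, each mapped isomorphically by $\gamma \times \rho'$ and each with Hilbert--Chow morphism the blow-up at $0$. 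Finally I would identify the set-theoretic intersection $\HHx \cap \HHy$ with the homogeneous ideals by a direct inspection of the $G$-stable subschemes supported on $X_I \cap X_{I\!I}$.
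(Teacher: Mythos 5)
Your proposal is correct and follows essentially the same route as the paper's proof: the closed embedding $\gamma \times \rho'$ of $\HH$ into the incidence variety $\YY$ obtained from Theorem \ref{Terpy2}, the decomposition $\YY = C_1 \cup C_2$, the identification of the main component with $C_1$, and the exclusion of the non-isotropic part of $C_2$ by checking that the homogeneous ideal sitting over a point $(0,L)$ defines a subscheme of $\mu^{-1}(0)$ if and only if ${q}_{|L}=0$ --- which is exactly the computation the paper carries out via \cite[Proposition 3.3.13]{Terp}, by pinning that ideal down as $I_L$ (generated by the positive-degree invariants and $L^{\perp}\otimes V$) and verifying its Hilbert function, i.e.\ the rigorous form of your ``direct check,'' rendering the projection-of-moment-map-generators argument and the Hilbert-function hedge unnecessary. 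Your handling of $m=n=2$ (running the same argument on $X_I$ and $X_{I\!I}$ and identifying $\HHx \cap \HHy$ with the homogeneous ideals supported on $X_I \cap X_{I\!I}$) likewise matches the paper, which treats that case as ``quite similar'' to $m \geq 3$.
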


\begin{proof}
The proofs for the cases $m=2$ and $m \geq 3$ are quite similar, and thus we will only consider the case $m \geq 3$ (which is simpler in terms of notation!). 
Using arguments similar to those used to prove Proposition \ref{casSympn1}, we obtain a closed embedding
$$\gamma \times \rho':\ \HH \hookrightarrow \YY:=\left \{(f,L) \in \overline{\OO_{[2^2,1^{2m-4}]}} \times \PP ({\hh}^{\leq 2}) \ \mid  \ f \in L  \right \}.$$ 
One may check that $\YY$ is the union of the two irreducible components $C_1$ and $C_2$ defined by: 
\begin{align*}
\bullet \ C_1&:=Bl_0(\overline{\OO_{[2^2,1^{2m-4}]}}); and \\
\bullet \ C_2&:=\left \{(0,L) \in \overline{\OO_{[2^2,1^{2m-4}]}} \times \PP ({\hh}^{\leq 2}) \right \} \cong \PP ({\hh}^{\leq 2}).
\end{align*}
The components $C_1$ and $C_2$ are of dimension $4m-6$ and $4m-4$ respectively. The morphism $\gamma \times \rho'$ sends $\HHp$ into $C_1$; the varieties $\HHp$ and $C_1$ have the same dimension, hence $\gamma \times \rho':\ \HHp \rightarrow C_1$ is an isomorphism.\\ 
Now it follows from \cite[Proposition 3.3.13]{Terp} that the component $C_2$ identifies with the closed subset of $\Hilb_{h_0}^{G}(W)$ formed by the homogeneous ideals of $\CC[W]$. Let us describe this identification. If $L \in C_2 \cong \PP ({\hh}^{\leq 2})$, then we denote by $I_L$ the ideal of $\CC[W]$ generated by the homogeneous $G$-invariants of positive degree of $\CC[W]$, and by the $G$-module $L^{\perp} \otimes V \subset \CC[W]_1 \cong E \otimes V$, where $L$ is identified with a 2-dimensional subspace of $E^*$ via the isomorphism (\ref{isomonn}). Let us show that $I_L$ is a point of $\HH$ if and only if $L \in \OG(2,E^*)$; the result will follow since $\PP (\overline{\OO_{[2^2,1^{2m-4}]}})$ identifies with $\OG(2,E^*)$ via the isomorphism (\ref{isomonn}), and since $\left \{(0,L) \in \overline{\OO_{[2^2,1^{2m-4}]}} \times \PP (\overline{\OO_{[2^2,1^{2m-4}]}})  \right\}$ is a subvariety of $C_1$.\\     
We denote $W':=\Hom(E/L^{\perp},V)$, then
$$\CC[W']_2 \cong (S^2(E/L^{\perp}) \otimes S^2(V)) \oplus (\Lambda^2(E/L^{\perp}) \otimes \Lambda^2(V))$$ 
as a $G$-module. Let $I'_L$ be the ideal of $\CC[W']$ generated by $\Lambda^2(E/L^{\perp}) \otimes \Lambda^2(V) \subset \CC[W']_2$, then one may check (using \cite[Proposition 3.3.13]{Terp}) that 
$$\CC[W]/I_L \cong \CC[W']/I'_L \cong \bigoplus_{M \in \Irr(G)}  M^{\oplus \dim(M)}$$ 
as a $G$-module. Hence
\begin{align*}
I_L \in \HH & \Leftrightarrow I_L \cap \CC[W]_2 \supset E_0 \otimes S^2(V), \text{ where $E_0$ is the trivial representation of $H$;} \\
             &  \Leftrightarrow  {q}_{|L}=0, \text{ where $q$ is the quadratic form preserved by $H$;} \\
             & \Leftrightarrow L \in \OG(2,E^*).
\end{align*} 
\end{proof}

\begin{remarque}
In the proof of Proposition \ref{wxcv}, we showed that if $m > n=2$, then the homogeneous ideals of $\HH$ are contained in $\HHp$. Using analogous arguments, one may check that this statement is true more generally when  $m > n \geq 2$. \\
\end{remarque}

\noindent \textbf{Acknowledgments:} I am deeply thankful to Michel Brion for proposing this subject to me, for a lot of helpful discussions, and for his patience. I thank Tanja Becker for exchange of knowledge on invariant Hilbert schemes by e-mail and during her stay in Grenoble in October 2010. I also thank Bart Van Steirteghem for helpful discussions during his stay in Grenoble in Summer 2011. \\

\noindent \texttt{Université Grenoble I, Institut Fourier,}\\
\texttt{UMR 5582 CNRS-UJF, BP 74,}\\
\texttt{38402 St. Martin d'Hères Cédex, FRANCE}\\
\textit{E-mail address:} \texttt{ronan.terpereau@ujf-grenoble.fr}

\end{document}